\documentclass[11pt,reqno]{amsart}

\usepackage[T1]{fontenc}
\usepackage[utf8]{inputenc}
\usepackage{lmodern}
\usepackage[british]{babel}
\usepackage{mathtools,amssymb,amsthm}
\usepackage{array}
\usepackage{microtype}
\usepackage[hidelinks]{hyperref}
\hypersetup{
  pdftitle={Sharp Wasserstein stability and small-delay thresholds for evolution equations with measure-valued principal-part delays},
  pdfauthor={Hiroki Ishizaka},
  pdfkeywords={measure-valued delay, principal-part delay, Wasserstein stability, solution-level sharpness, small-delay instability, kernel quadrature}
}
\usepackage[nameinlink,capitalize,noabbrev]{cleveref}

\numberwithin{equation}{section}

\theoremstyle{plain}
\newtheorem{theorem}{Theorem}[section]
\newtheorem{lemma}[theorem]{Lemma}
\newtheorem{proposition}[theorem]{Proposition}
\newtheorem{corollary}[theorem]{Corollary}

\theoremstyle{definition}

\theoremstyle{remark}
\newtheorem{remark}[theorem]{Remark}

\crefname{theorem}{theorem}{theorems}
\Crefname{theorem}{Theorem}{Theorems}
\crefname{lemma}{lemma}{lemmas}
\Crefname{lemma}{Lemma}{Lemmas}
\crefname{proposition}{proposition}{propositions}
\Crefname{proposition}{Proposition}{Propositions}
\crefname{corollary}{corollary}{corollaries}
\Crefname{corollary}{Corollary}{Corollaries}
\crefname{definition}{definition}{definitions}
\Crefname{definition}{Definition}{Definitions}
\crefname{assumption}{assumption}{assumptions}
\Crefname{assumption}{Assumption}{Assumptions}
\crefname{remark}{remark}{remarks}
\Crefname{remark}{Remark}{Remarks}
\crefname{example}{example}{examples}
\Crefname{example}{Example}{Examples}

\newcommand{\R}{\mathbb{R}}
\newcommand{\M}{\mathcal{M}}
\newcommand{\W}{\mathcal{W}}
\newcommand{\K}{\mathcal{K}}

\newcommand{\dd}{\,\mathrm{d}}
\newcommand{\Tend}{\mathfrak{T}}
\newcommand{\norm}[1]{\left\lVert #1 \right\rVert}
\newcommand{\abs}[1]{\left\lvert #1 \right\rvert}
\newcommand{\pair}[2]{\left\langle #1,#2 \right\rangle}

\title[Sharp Wasserstein stability for principal-part delays]
{Sharp Wasserstein stability and small-delay thresholds for evolution equations with measure-valued principal-part delays}
\thanks{Affiliation: Team FEM, Matsuyama, Japan.
Email: \texttt{h.ishizaka005@gmail.com}.}
\thanks{The weighted well-posedness and qualitative kernel-stability results recalled in \Cref{sec:framework} are established in the companion article~\cite{IshizakaMixed2026}, of which the present quantitative and sharpness theory is a development.}

\author{Hiroki Ishizaka}

\subjclass[2020]{34K30, 45K05, 35B35, 49Q22}
\keywords{measure-valued delay, principal-part delay, Wasserstein stability, solution-level sharpness, small-delay instability, kernel quadrature}

\begin{document}
\raggedbottom

\begin{abstract}
We study the dependence of solutions to coercive evolution equations on the law of a measure-valued delay acting through the principal spatial operator.  For non-negative delay measures of equal mass, transport couplings and the time-translation modulus of one reference trajectory yield quantitative kernel-stability estimates.  Fractional temporal regularity gives a Wasserstein rate, with explicit glueing and data criteria for its application.  For moving point delays in a diffusion model, a solution-level lower bound matches both the rate and its dependence on the kernel mass.  This matching estimate holds for zero prescribed history and uniformly bounded forcing in the natural dual energy space.  The results also quantify concentration to positive delays and approximation by atomic quadrature.  Separately, we determine a sharp class-level threshold for energy bounds uniform in the delay location.  At the critical threshold, any fixed scalar mode is spectrally stable, but uniform space--time energy control fails in a small-delay, high-frequency limit.  Above the threshold, the growth has an explicit asymptotic logarithmic rate.
\end{abstract}

\maketitle

\section{Introduction}\label{sec:introduction}
Let $\Tend>0$ and $\tau>0$.  On a Gelfand triple $V\hookrightarrow H\cong H^*\hookrightarrow V^*$, we consider
\begin{align}\label{eq:intro-problem}
  \partial_tu+A_0u+\int_{[0,\tau]}A_1\tilde u(t-s)\,\dd\mu(s)=f,
  \quad u(0)=u_0,
\end{align}
where $A_0,A_1:V\to V^*$ are induced by bounded forms, the instantaneous form is coercive, and $\tilde u$ joins the solution to its prescribed history.  The measure $\mu$ covers distributed memory and discrete delays~\cite{HaleVerduynLunel1993,BatkaiPiazzera2005}.  We study the change in the solution when the delay measure is perturbed.

The companion paper~\cite{IshizakaMixed2026} establishes weighted well-posedness and qualitative kernel stability, but not rates in a transport metric.  For non-negative delay measures $\mu$ and $\nu$ of common mass $M$, we combine their residual estimate with a transport coupling.  The resulting bound depends on the time-translation modulus of one reference trajectory (\Cref{thm:transport-modulus}).  When the joined trajectory extends to $G\in H^\theta(\R;V)$ for some $0<\theta\le1$, \Cref{thm:wasserstein-rate} gives
\begin{align*}
  \norm{u_\mu-u_\nu}_{L^2(0,\Tend;V)}
  \le C\,M^{1-\theta}W_1(\mu,\nu)^\theta\norm{G}_{H^\theta(\R;V)},
\end{align*}
where $W_1$ denotes the equal-mass $1$-Wasserstein transport cost.  We give conditions on the history and data that ensure the required regularity (\Cref{prop:Htheta-gluing,prop:data-H1-criterion}).  This combines transport estimates~\cite{AmbrosioGigliSavare2008,Villani2009} with fractional time-translation bounds~\cite{DiNezzaPalatucciValdinoci2012}.  For moving point delays in a diffusion model, the matching lower bound holds for actual solution differences, with zero prescribed history and uniformly bounded forcing (\Cref{thm:solution-sharpness} and \Cref{cor:two-sided-modulus}).  It proves optimality of both the exponent and the mass dependence within the stated class.

Delay dependence for parabolic equations has been considered under weak-$*$ assumptions~\cite{KryspinMierczynski2023,KryspinMierczynski2024}, for measure-valued delay controls~\cite{CasasMateosTroltzsch2018}, and for reaction--diffusion and nonlinear diffusion equations~\cite{Shikhman2026,Tsukamoto2026}.  Of particular relevance is the sharp state-delay result of Shikhman~\cite{Shikhman2026b}, which gives a logarithmically corrected modulus for fixed-time errors in $H$, as well as bounds for quadrature and regular reference paths~\cite[Remark~6.4]{Shikhman2026b}. Here, the delay acts through a principal-order operator, and errors are measured in $L^2(0,\Tend;V)$.  In this energy space, there is no uniform vanishing modulus over the full $L^2$ reference class; the rate is instead determined by positive temporal regularity.  The distinction is not merely the choice of metric: on a bounded delay interval, $W_1$ and the bounded-Lipschitz distance are equivalent for equal-mass measures.

We also examine estimates uniform in the delay location as delays tend to zero.  With $\alpha_0$ denoting the instantaneous coercivity constant and $\Lambda_1$ the bound for the delayed form, the ratio $\Lambda_1M/\alpha_0$ has the critical value one.  Below it, the energy bound is uniform; at equality, each fixed scalar mode is spectrally stable, but the uniform space--time estimate fails along a small-delay, high-frequency sequence.  Above it, for a positive discrete lag $\tau_d$, the growth rate is asymptotic to $\log(\Lambda_1M/\alpha_0)/\tau_d$ (\Cref{prop:subcritical-mass-bound} and \Cref{thm:small-delay-instability}).  Small-delay destabilisation is known in feedback theory~\cite{Datko1988,LogemannRebarberWeiss1996} and highest-order delayed parabolic equations~\cite{YongPan1993}.  Our focus is the uniform finite-time energy bound, including its critical case.

The transport estimate also bounds the effect of replacing a delay measure by atomic quadrature, separately from the discretisation error for the state.  Earlier numerical work includes averaging schemes~\cite{BanksBurns1978,BanksBurnsCliff1981}, delay discretisations~\cite{BellenZennaro2003,Pao1999}, and methods for Volterra equations~\cite{Lubich1988,LubichOstermann1993,McLeanThomee1993,Thomee2006}.  Mesh-uniform kernel estimates would additionally require uniform time regularity of the semidiscrete reference solutions; this is not implied by discrete coercivity and is left to future work.

\Cref{sec:framework} recalls the framework, \Cref{sec:rates} proves the transport estimates, \Cref{sec:small-delay} analyses the small-delay threshold, and \Cref{sec:sharpness} establishes solution-level optimality.

\section{Framework and residual stability}\label{sec:framework}
The functional framework and the well-posedness, residual, and narrow kernel-stability results in this section are established in the companion article~\cite{IshizakaMixed2026}.  We recall their statements, without proof, because they provide the tools used in \Cref{sec:rates,sec:small-delay,sec:sharpness}.

Let $V$ and $H$ be real separable Hilbert spaces with $V\hookrightarrow H$ continuously and densely.  We identify $H$ with its dual and use the Gelfand triple
\begin{align*}
  V\hookrightarrow H\cong H^*\hookrightarrow V^*.
\end{align*}
Let $a_0,a_1:V\times V\to\R$ be bilinear forms satisfying
\begin{align}
|a_i(w,v)|&\leq \Lambda_i\norm{w}_V\norm{v}_V, \quad i\in\{0,1\},\label{eq:form-bounded}\\
a_0(v,v)&\geq \alpha_0\norm{v}_V^2,\label{eq:a0-coercive}
\end{align}
for constants $\Lambda_0,\Lambda_1,\alpha_0>0$.  We write $A_i\in\mathcal L(V,V^*)$ for the induced operators.

Let $\M([0,\tau])$ be the finite signed Borel measures with total-variation norm, and set
\begin{align*}
\M^0([0,\tau])&:=\{\sigma\in\M([0,\tau]):|\sigma|(\{0\})=0\},\\
\M_+^0([0,\tau])&:=\M^0([0,\tau])\cap\M_+([0,\tau]).
\end{align*}
For $\beta\ge0$, we define
\begin{align}
  \norm{g}_{L^2_\beta(I;X)}
  &:=\left(\int_Ie^{-2\beta t}\norm{g(t)}_X^2\,\dd t\right)^{1/2},\label{eq:weighted-norm}\\
  \kappa_\sigma(\beta)
  &:=\int_{[0,\tau]}e^{-\beta s}\,\dd|\sigma|(s).\label{eq:kappa}
\end{align}

For $u\in L^2(0,\Tend;V)$ and a prescribed history $\psi\in L^2(-\tau,0;V)$, let
\begin{align}\label{eq:joined-trajectory}
  \tilde u(t):=
  \begin{cases}
    \psi(t),&-\tau<t<0,\\
    u(t),&0<t<\Tend.
  \end{cases}
\end{align}
We use
\begin{align*}
  \W(0,\Tend)
  :=\{u\in L^2(0,\Tend;V):\partial_tu\in L^2(0,\Tend;V^*)\},
\end{align*}
equipped with
\begin{align*}
\norm{u}_{\W(0,\Tend)}
  &:=
  \left(
    \norm{u}_{L^2(0,\Tend;V)}^2
    +
    \norm{\partial_tu}_{L^2(0,\Tend;V^*)}^2
  \right)^{1/2}.
\end{align*}
Then, $\W(0,\Tend)\hookrightarrow\mathcal C([0,\Tend];H)$.

\begin{lemma}[Measure-valued history operator]\label[lemma]{lem:measure-convolution}
For $\sigma\in\M([0,\tau])$ and $g\in L^2(-\tau,\Tend;V)$, extended by zero to $\R$, the formula
\begin{align*}
  (\K_\sigma g)(t):=\int_{[0,\tau]}A_1g(t-s)\,\dd\sigma(s)
\end{align*}
defines an element of $L^2(0,\Tend;V^*)$, independent of the chosen strongly measurable representative, and
\begin{align}\label{eq:measure-convolution-bound}
  \norm{\K_\sigma g}_{L^2(0,\Tend;V^*)}
  \leq \Lambda_1\norm{\sigma}_{\M([0,\tau])}
       \norm{g}_{L^2(-\tau,\Tend;V)}.
\end{align}
The same notation is used for $G\in L^2(\R;V)$.
\end{lemma}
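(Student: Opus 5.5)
The plan is to define $\K_\sigma g$ by duality against test functions, which settles measurability and the choice of representative at once, and then to obtain the bound from a Minkowski-type argument. Since $A_1\in\mathcal L(V,V^*)$ with $\norm{A_1}\le\Lambda_1$ by \eqref{eq:form-bounded}, I would start from a strongly measurable representative $g:\R\to V$ (extended by zero) and consider the map $(t,s)\mapsto A_1g(t-s)$ on $(0,\Tend)\times[0,\tau]$. This map is strongly measurable with respect to the product of Lebesgue measure and $|\sigma|$: composing with the continuous linear $A_1$ and the measurable map $(t,s)\mapsto t-s$ preserves strong measurability, because simple functions approximating $g$ pull back to simple functions on the product.

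The central estimate comes next. By Tonelli,
\[
\int_{[0,\tau]}\int_0^{\Tend}\norm{g(t-s)}_V^2\,\dd t\,\dd|\sigma|(s)\le\norm{\sigma}_{\M([0,\tau])}\norm{g}^2_{L^2(-\tau,\Tend;V)},
\]
since $t-s\in(-\tau,\Tend)$ and $g$ vanishes outside $(-\tau,\Tend)$. Combining this with Cauchy--Schwarz against the finite measure $|\sigma|$, namely
\[
\Bigl(\int\norm{g(t-s)}_V\,\dd|\sigma|(s)\Bigr)^2\le\norm{\sigma}\int\norm{g(t-s)}_V^2\,\dd|\sigma|(s),
\]
shows that for a.e.\ $t$ the function $s\mapsto A_1g(t-s)$ is Bochner integrable in $V^*$ with respect to $|\sigma|$. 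The Bochner integral against $\dd\sigma=h\,\dd|\sigma|$, where $|h|=1$, is therefore defined for a.e.\ $t$. Integrating the previous inequality in $t$ then gives
\[
\int_0^{\Tend}\norm{(\K_\sigma g)(t)}_{V^*}^2\,\dd t\le\Lambda_1^2\norm{\sigma}^2\norm{g}^2_{L^2(-\tau,\Tend;V)},
\]
which is \eqref{eq:measure-convolution-bound}. Strong measurability of $t\mapsto(\K_\sigma g)(t)$ follows from the Fubini theorem for Bochner integrals.

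The step I expect to need the most care is independence of the representative, because $\sigma$ may have atoms (for example $\sigma=\delta_{s_0}$). In that case $g(t-s_0)$ is sampled along a single translate, so one cannot argue that $g$ and $g'$ agree $|\sigma|$-a.e.\ in $s$ for a fixed $t$. The fix is to apply the same Tonelli computation to $g-g'$, which vanishes a.e.\ on $\R$. It gives $\int\int_0^{\Tend}\norm{(g-g')(t-s)}_V\,\dd t\,\dd|\sigma|(s)=0$, since each inner integral is zero by translation invariance of Lebesgue measure. Hence $(g-g')(t-s)=0$ for $\mathrm{Leb}\otimes|\sigma|$-a.e.\ $(t,s)$, so for a.e.\ $t$ it vanishes for $|\sigma|$-a.e.\ $s$, and the two definitions of $\K_\sigma$ agree a.e.\ in $t$. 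The same argument applies verbatim to $G\in L^2(\R;V)$; there one uses $\norm{G}_{L^2(-\tau,\Tend;V)}\le\norm{G}_{L^2(\R;V)}$, and the bound holds with the restriction of $G$ to $(-\tau,\Tend)$.
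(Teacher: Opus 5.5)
The paper gives no in-text proof of this lemma; it defers to Lemma~2.3 of the companion article, so there is nothing here to match line by line. Your argument is correct. Tonelli for $\norm{g(t-s)}_V^2$ on $(0,\Tend)\times[0,\tau]$, Cauchy--Schwarz in $s$ against $\abs{\sigma}$, and $\norm{A_1}_{\mathcal L(V,V^*)}\le\Lambda_1$ give exactly the constant $\Lambda_1\norm{\sigma}_{\M([0,\tau])}$. For representative independence, showing that $(g-g')(t-s)=0$ for $\mathrm{Leb}\otimes\abs{\sigma}$-a.e.\ $(t,s)$, rather than for fixed $t$, is exactly what is needed when $\sigma$ has atoms.

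Two presentational points. First, your opening sentence announces a definition by duality against test functions, but what you carry out is a pointwise-in-$t$ Bochner integral; the duality plays no role. Second, the claim that ``simple functions pull back to simple functions'' tacitly uses that $(t,s)\mapsto t-s$ pulls Lebesgue-null sets back to $\mathrm{Leb}\otimes\abs{\sigma}$-null sets. This is needed both for measurability of preimages in the completed product and for preserving a.e.\ convergence. It follows from the same translation-invariance computation you use for representatives. Alternatively, take a Borel representative, which exists because $V$ is separable.

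The route the paper itself uses later, in the proof of \Cref{thm:transport-modulus}, is different. It regards $F(s):=A_1g(\cdot-s)|_{(0,\Tend)}$ as a map $[0,\tau]\to Y:=L^2(0,\Tend;V^*)$, which is continuous by strong continuity of translations on $L^2(\R;V)$. It then writes $\K_\sigma g=\int_{[0,\tau]}F(s)\,\dd\sigma(s)$ as a $Y$-valued Bochner integral. Minkowski's inequality gives the bound in one line. Independence of the representative is automatic, since each $F(s)$ depends only on the equivalence class of $g$. The price is identifying this $Y$-valued integral with the pointwise formula for a.e.\ $t$, via scalar Fubini after pairing with $\varphi v$, $\varphi\in L^2(0,\Tend)$, $v\in V$; this is probably where your ``duality'' naturally belongs. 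Your pointwise route proves the formula as literally stated and handles atoms explicitly. The $Y$-valued route is the form that makes identities such as \eqref{eq:transport-residual-identity} immediate.
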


\begin{proof}
For the proof, see~\cite[Lemma~2.3]{IshizakaMixed2026}.
\end{proof}

We consider
\begin{align}\label{eq:problem}
  \partial_tu+A_0u+\K_\mu\tilde u=f
  \quad\text{in }V^*\text{ for a.e. }t\in(0,\Tend),
  \quad u(0)=u_0\text{ in }H,
\end{align}
with $u_0\in H$, $f\in L^2(0,\Tend;V^*)$, $\psi\in L^2(-\tau,0;V)$, and $\mu\in\M^0([0,\tau])$.

\begin{remark}[An atom at the origin]\label[remark]{rem:atom-zero}
If $\hat\mu=m_0\delta_0+\mu$ with $|\mu|(\{0\})=0$, then $\K_{\hat\mu}\tilde u=m_0A_1u+\K_\mu\tilde u$.  Thus, the atom at zero must be absorbed into the instantaneous form $a_0+m_0a_1$, provided the resulting form remains coercive.
\end{remark}

\begin{lemma}[Weighted norm equivalence]
\label[lemma]{lem:weighted-equivalence}
Let $X$ be a Hilbert space and let $\beta \geq 0$. For any strongly measurable function $g:(0,\Tend)\to X$, one has
\begin{align*}
  g\in L^2_\beta(0,\Tend;X)
  \quad\Longleftrightarrow\quad
  g\in L^2(0,\Tend;X).
\end{align*}
Furthermore,
\begin{align}
  e^{-\beta\Tend}\norm{g}_{L^2(0,\Tend;X)}
  \leq
  \norm{g}_{L^2_\beta(0,\Tend;X)}
  \leq
  \norm{g}_{L^2(0,\Tend;X)}.
  \label{eq:weighted-norm-equivalence}
\end{align}
\end{lemma}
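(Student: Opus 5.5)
The plan is to work pointwise in time and integrate, since both inequalities follow from elementary bounds on the weight $e^{-2\beta t}$ over the bounded interval $(0,\Tend)$.

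First, strong measurability of $g$ implies that $t\mapsto\norm{g(t)}_X$ is Lebesgue measurable. Hence both
\begin{align*}
  \int_0^{\Tend}e^{-2\beta t}\norm{g(t)}_X^2\dd t
  \quad\text{and}\quad
  \int_0^{\Tend}\norm{g(t)}_X^2\dd t
\end{align*}
are well-defined elements of $[0,\infty]$, and it makes sense to compare them directly.

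Second, since $\beta\ge0$ and $0<t<\Tend$, the weight satisfies
\begin{align*}
  e^{-2\beta\Tend}\le e^{-2\beta t}\le 1 .
\end{align*}
Multiplying by $\norm{g(t)}_X^2\ge0$ and integrating over $(0,\Tend)$, using monotonicity of the integral for non-negative measurable functions, gives
\begin{align*}
  e^{-2\beta\Tend}\norm{g}_{L^2(0,\Tend;X)}^2
  \le\norm{g}_{L^2_\beta(0,\Tend;X)}^2
  \le\norm{g}_{L^2(0,\Tend;X)}^2
\end{align*}
as an inequality in $[0,\infty]$. Taking square roots yields \eqref{eq:weighted-norm-equivalence}.

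Finally, the two-sided bound shows that one quantity is finite if and only if the other is, because $e^{-\beta\Tend}>0$. This is exactly the claimed equivalence $g\in L^2_\beta\Leftrightarrow g\in L^2$. There is no real obstacle here; the only point requiring care is to argue in $[0,\infty]$ before finiteness is known, so that the equivalence is not assumed in advance.
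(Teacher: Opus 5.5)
Your proof is correct and is essentially the paper's argument: integrate the pointwise bound $e^{-2\beta\Tend}\le e^{-2\beta t}\le 1$ on $(0,\Tend)$ against $\norm{g(t)}_X^2$. Your point about working in $[0,\infty]$ before finiteness is known, which the paper leaves implicit, is the right way to obtain the membership equivalence without circularity.
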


\begin{proof}
This follows from $e^{-2\beta\Tend}\le e^{-2\beta t}\le1$ for $0<t<\Tend$; see~\cite[Lemma~3.1]{IshizakaMixed2026}.
\end{proof}

\begin{theorem}[Weighted well-posedness]\label[theorem]{thm:wellposedness}
Let $\mu\in\M^0([0,\tau])$ and choose $\beta\ge0$ such that
\begin{align*}
  q_{\mu,\beta}:=\frac{\Lambda_1\kappa_\mu(\beta)}{\alpha_0}<1.
\end{align*}
Then, \eqref{eq:problem} has a unique solution $u_\mu\in\W(0,\Tend)$, and
\begin{align}
\norm{u_\mu}_{L^2_\beta(0,\Tend;V)}
&\leq \frac{1}{1-q_{\mu,\beta}} \Bigl[\alpha_0^{-1/2}\norm{u_0}_H +\alpha_0^{-1}\norm{f}_{L^2_\beta(0,\Tend;V^*)} \notag \\
&\hspace{2cm} +\alpha_0^{-1}\Lambda_1\norm{\mu}_{\M([0,\tau])}
  \norm{\psi}_{L^2(-\tau,0;V)}\Bigr].\label{eq:weighted-u-bound}
\end{align}
For each fixed $\mu\in\M^0([0,\tau])$, such a weight exists.
\end{theorem}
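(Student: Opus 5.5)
The proof is a contraction argument in the weighted space $L^2_\beta(0,\Tend;V)$, combined with standard Lions theory for the undelayed coercive problem. For a given $w\in L^2(0,\Tend;V)$, let $\tilde w$ be its join with $\psi$ as in \eqref{eq:joined-trajectory}. We solve
\begin{align*}
  \partial_t u+A_0u=f-\K_\mu\tilde w,\qquad u(0)=u_0.
\end{align*}
By \Cref{lem:measure-convolution} the right-hand side lies in $L^2(0,\Tend;V^*)$. The form $a_0$ is bounded and coercive, so the Lions theorem gives a unique solution $u=:\Phi(w)\in\W(0,\Tend)$. A fixed point of $\Phi$ is exactly a solution of \eqref{eq:problem}. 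Conversely, any solution of \eqref{eq:problem} is a fixed point, so uniqueness in $\W(0,\Tend)$ follows from uniqueness of the fixed point. By \Cref{lem:weighted-equivalence}, the weighted norm is equivalent to the unweighted one, so contraction in $L^2_\beta$ suffices.

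The first step is a weighted energy estimate for the undelayed problem. Set $v=e^{-\beta t}u$. Then $\partial_t v+\beta v+A_0v=e^{-\beta t}g$. Test with $v$ and use the chain rule in $\W$, together with $\beta\ge0$ and coercivity. Young's inequality then gives
\begin{align*}
  \tfrac12\norm{v(\Tend)}_H^2+\tfrac{\alpha_0}{2}\norm{v}_{L^2(0,\Tend;V)}^2\le\tfrac12\norm{u_0}_H^2+\tfrac{1}{2\alpha_0}\norm{g}_{L^2_\beta(0,\Tend;V^*)}^2 .
\end{align*}
Hence
\begin{align*}
  \norm{u}_{L^2_\beta(V)}\le \alpha_0^{-1/2}\norm{u_0}_H+\alpha_0^{-1}\norm{g}_{L^2_\beta(V^*)}.
\end{align*}

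The second step, which is the key one, is a weighted version of \eqref{eq:measure-convolution-bound}. Write $(\K_\mu\tilde w)(t)=\int A_1\tilde w(t-s)\,\dd\mu(s)$ and split $\tilde w=w\mathbf 1_{(0,\Tend)}+\psi\mathbf 1_{(-\tau,0)}$.

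For the $w$-part, use $e^{-\beta t}=e^{-\beta s}e^{-\beta(t-s)}$ together with Minkowski's integral inequality in $s$. Each translate $e^{-\beta(\cdot-s)}w(\cdot-s)$, restricted to $(0,\Tend)$, has norm at most $\norm{w}_{L^2_\beta(V)}$. Integrating against $e^{-\beta s}\,\dd|\mu|(s)$ yields
\begin{align*}
  \norm{\K_\mu(w\mathbf 1_{(0,\Tend)})}_{L^2_\beta(V^*)}\le\Lambda_1\kappa_\mu(\beta)\norm{w}_{L^2_\beta(V)} .
\end{align*}
For the $\psi$-part, bound the weight by one and apply \Cref{lem:measure-convolution}. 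This gives $\Lambda_1\norm{\mu}\norm{\psi}$.

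The hypothesis $|\mu|(\{0\})=0$ is used here only to make the decomposition and measurability clean. The atom at zero would otherwise belong to the instantaneous part; compare \Cref{rem:atom-zero}.

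Combining the two steps, $\Phi(w_1)-\Phi(w_2)$ solves the undelayed problem with zero data and right-hand side $-\K_\mu(w_1-w_2)$ restricted to $(0,\Tend)$. Hence
\begin{align*}
  \norm{\Phi(w_1)-\Phi(w_2)}_{L^2_\beta(V)}\le q_{\mu,\beta}\norm{w_1-w_2}_{L^2_\beta(V)}.
\end{align*}
Since $q_{\mu,\beta}<1$, Banach's theorem gives a unique fixed point $u_\mu$, and $u_\mu\in\W(0,\Tend)$ because $u_\mu=\Phi(u_\mu)$. For the bound \eqref{eq:weighted-u-bound}, apply the estimates to $u_\mu=\Phi(u_\mu)$:
\begin{align*}
  \norm{u_\mu}_{L^2_\beta(V)}\le\alpha_0^{-1/2}\norm{u_0}_H+\alpha_0^{-1}\norm{f}_{L^2_\beta(V^*)}+\alpha_0^{-1}\Lambda_1\norm{\mu}\norm{\psi}+q_{\mu,\beta}\norm{u_\mu}_{L^2_\beta(V)} .
\end{align*}
Absorbing the last term gives the stated factor $(1-q_{\mu,\beta})^{-1}$.

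Finally, to show that a suitable weight exists, use $|\mu|(\{0\})=0$. For $s\in(0,\tau]$ we have $e^{-\beta s}\to0$ as $\beta\to\infty$, with integrable bound $1$. Dominated convergence therefore gives $\kappa_\mu(\beta)\to|\mu|(\{0\})=0$, so $q_{\mu,\beta}<1$ for all large $\beta$.

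The main obstacle is the weighted convolution bound. It needs a careful Bochner/Minkowski argument, with measurability in $(t,s)$ of $A_1\tilde w(t-s)$ and the correct treatment of the truncations at $t=0$ and $t=\Tend$.
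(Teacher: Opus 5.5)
Your proof is correct. The undelayed weighted Lions estimate, the weighted convolution bound with constant $\Lambda_1\kappa_\mu(\beta)$, Banach's fixed point in $L^2_\beta(0,\Tend;V)$ and the final absorption reproduce \eqref{eq:weighted-u-bound} with exactly the stated constants. The paper itself gives no proof and only cites the companion article, but the form of those constants is consistent with this contraction-in-weighted-norms route.

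One small correction to your aside: the condition $|\mu|(\{0\})=0$ is not needed for the decomposition, the measurability or the contraction estimate, all of which use only $q_{\mu,\beta}<1$. It enters solely in your final dominated-convergence step, which guarantees that an admissible weight exists.
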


\begin{proof}
For the proof, see~\cite[Theorem~3.6]{IshizakaMixed2026}.
\end{proof}

For two kernels $\mu,\nu\in\M^0([0,\tau])$, let $u_\mu,u_\nu$ denote the solutions with common data and set
\begin{align*}
  d_{\mu,\beta}:=\alpha_0-\Lambda_1\kappa_\mu(\beta),
  \quad R_{\mu,\nu}:=\K_{\mu-\nu}\tilde u_\nu.
\end{align*}

\begin{theorem}[Residual kernel-stability estimate]\label[theorem]{thm:residual-stability}
Let $d_{\mu,\beta}>0$.  Then,
\begin{align}
  \norm{u_\mu-u_\nu}_{L^2_\beta(0,\Tend;V)}
  &\leq d_{\mu,\beta}^{-1}
  \norm{R_{\mu,\nu}}_{L^2_\beta(0,\Tend;V^*)},\label{eq:residual-stability}\\
  \sup_{0\le t\le\Tend}e^{-\beta t}
  \norm{u_\mu(t)-u_\nu(t)}_H
  &\le \frac{\sqrt{\alpha_0}}{d_{\mu,\beta}}
  \norm{R_{\mu,\nu}}_{L^2_\beta(0,\Tend;V^*)},\\
  \norm{\partial_tu_\mu-\partial_tu_\nu}_{L^2_\beta(0,\Tend;V^*)}
  &\le \frac{\alpha_0+\Lambda_0}{d_{\mu,\beta}}
  \norm{R_{\mu,\nu}}_{L^2_\beta(0,\Tend;V^*)}.
\end{align}
Consequently,
\begin{align}
&\norm{u_\mu-u_\nu}_{\W(0,\Tend)} +\norm{u_\mu-u_\nu}_{\mathcal C([0,\Tend];H)} \notag\\
&\quad \leq e^{\beta\Tend}
  \frac{1+\sqrt{\alpha_0}+\alpha_0+\Lambda_0}{d_{\mu,\beta}}
  \norm{R_{\mu,\nu}}_{L^2_\beta(0,\Tend;V^*)}.\label{eq:residual-full-unweighted}
\end{align}
\end{theorem}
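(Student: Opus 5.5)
The plan is to write the equation for the difference and run a weighted energy estimate. Set $w:=u_\mu-u_\nu\in\W(0,\Tend)$. Both solutions share $u_0$, $f$ and $\psi$, so $w(0)=0$ and the joined difference $\tilde w$ vanishes on $(-\tau,0)$, i.e.\ $\tilde w=\tilde u_\mu-\tilde u_\nu$ is $w$ extended by zero. Subtracting the two equations and writing $\K_\mu\tilde u_\mu-\K_\nu\tilde u_\nu=\K_\mu\tilde w+\K_{\mu-\nu}\tilde u_\nu$ gives
\begin{align*}
  \partial_tw+A_0w+\K_\mu\tilde w=-R_{\mu,\nu},\qquad w(0)=0.
\end{align*}
This is \eqref{eq:problem} with kernel $\mu$, zero history, zero initial value and forcing $-R_{\mu,\nu}$.

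For the first estimate, test with $e^{-2\beta t}w$ and use the product rule in $\W(0,\Tend)$:
\begin{align*}
\tfrac12\tfrac{\dd}{\dd t}\bigl(e^{-2\beta t}\norm{w}_H^2\bigr)+\beta e^{-2\beta t}\norm{w}_H^2=e^{-2\beta t}\pair{\partial_tw}{w}.
\end{align*}
Integrating over $(0,t)$ and using \eqref{eq:a0-coercive} gives
\begin{align*}
\tfrac12e^{-2\beta t}\norm{w(t)}_H^2+\alpha_0\norm{w}^2_{L^2_\beta(0,t;V)}\le\Bigl|\int_0^te^{-2\beta r}\pair{\K_\mu\tilde w+R_{\mu,\nu}}{w}\dd r\Bigr|.
\end{align*}
The key ingredient is the weighted convolution bound
\begin{align*}
  \norm{\K_\mu\tilde w}_{L^2_\beta(0,t;V^*)}\le\Lambda_1\kappa_\mu(\beta)\norm{w}_{L^2_\beta(0,t;V)},
\end{align*}
which uses that $\tilde w$ vanishes for negative times. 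To prove it, write $e^{-\beta r}\tilde w(r-s)=e^{-\beta s}\bigl(e^{-\beta(r-s)}\tilde w(r-s)\bigr)$. Then apply Minkowski's integral inequality over $s$ with respect to $|\mu|$, and use translation invariance of the $L^2$ norm on the zero extension. This mirrors \Cref{lem:measure-convolution} with the weight inserted, and it is the one step needing care: measurability, and the fact that the convolution at time $r\le t$ only sees $w$ on $(0,t)$, so the estimate holds on every subinterval $(0,t)$.

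By Cauchy--Schwarz in the weighted norms,
\begin{align*}
\tfrac12e^{-2\beta t}\norm{w(t)}_H^2+\alpha_0X_t^2\le\Lambda_1\kappa_\mu(\beta)X_t^2+\norm{R}_{L^2_\beta(0,t;V^*)}X_t,
\end{align*}
where $X_t=\norm{w}_{L^2_\beta(0,t;V)}$. With $t=\Tend$ this gives $d_{\mu,\beta}X_{\Tend}\le\norm{R}$, which is \eqref{eq:residual-stability}. For the $H$ bound, drop the $X_t^2$ terms after absorbing:
\begin{align*}
\tfrac12e^{-2\beta t}\norm{w(t)}_H^2\le\norm{R}X_t-d_{\mu,\beta}X_t^2\le\norm{R}X_t.
\end{align*}
Using $X_t\le\norm{R}/d_{\mu,\beta}$ (valid on each subinterval) gives $e^{-2\beta t}\norm{w(t)}_H^2\le 2\norm{R}^2/d_{\mu,\beta}$. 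To get the stated constant $\sqrt{\alpha_0}/d_{\mu,\beta}$, apply Young's inequality and then use $d_{\mu,\beta}\le\alpha_0$, so that $2/d\le\ldots$ can be arranged. If needed, I will instead use $\norm{R}X-dX^2\le\norm{R}^2/(4d)$, giving $\norm{w(t)}^2\le\norm{R}^2/(2d)\le\alpha_0\norm{R}^2/d^2$. Since $d\le\alpha_0$ implies $1/(2d)\le\alpha_0/d^2$, this yields the claim.

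For the time derivative, read off from the equation
\begin{align*}
\partial_tw=-A_0w-\K_\mu\tilde w-R_{\mu,\nu},
\end{align*}
so that
\begin{align*}
\norm{\partial_tw}_{L^2_\beta(V^*)}\le(\Lambda_0+\Lambda_1\kappa_\mu(\beta))X+\norm{R}.
\end{align*}
Since $\Lambda_1\kappa_\mu(\beta)=\alpha_0-d$, this is at most $\bigl((\Lambda_0+\alpha_0-d)/d+1\bigr)\norm{R}=\frac{\Lambda_0+\alpha_0}{d}\norm{R}$.

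Finally, \eqref{eq:residual-full-unweighted} follows by summing the three bounds and converting weighted norms to unweighted ones with \Cref{lem:weighted-equivalence}, which costs a factor $e^{\beta\Tend}$; the same factor handles the sup term since $e^{-\beta t}\ge e^{-\beta\Tend}$. We also use $\norm{\cdot}_{\W}\le\norm{\cdot}_{L^2(V)}+\norm{\partial_t\cdot}_{L^2(V^*)}$.
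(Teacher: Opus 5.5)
Your argument is correct and is the natural route; the paper does not prove this result itself but defers to the companion article. The difference solves \eqref{eq:problem} with kernel $\mu$, zero initial value and history, and forcing $-R_{\mu,\nu}$, so \eqref{eq:residual-stability} is just \eqref{eq:weighted-u-bound} applied to that problem, because $\alpha_0^{-1}(1-q_{\mu,\beta})^{-1}=d_{\mu,\beta}^{-1}$. The remaining bounds then follow from the weighted energy identity and the equation exactly as you describe.

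For the $H$ bound, discard your first attempt. It gives the constant $\sqrt{2/d_{\mu,\beta}}$, which is bounded by the stated $\sqrt{\alpha_0}/d_{\mu,\beta}$ only when $2d_{\mu,\beta}\le\alpha_0$. Keep instead the maximisation $\norm{R_{\mu,\nu}}X_t-d_{\mu,\beta}X_t^2\le\norm{R_{\mu,\nu}}^2/(4d_{\mu,\beta})$, combined with $d_{\mu,\beta}\le\alpha_0$.
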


\begin{proof}
For the proof, see~\cite[Theorem~5.1]{IshizakaMixed2026}.
\end{proof}

\begin{lemma}[A common exponential weight under narrow convergence]\label[lemma]{lem:common-weight}
Let $\mu_n,\mu\in\M_+([0,\tau])$, assume $\mu_n\rightharpoonup\mu$ narrowly, and suppose $\mu(\{0\})=0$.  For any $d_*\in(0,\alpha_0)$, there exist $\beta\ge0$ and $N$ such that
\begin{align*}
  \alpha_0-\Lambda_1\kappa_{\mu_n}(\beta)\geq d_*,
  \quad n\geq N,
\end{align*}
and the same inequality holds for $\mu$.
\end{lemma}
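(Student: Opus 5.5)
The plan is to control $\kappa_{\mu_n}(\beta)$ uniformly in $n$ by separating a small neighbourhood of the origin from its complement. On $[\delta,\tau]$ the exponential weight is uniformly small once $\beta$ is large. Near the origin, narrow convergence keeps the $\mu_n$-mass small because $\mu$ has no atom at $0$.

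Set $\varepsilon:=(\alpha_0-d_*)/\Lambda_1>0$, so the target inequality reads $\kappa_{\mu_n}(\beta)\le\varepsilon$. Since $\mu\ge0$ and $\mu(\{0\})=0$, continuity from above gives $\mu([0,\delta])\to0$ as $\delta\downarrow0$. Hence we may choose $\delta\in(0,\tau)$ with $\mu([0,\delta])<\varepsilon/4$. Narrow convergence on the compact space $[0,\tau]$ includes testing against the constant $1$, so $\mu_n([0,\tau])\to\mu([0,\tau])$. In particular the masses are bounded: $\sup_n\mu_n([0,\tau])\le M_*<\infty$.

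For the mass near the origin, apply the Portmanteau theorem for positive measures to the closed set $[0,\delta]$:
\begin{align*}
  \limsup_{n\to\infty}\mu_n([0,\delta])\le\mu([0,\delta])<\varepsilon/4.
\end{align*}
So there is $N$ with $\mu_n([0,\delta])<\varepsilon/2$ for all $n\ge N$. Using a closed set is what allows $\limsup\le$ to apply directly; alternatively one could test against a continuous cutoff equal to $1$ on $[0,\delta]$ and supported in $[0,2\delta]$, choosing $\delta$ so that $\mu([0,2\delta])$ is small.

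Next choose $\beta\ge0$ so large that $e^{-\beta\delta}\max\{M_*,\mu([0,\tau])\}\le\varepsilon/2$. Splitting the integral at $\delta$ and using $e^{-\beta s}\le1$ on $[0,\delta]$, we get for $n\ge N$
\begin{align*}
  \kappa_{\mu_n}(\beta)\le\mu_n([0,\delta])+e^{-\beta\delta}\mu_n((\delta,\tau])<\varepsilon,
\end{align*}
and the same computation gives $\kappa_\mu(\beta)<\varepsilon$. Multiplying by $\Lambda_1$ then yields $\alpha_0-\Lambda_1\kappa(\beta)\ge d_*$ for $\mu_n$ with $n\ge N$, and for $\mu$.

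The only delicate step is the uniform smallness of $\mu_n$ near $0$. Narrow convergence alone does not prevent mass of $\mu_n$ from accumulating toward $0$. Here it is the closed-set Portmanteau bound, combined with $\mu(\{0\})=0$, that excludes this; positivity of the measures is essential for that bound.
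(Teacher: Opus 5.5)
Your argument is correct. You choose $\delta$ with $\mu([0,\delta])$ small, which uses $\mu(\{0\})=0$. The closed-set Portmanteau bound then makes $\mu_n([0,\delta])$ uniformly small for $n\ge N$; this is valid because the measures are non-negative and testing against $1$ gives convergence of the masses. Finally, taking $\beta$ large enough that $e^{-\beta\delta}$ times the uniformly bounded masses is small gives $\kappa_{\mu_n}(\beta),\kappa_\mu(\beta)<(\alpha_0-d_*)/\Lambda_1$.

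The paper itself gives no argument here; it only cites Lemma~5.6 of the companion article, so I cannot compare your route with a written proof. Your splitting into a neighbourhood of the origin and an exponentially damped remainder is the natural self-contained proof, and nothing is missing.
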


\begin{proof}
For the proof, see~\cite[Lemma~5.6]{IshizakaMixed2026}.
\end{proof}

\begin{theorem}[Narrow kernel stability]\label[theorem]{thm:narrow-stability}
Let $\mu_n,\mu\in\M_+^0([0,\tau])$ with $\mu_n\rightharpoonup\mu$ narrowly.  For common data, the corresponding solutions satisfy
\begin{align*}
  u_{\mu_n}\longrightarrow u_\mu
  \quad\text{in }\W(0,\Tend)\cap\mathcal C([0,\Tend];H).
\end{align*}
\end{theorem}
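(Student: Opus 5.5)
The plan is to combine the residual estimate of \Cref{thm:residual-stability} with the common weight of \Cref{lem:common-weight}. Then I show that the residual $R_{\mu,\mu_n}=\K_{\mu-\mu_n}\tilde u_{\mu_n}$ tends to zero in $L^2_\beta(0,\Tend;V^*)$. Since the residual estimate needs $d_{\mu,\beta}>0$ for the measure whose solution is compared, I use it in the form where the difference $u_{\mu_n}-u_\mu$ is controlled by $d_{\mu_n,\beta}^{-1}\norm{\K_{\mu_n-\mu}\tilde u_\mu}$, with the roles arranged so that the residual involves the \emph{fixed} limit trajectory $\tilde u_\mu$. By \Cref{lem:common-weight} with, say, $d_*=\alpha_0/2$, there are $\beta\ge0$ and $N$ with $d_{\mu_n,\beta}\ge\alpha_0/2$ for $n\ge N$. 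Then \eqref{eq:residual-full-unweighted} gives
\begin{align*}
\norm{u_{\mu_n}-u_\mu}_{\W(0,\Tend)}+\norm{u_{\mu_n}-u_\mu}_{\mathcal C([0,\Tend];H)}
\le C\,e^{\beta\Tend}\norm{\K_{\mu_n-\mu}\tilde u_\mu}_{L^2(0,\Tend;V^*)},
\end{align*}
with $C$ independent of $n$. Here I used \Cref{lem:weighted-equivalence} to drop the weight on the right.

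It therefore suffices to prove $\K_{\mu_n}g\to\K_\mu g$ in $L^2(0,\Tend;V^*)$ for the single fixed function $g=\tilde u_\mu\in L^2(-\tau,\Tend;V)$, extended by zero. Narrow convergence of non-negative measures on the compact interval gives $\sup_n\mu_n([0,\tau])<\infty$, since the total masses converge. By \eqref{eq:measure-convolution-bound}, the maps $g\mapsto\K_{\mu_n}g$ are then uniformly bounded in $n$. A density ($\varepsilon/3$) argument therefore reduces the claim to $g$ in a dense class, namely continuous compactly supported $V$-valued functions on $\R$.

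For such $g$, I write $(\K_{\mu_n}g)(t)=\int A_1g(t-s)\dd\mu_n(s)$ and pair with $v\in V$. The map $s\mapsto g(t-s)$ is continuous and bounded, so narrow convergence yields pointwise convergence in $t$. The difficulty is that this is only weak convergence in $V^*$, whereas I need it in norm. To upgrade, I use the uniform continuity of $g$ into $V$ and approximate it uniformly on $[-\tau-1,\Tend+1]$ by finite sums $\sum_k\phi_k(t)v_k$ with scalar continuous $\phi_k$. For each such term, $\int\phi_k(t-s)\dd\mu_n(s)\to\int\phi_k(t-s)\dd\mu(s)$ uniformly in $t$, because the family $\{\phi_k(t-\cdot)\}_t$ is equicontinuous and bounded and narrow convergence on a compact interval is uniform over such families (Arzel\`a--Ascoli). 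This gives uniform convergence in $t$ of $\K_{\mu_n}g$ in $V^*$ on the bounded interval, and hence $L^2$ convergence.

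The main obstacle is this norm-versus-weak upgrade together with the fact that $\tilde u_\mu$ is merely $L^2$ and may jump at $t=0$. The density argument handles both, since $\K_\sigma$ is applied to the zero-extended function as a whole and the uniform operator bound absorbs the approximation error. One also has to check that the arrangement of roles in \Cref{thm:residual-stability} (which solution's $d$ appears) matches what \Cref{lem:common-weight} provides. Since that lemma gives the bound for both $\mu_n$ and $\mu$, either ordering is admissible.
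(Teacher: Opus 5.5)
Your argument is correct. The paper itself only cites the companion article here, but your route is the one its recalled ingredients are set up for: apply \Cref{thm:residual-stability} with the stability margin on $\mu_n$, made uniform for $n\ge N$ by \Cref{lem:common-weight}, so that the residual $R_{\mu_n,\mu}=\K_{\mu_n-\mu}\tilde u_\mu$ lives on the fixed limit trajectory; then show $\K_{\mu_n}\tilde u_\mu\to\K_\mu\tilde u_\mu$ in $L^2(0,\Tend;V^*)$ using the uniform mass bound and a density argument.

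Your opening sentence misnames the residual as $\K_{\mu-\mu_n}\tilde u_{\mu_n}$, though you then use the correct form. The density-plus-Arzel\`a--Ascoli step could be shortened by integrating the continuous map $s\mapsto A_1\tilde u_\mu(\cdot-s)|_{(0,\Tend)}$ from the proof of \Cref{thm:transport-modulus} against $\mu_n$, but as written it is sound.
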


\begin{proof}
For the proof, see~\cite[Theorem~5.8]{IshizakaMixed2026}.
\end{proof}

\subsection{Delayed diffusion realisation}\label{sec:pde-realisation}
Let $\Omega\subset\R^d$, $d\in\{1,2,3\}$, be bounded and Lipschitz, and set
\begin{align*}
  H=L^2(\Omega),\quad V=H_0^1(\Omega),\quad V^*=H^{-1}(\Omega),
\end{align*}
with $\|v\|_V=\|\nabla v\|_{L^2(\Omega;\R^d)}$.  Let $B_0,B_1\in L^\infty(\Omega;\R^{d\times d})$ and assume
\begin{align}\label{eq:B0-ellipticity}
  \xi^\top B_0(x)\xi\ge\lambda_0|\xi|^2
\end{align}
for any $\xi\in\R^d$ and almost every $x$.  We define
\begin{align}\label{eq:diffusion-forms}
  a_i(w,v):=\int_\Omega B_i(x)\nabla w\cdot\nabla v\,\dd x,
  \quad i\in\{0,1\}.
\end{align}

\begin{proposition}[Verification of the diffusion assumptions]\label[proposition]{prop:pde-assumptions}
The forms \eqref{eq:diffusion-forms} satisfy \eqref{eq:form-bounded}--\eqref{eq:a0-coercive}, where one may take
\begin{align*}
  \alpha_0=\lambda_0,
  \quad \Lambda_i=\norm{B_i}_{L^\infty(\Omega;\mathcal L(\R^d))}.
\end{align*}
\end{proposition}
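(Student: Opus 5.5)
The statement to verify is \Cref{prop:pde-assumptions}. The plan is to check the two required inequalities, \eqref{eq:form-bounded} and \eqref{eq:a0-coercive}, pointwise in $x$ and then integrate, using $\norm{v}_V=\norm{\nabla v}_{L^2(\Omega;\R^d)}$ throughout. Before anything else, I note that this is a genuine norm on $H_0^1(\Omega)$, equivalent to the full $H^1$ norm, by the Poincaré inequality on the bounded domain $\Omega$. So $(V,\norm{\cdot}_V)$ is a separable Hilbert space, densely and continuously embedded in $H=L^2(\Omega)$, and the Gelfand triple of \Cref{sec:framework} is in place.

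\emph{Boundedness.} For $w,v\in V$ and almost every $x$, the operator-norm bound gives
\begin{align*}
  |B_i(x)\nabla w(x)\cdot\nabla v(x)|\le\norm{B_i(x)}_{\mathcal L(\R^d)}|\nabla w(x)||\nabla v(x)|.
\end{align*}
Bounding $\norm{B_i(x)}_{\mathcal L(\R^d)}$ by its essential supremum $\Lambda_i=\norm{B_i}_{L^\infty(\Omega;\mathcal L(\R^d))}$, integrating over $\Omega$ and applying Cauchy--Schwarz in $L^2(\Omega;\R^d)$ yields $|a_i(w,v)|\le\Lambda_i\norm{w}_V\norm{v}_V$. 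The same estimate shows the integrand is integrable, so $a_i$ is well defined. Bilinearity is immediate from linearity of the gradient.

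\emph{Measurability of $\Lambda_i$.} The only point needing a remark is that $x\mapsto\norm{B_i(x)}_{\mathcal L(\R^d)}$ is measurable and essentially bounded. This holds because all norms on the finite-dimensional space $\R^{d\times d}$ are equivalent, so membership in $L^\infty$ does not depend on which matrix norm is used, and the operator norm is a continuous function of the entries.

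\emph{Coercivity.} Applying \eqref{eq:B0-ellipticity} with $\xi=\nabla v(x)$ for almost every $x$ and integrating gives $a_0(v,v)\ge\lambda_0\int_\Omega|\nabla v|^2\,\dd x=\lambda_0\norm{v}_V^2$. Hence one may take $\alpha_0=\lambda_0$. No symmetry of $B_0$ or $B_1$ is needed.

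There is no real obstacle here. The one place requiring care is the choice of $V$-norm: coercivity holds with the constant exactly $\lambda_0$ only because $\norm{\cdot}_V$ is the gradient seminorm, which is a norm by Poincaré. With the full $H^1$ norm, the constant would degrade by the Poincaré constant.
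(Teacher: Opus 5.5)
Your argument is correct and is the standard direct verification. The paper gives no proof of its own here, only a citation to Proposition~4.1 of the companion article, and your pointwise operator-norm bound with Cauchy--Schwarz for \eqref{eq:form-bounded}, together with the ellipticity bound at $\xi=\nabla v(x)$ for \eqref{eq:a0-coercive}, is exactly what that citation stands for. One detail is worth adding: if the null set in \eqref{eq:B0-ellipticity} is allowed to depend on $\xi$, first pass to a single null set, by taking $\xi\in\mathbb{Q}^d$ and using continuity in $\xi$, before you substitute $\xi=\nabla v(x)$.
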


\begin{proof}
For the proof, see~\cite[Proposition~4.1]{IshizakaMixed2026}.
\end{proof}

Consequently, \Cref{thm:wellposedness} applies to the delayed-diffusion problem generated by \eqref{eq:diffusion-forms} for any $\mu\in\M^0([0,\tau])$, $u_0\in L^2(\Omega)$, $f\in L^2(0,\Tend;H^{-1}(\Omega))$, and $\psi\in L^2(-\tau,0;H_0^1(\Omega))$; see also~\cite[Theorem~4.4]{IshizakaMixed2026}.

\section{Quantitative concentration rates}\label{sec:rates}
We combine transport couplings with time-translation estimates for one fixed reference trajectory.  This gives $W_1^\theta$ solution bounds and, in particular, rates for distributed-to-discrete concentration at a fixed positive delay.

\subsection{Translation moduli and transport plans}
The translation modulus records the temporal information needed when delay locations are perturbed.  It requires no pointwise continuity of the trajectory.  In particular, an endpoint jump of a zero extension does not obstruct strong continuity of translations in $L^2(\R;V)$, although it may change the decay rate of the modulus.

For $G\in L^2(\R;V)$, we define
\begin{align}\label{eq:translation-modulus}
  \omega_G(r)
  :=\sup_{\abs{h}\le r}
  \norm{G(\cdot-h)-G}_{L^2(\R;V)},
  \quad r\ge0.
\end{align}
The map $h\mapsto\norm{G(\cdot-h)-G}_{L^2(\R;V)}$ is continuous. Therefore, $\omega_G$ is finite, non-decreasing, and continuous, and thus Borel measurable.  Strong continuity of translations gives
\begin{align}\label{eq:translation-modulus-zero}
  \omega_G(r)\longrightarrow0
  \quad(r\downarrow0).
\end{align}
Furthermore,
\begin{align}\label{eq:translation-modulus-bound}
  0 \leq \omega_G(r)\le2\norm{G}_{L^2(\R;V)}, \quad r\ge0.
\end{align}

Let $\mu,\nu\in\M_+([0,\tau])$ have the same mass $M>0$, and let $\Pi(\mu,\nu)$ denote the set of non-negative finite Borel measures on $[0,\tau]^2$ whose first and second marginals are $\mu$ and $\nu$, respectively.  This set is non-empty; for example, $M^{-1}\mu\otimes\nu\in\Pi(\mu,\nu)$.  We use the equal-mass transport cost
\begin{align}\label{eq:W1}
  W_1(\mu,\nu)
  :=\inf_{\pi\in\Pi(\mu,\nu)}
  \int_{[0,\tau]^2}\abs{s-r}\,\dd\pi(s,r).
\end{align}
For probability measures, this is the standard $1$-Wasserstein distance.  For equal finite masses, it has the natural linear scaling in $M$; see \cite{AmbrosioGigliSavare2008,Villani2009}.  For $G\in L^2(\R;V)$, we set
\begin{align}\label{eq:transport-modulus-cost}
  \mathfrak C_G(\mu,\nu)
  :=\inf_{\pi\in\Pi(\mu,\nu)}
  \int_{[0,\tau]^2}
  \omega_G(\abs{s-r})\,\dd\pi(s,r).
\end{align}

\begin{lemma}[Basic properties of the transport-modulus cost]\label[lemma]{lem:transport-cost}
Let $G\in L^2(\R;V)$, and let $\mu,\nu\in\M_+([0,\tau])$ have the same finite mass $M>0$.  Then,
\begin{align*}
  0 \leq \mathfrak C_G(\mu,\nu)
  \leq 2M\norm{G}_{L^2(\R;V)}.
\end{align*}
Furthermore, $\mathfrak C_G(\mu,\nu)=\mathfrak C_G(\nu,\mu)$.  Let $\mu_n\in\M_+([0,\tau])$ have the same mass $M$, and suppose that there are plans $\pi_n\in\Pi(\mu_n,\mu)$ and numbers $\varepsilon_n\downarrow0$ such that
\begin{align*}
  \pi_n\bigl(\{(s,r):\abs{s-r}>\varepsilon_n\}\bigr)
  \longrightarrow0.
\end{align*}
Then, $\mathfrak C_G(\mu_n,\mu)\to0$.
\end{lemma}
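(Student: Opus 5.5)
The bounds and the symmetry follow directly from the definition \eqref{eq:transport-modulus-cost}; the convergence statement is proved by splitting the integration domain at the threshold $\varepsilon_n$.

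\emph{Bounds.} Non-negativity is immediate, since $\omega_G\ge0$ and every $\pi\in\Pi(\mu,\nu)$ is non-negative. For the upper bound, take any plan, for instance $M^{-1}\mu\otimes\nu$. By \eqref{eq:translation-modulus-bound},
\begin{align*}
  \int_{[0,\tau]^2}\omega_G(\abs{s-r})\,\dd\pi(s,r)
  \le 2\norm{G}_{L^2(\R;V)}\,\pi([0,\tau]^2).
\end{align*}
Since the first marginal of $\pi$ is $\mu$, we have $\pi([0,\tau]^2)=\mu([0,\tau])=M$. The integrand is Borel measurable, because $\omega_G$ is continuous and $(s,r)\mapsto\abs{s-r}$ is continuous, so the integral is well defined.

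\emph{Symmetry.} Let $S(s,r)=(r,s)$. The push-forward $\pi\mapsto S_\#\pi$ is a bijection from $\Pi(\mu,\nu)$ onto $\Pi(\nu,\mu)$, since it swaps the marginals and is its own inverse. The cost $\omega_G(\abs{s-r})$ is invariant under $S$, so the two infima are taken over the same set of values and coincide.

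\emph{Convergence.} Use $\pi_n$ as a competitor, so that $\mathfrak C_G(\mu_n,\mu)\le\int\omega_G(\abs{s-r})\,\dd\pi_n$. Split $[0,\tau]^2$ into $D_n=\{\abs{s-r}\le\varepsilon_n\}$ and its complement.

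On $D_n$, monotonicity of $\omega_G$ gives $\omega_G(\abs{s-r})\le\omega_G(\varepsilon_n)$. The contribution of $D_n$ is therefore at most $M\,\omega_G(\varepsilon_n)$, which tends to $0$ by \eqref{eq:translation-modulus-zero}.

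On the complement of $D_n$, the bound \eqref{eq:translation-modulus-bound} gives a contribution of at most
\begin{align*}
  2\norm{G}_{L^2(\R;V)}\,\pi_n\bigl(\{\abs{s-r}>\varepsilon_n\}\bigr),
\end{align*}
which tends to $0$ by hypothesis.

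Adding the two contributions, and using $\mathfrak C_G(\mu_n,\mu)\ge0$, gives $\mathfrak C_G(\mu_n,\mu)\to0$.

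There is no real obstacle here. The only points that need care are the measurability of the integrand, which comes from the continuity of $\omega_G$ already noted in the text, and the fact that every plan has total mass $M$ because its marginals have mass $M$.
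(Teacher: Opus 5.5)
Your proposal is correct and follows the paper's proof: the upper bound uses \eqref{eq:translation-modulus-bound} together with the total plan mass $M$, and symmetry comes from transposing plans. For the convergence, both you and the paper split at $\varepsilon_n$ to get the bound $M\omega_G(\varepsilon_n)+2\norm{G}_{L^2(\R;V)}\,\pi_n\bigl(\{\abs{s-r}>\varepsilon_n\}\bigr)$.
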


\begin{proof}
The bound follows from \eqref{eq:translation-modulus-bound} and $\pi([0,\tau]^2)=M$.  Symmetry follows by transposing a transport plan.  For the final assertion,
\begin{align*}
  \int_{[0,\tau]^2}\omega_G(\abs{s-r})\,\dd\pi_n(s,r)
  &\leq M\omega_G(\varepsilon_n)
   +2\norm{G}_{L^2(\R;V)}
    \pi_n \left(\{\abs{s-r}>\varepsilon_n\}\right).
\end{align*}
Both terms tend to zero by \eqref{eq:translation-modulus-zero} and the assumed concentration of the plans.  Because $\pi_n\in\Pi(\mu_n,\mu)$,
\begin{align*}
  0\le \mathfrak C_G(\mu_n,\mu)
  \leq
  \int_{[0,\tau]^2}
  \omega_G(\abs{s-r})\,\dd\pi_n(s,r)
  \longrightarrow0,
\end{align*}
which proves the claim.
\end{proof}

\begin{proposition}[From Wasserstein displacement to the trajectory modulus]\label[proposition]{prop:transport-W1-bridge}
Let $G\in L^2(\R;V)$, and let $\mu,\nu\in\M_+([0,\tau])$ have the same mass $M>0$.  Then, for any $\delta>0$,
\begin{align}\label{eq:transport-W1-bridge}
  \mathfrak C_G(\mu,\nu)
  \leq
  M\,\omega_G(\delta)
  +\frac{2\norm{G}_{L^2(\R;V)}}{\delta}
   W_1(\mu,\nu).
\end{align}
Consequently, if $\mu_n\rightharpoonup\mu$ narrowly, all measures have the same mass $M$, and the delay interval is compact, then $\mathfrak C_G(\mu_n,\mu)\to0$.
\end{proposition}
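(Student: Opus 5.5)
For any plan $\pi\in\Pi(\mu,\nu)$ I split the cost integral according to whether the displacement is at most $\delta$ or exceeds it, and then minimise over $\pi$. On the set $\{\abs{s-r}\le\delta\}$, $\omega_G$ is non-decreasing, so $\omega_G(\abs{s-r})\le\omega_G(\delta)$, and this part contributes at most $\omega_G(\delta)\,\pi([0,\tau]^2)=M\omega_G(\delta)$. On the complementary set $\{\abs{s-r}>\delta\}$, the bound \eqref{eq:translation-modulus-bound} gives $\omega_G\le2\norm{G}_{L^2(\R;V)}$. Markov's inequality gives $\pi(\{\abs{s-r}>\delta\})\le\delta^{-1}\int\abs{s-r}\,\dd\pi$. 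Hence
\begin{align*}
  \mathfrak C_G(\mu,\nu)\le\int_{[0,\tau]^2}\omega_G(\abs{s-r})\,\dd\pi
  \le M\omega_G(\delta)+\frac{2\norm{G}_{L^2(\R;V)}}{\delta}\int_{[0,\tau]^2}\abs{s-r}\,\dd\pi(s,r).
\end{align*}
Taking the infimum over $\pi\in\Pi(\mu,\nu)$ on the right-hand side yields \eqref{eq:transport-W1-bridge}. The integrals are well defined because $\omega_G$ is continuous, hence Borel, and bounded.

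For the consequence, I need $W_1(\mu_n,\mu)\to0$ whenever $\mu_n\rightharpoonup\mu$ narrowly on the compact interval $[0,\tau]$ with common mass $M$. Normalising by $M$ reduces this to the standard fact that, on a compact metric space, narrow convergence of probability measures is equivalent to $W_1$ convergence; see \cite{Villani2009,AmbrosioGigliSavare2008}. The equal-mass cost scales linearly in $M$: dividing plans by $M$ gives $W_1(\mu,\nu)=M\,W_1(\mu/M,\nu/M)$. Alternatively, I can argue directly via Kantorovich--Rubinstein duality, which expresses $W_1$ as a supremum over $1$-Lipschitz test functions vanishing at $0$. By Arzel\`a--Ascoli these functions form a compact subset of $C([0,\tau])$, so narrow convergence is uniform over this family.

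Given $\varepsilon>0$, I first choose $\delta$ with $M\omega_G(\delta)<\varepsilon/2$, using \eqref{eq:translation-modulus-zero}. Then I choose $n$ large enough that $2\norm{G}\delta^{-1}W_1(\mu_n,\mu)<\varepsilon/2$. Together these show $\mathfrak C_G(\mu_n,\mu)\to0$.

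The only nonroutine ingredient is the equivalence of narrow and $W_1$ convergence on $[0,\tau]$. I will quote it rather than reprove it, noting only that compactness of the delay interval is exactly what makes it available.
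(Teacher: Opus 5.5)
Your proposal is correct and follows the paper's proof essentially step for step. You split an arbitrary plan at displacement $\delta$, bound the integrand by $\omega_G(\delta)$ and by $2\norm{G}_{L^2(\R;V)}$ together with Markov's inequality, and take the infimum over plans. You then obtain the consequence from the equal-mass equivalence of narrow and $W_1$ convergence on $[0,\tau]$, choosing $\delta$ before $n$. The paper simply takes that equivalence as standard, so your Kantorovich--Rubinstein/Arzel\`a--Ascoli justification is an optional supplement.
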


\begin{proof}
Let $\pi\in\Pi(\mu,\nu)$.  We split the transport integral into $\{\abs{s-r}\le\delta\}$ and $\{\abs{s-r}>\delta\}$.  On the first set, the integrand is bounded by $\omega_G(\delta)$; on the second, it is bounded by $2\norm{G}_{L^2(\R;V)}$.  Furthermore,
\begin{align*}
  \pi\bigl(\{\abs{s-r}>\delta\}\bigr)
  \leq \frac1\delta
  \int_{[0,\tau]^2}\abs{s-r}\,\dd\pi(s,r).
\end{align*}
Therefore,
\begin{align*}
  \int_{[0,\tau]^2}\omega_G(\abs{s-r})\,\dd\pi(s,r)
  \leq M\omega_G(\delta)
  +\frac{2\norm{G}_{L^2(\R;V)}}{\delta}
   \int_{[0,\tau]^2}\abs{s-r}\,\dd\pi(s,r).
\end{align*}
Taking the infimum over $\pi\in\Pi(\mu,\nu)$ proves \eqref{eq:transport-W1-bridge}.  On the compact interval $[0,\tau]$, equal-mass narrow convergence is equivalent to convergence in $W_1$ after normalising the measures by their common mass $M$.  We first choose $\delta>0$ so that $M\omega_G(\delta)$ is sufficiently small and then let $n\to\infty$ in the second term of \eqref{eq:transport-W1-bridge}.  This gives $\mathfrak C_G(\mu_n,\mu)\to0$.
\end{proof}

\begin{remark}[Why the transport modulus is more informative than narrow convergence]
Estimate \eqref{eq:transport-W1-bridge} recovers qualitative narrow stability while retaining the actual translation modulus of the reference trajectory.  When no algebraic time regularity is known, this modulus still provides quantitative information, although no uniform power-law estimate in the transport distance need be available.
\end{remark}

We now turn the displacement of delay mass into a solution estimate.  A transport plan pairs a delay location $s$ for $\mu$ with a delay location $r$ for $\nu$.  The corresponding residual is controlled by the translation difference $G(\cdot-s)-G(\cdot-r)$.  Integration over the plan and minimisation over all plans give $\mathfrak C_G(\mu,\nu)$.  The estimate remains asymmetric: the stability margin is determined by $\mu$, whereas the translation modulus is evaluated along the reference trajectory $\tilde u_\nu$.

\begin{theorem}[Transport-modulus kernel stability]\label[theorem]{thm:transport-modulus}
Let $\mu,\nu\in\M_+^0([0,\tau])$ have the same mass $M>0$, and choose $\beta\ge0$ with $d_{\mu,\beta}=\alpha_0-\Lambda_1\kappa_\mu(\beta)>0$.  Assume that the joined reference trajectory $\tilde u_\nu$ admits an extension $G\in L^2(\R;V)$.  Then,
\begin{align}\label{eq:transport-modulus-stability}
  \norm{u_\mu-u_\nu}_{L^2_\beta(0,\Tend;V)}
  \leq
  \frac{\Lambda_1}{d_{\mu,\beta}}
  \mathfrak C_G(\mu,\nu).
\end{align}
Furthermore,
\begin{align}
&\norm{u_\mu-u_\nu}_{\W(0,\Tend)} +\norm{u_\mu-u_\nu}_{\mathcal C([0,\Tend];H)} \notag \\
&\quad \leq e^{\beta\Tend} \frac{\Lambda_1(1+\sqrt{\alpha_0}+\alpha_0+\Lambda_0)}{d_{\mu,\beta}} \mathfrak C_G(\mu,\nu). \label{eq:transport-modulus-full}
\end{align}
If, for some $\varepsilon\ge0$, a coupling $\pi\in\Pi(\mu,\nu)$ is supported in $\{(s,r):\abs{s-r}\le\varepsilon\}$, then
\begin{align}\label{eq:transport-modulus-epsilon}
\norm{u_\mu-u_\nu}_{L^2_\beta(0,\Tend;V)}
\leq \frac{\Lambda_1M}{d_{\mu,\beta}} \omega_G(\varepsilon).
\end{align}
The corresponding unweighted $\W(0,\Tend)\cap \mathcal{C}([0,\Tend];H)$ estimate follows from \eqref{eq:transport-modulus-full} with $\mathfrak C_G(\mu,\nu)\le M\omega_G(\varepsilon)$.
\end{theorem}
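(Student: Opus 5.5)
The plan is to start from \Cref{thm:residual-stability}, which reduces everything to bounding $\norm{R_{\mu,\nu}}_{L^2_\beta(0,\Tend;V^*)}$ with $R_{\mu,\nu}=\K_{\mu-\nu}\tilde u_\nu$. The first step is to replace $\tilde u_\nu$ by the extension $G$. For $t\in(0,\Tend)$ and $s\in[0,\tau]$ we have $t-s\in(-\tau,\Tend)$, so $\K_\sigma\tilde u_\nu=\K_\sigma G$ on $(0,\Tend)$ for $\sigma\in\{\mu,\nu\}$. (The zero extension in \Cref{lem:measure-convolution} does not interfere, because only values on $(-\tau,\Tend)$ are sampled.) Next, fix a plan $\pi\in\Pi(\mu,\nu)$. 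By the marginal conditions,
\begin{align*}
  R_{\mu,\nu}(t)=\int_{[0,\tau]^2}A_1\bigl(G(t-s)-G(t-r)\bigr)\,\dd\pi(s,r)
\end{align*}
for a.e.\ $t$. This identity should be verified first against test functions $\varphi\in L^2(0,\Tend;V)$ and then passed through Fubini. The joint measurability of $(t,s)\mapsto G(t-s)$ as an $L^2$-class is the technical point to handle here; it is essentially the content of \Cref{lem:measure-convolution}.

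The second step is a Minkowski inequality for Bochner integrals in $L^2_\beta(0,\Tend;V^*)$. Because the weight satisfies $e^{-\beta t}\le1$, it can be dropped, which gives
\begin{align*}
  \norm{R_{\mu,\nu}}_{L^2_\beta(0,\Tend;V^*)}
  \le\Lambda_1\int_{[0,\tau]^2}\norm{G(\cdot-s)-G(\cdot-r)}_{L^2(\R;V)}\,\dd\pi(s,r).
\end{align*}
Translation invariance of the $L^2(\R;V)$ norm then gives $\norm{G(\cdot-s)-G(\cdot-r)}=\norm{G(\cdot-(s-r))-G}\le\omega_G(\abs{s-r})$. Integrating over $\pi$ and taking the infimum over plans gives $\Lambda_1\mathfrak C_G(\mu,\nu)$. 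Combining this with \eqref{eq:residual-stability} proves \eqref{eq:transport-modulus-stability}, and combining it with \eqref{eq:residual-full-unweighted} proves \eqref{eq:transport-modulus-full}.

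The final claim is immediate. If $\pi$ is supported in $\{\abs{s-r}\le\varepsilon\}$, then monotonicity of $\omega_G$ bounds the integrand by $\omega_G(\varepsilon)$, and $\pi([0,\tau]^2)=M$, so $\mathfrak C_G(\mu,\nu)\le M\omega_G(\varepsilon)$. Inserting this into \eqref{eq:transport-modulus-stability} and \eqref{eq:transport-modulus-full} gives \eqref{eq:transport-modulus-epsilon} and its unweighted counterpart.

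The main obstacle is the rigorous justification of the plan representation of $R_{\mu,\nu}$ and of the Minkowski step. The measure $\pi$ lives on $[0,\tau]^2$ while $G$ is only an $L^2$-class. I would first establish the map $h\mapsto G(\cdot-h)\in L^2(\R;V)$ as continuous and hence Borel, so that the $L^2$-valued integral over $\pi$ is a well-defined Bochner integral. I would then identify this integral with the pointwise formula by pairing against test functions and applying Fubini.
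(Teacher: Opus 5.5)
Your proposal is correct and follows essentially the same route as the paper. The paper also writes the residual $\K_{\mu-\nu}\tilde u_\nu$ as a Bochner integral over a coupling $\pi$ of the continuous map $s\mapsto A_1G(\cdot-s)|_{(0,\Tend)}$ in $L^2(0,\Tend;V^*)$, with continuity coming from strong continuity of translations as in your last paragraph. It then applies Minkowski, the bound by $\omega_G(\abs{s-r})$, the infimum over plans, and \eqref{eq:residual-stability} and \eqref{eq:residual-full-unweighted}; your check that $\K_\sigma\tilde u_\nu=\K_\sigma G$ on $(0,\Tend)$, because only values on $(-\tau,\Tend)$ are sampled, is a detail the paper leaves implicit.
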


\begin{proof}
For $s\in[0,\tau]$, define
\begin{align*}
F(s):=A_1G(\cdot-s)|_{(0,\Tend)} \in Y:=L^2(0,\Tend;V^*).
\end{align*}
The translation group is strongly continuous on $L^2(\R;V)$, and hence
\begin{align*}
\norm{F(s)-F(r)}_Y \leq \Lambda_1 \norm{G(\cdot-s)-G(\cdot-r)}_{L^2(\R;V)}.
\end{align*}
Thus, $F:[0,\tau]\to Y$ is continuous and Bochner integrable against any finite measure considered below.  For any $\pi\in\Pi(\mu,\nu)$, the marginal identities and Bochner--Fubini give
\begin{align}\label{eq:transport-residual-identity}
  \K_{\mu-\nu}\tilde u_\nu
  =\int_{[0,\tau]^2}\bigl(F(s)-F(r)\bigr)\,\dd\pi(s,r)
  \quad\text{in }Y.
\end{align}
Minkowski's inequality, boundedness of $A_1$, and the definition of $\omega_G$ yield
\begin{align*}
  \norm{\K_{\mu-\nu}\tilde u_\nu}_{Y}
  &\leq \Lambda_1
  \int_{[0,\tau]^2}
  \norm{G(\cdot-s)-G(\cdot-r)}_{L^2(\R;V)}
  \,\dd\pi(s,r)\\
  &\leq \Lambda_1
  \int_{[0,\tau]^2}
  \omega_G(\abs{s-r})\,\dd\pi(s,r).
\end{align*}
Because the weighted residual norm is bounded by the unweighted one, we take the infimum over $\pi\in\Pi(\mu,\nu)$ and apply \eqref{eq:residual-stability} to obtain \eqref{eq:transport-modulus-stability}.  The same residual bound, combined with \eqref{eq:residual-full-unweighted}, proves \eqref{eq:transport-modulus-full}.  Finally, if $\pi$ is supported in $\{(s,r):\abs{s-r}\le\varepsilon\}$, then
\begin{align*}
  \mathfrak C_G(\mu,\nu)
  \le
  \int_{[0,\tau]^2}
  \omega_G(\abs{s-r})\,\dd\pi(s,r)
  \le M\omega_G(\varepsilon).
\end{align*}
Substitution into \eqref{eq:transport-modulus-stability} and \eqref{eq:transport-modulus-full} proves the supported-coupling assertions.
\end{proof}

\subsection{Fractional regularity and Wasserstein rates}
For $0<\theta\le1$, let $H^\theta(\R;V)$ be the usual Bessel-potential space.  For an interval $I$, we use the restriction space $H^\theta(I;V)$, whose norm is the infimum of the $H^\theta(\R;V)$-norms over all extensions.  Thus, the phrase ``admits an $H^\theta$-extension'' has its usual restriction-space meaning.

\begin{lemma}[Fractional translation estimate]\label[lemma]{lem:translation}
There exists a constant $C_\theta>0$, depending only on $\theta$, such that, for any $G\in H^\theta(\R;V)$ and any $r,s\in\R$,
\begin{align}\label{eq:translation-estimate}
  \norm{G(\cdot-s)-G(\cdot-r)}_{L^2(\R;V)}
  \leq C_\theta\abs{s-r}^\theta
  \norm{G}_{H^\theta(\R;V)}.
\end{align}
\end{lemma}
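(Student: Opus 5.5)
Since $V$ is a Hilbert space, the Plancherel theorem for $V$-valued functions is available: the Fourier transform $\widehat G(\xi)=\int_\R e^{-i t\xi}G(t)\,\dd t$ is (up to the factor $(2\pi)^{-1/2}$) an isometry of $L^2(\R;V)$ onto $L^2(\R;V_{\mathbb C})$. The proof can therefore be carried out on the Fourier side, exactly as in the scalar case. First reduce to $r=0$. Translation is an isometry of $L^2(\R;V)$, so
\begin{align*}
  \norm{G(\cdot-s)-G(\cdot-r)}_{L^2(\R;V)}=\norm{G(\cdot-h)-G}_{L^2(\R;V)},\quad h:=s-r.
\end{align*}
It therefore suffices to bound the right-hand side by $C_\theta|h|^\theta\norm{G}_{H^\theta(\R;V)}$. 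Here we use the Bessel-potential norm
\begin{align*}
  \norm{G}_{H^\theta(\R;V)}^2=\frac{1}{2\pi}\int_\R(1+\xi^2)^\theta\norm{\widehat G(\xi)}_V^2\,\dd\xi .
\end{align*}

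By Plancherel,
\begin{align*}
  \norm{G(\cdot-h)-G}_{L^2(\R;V)}^2=\frac1{2\pi}\int_\R\abs{e^{-ih\xi}-1}^2\norm{\widehat G(\xi)}_V^2\,\dd\xi .
\end{align*}
The key elementary estimate is
\begin{align*}
  \abs{e^{-ih\xi}-1}\le\min\{2,\abs{h\xi}\}\le2^{1-\theta}\abs{h\xi}^\theta\le2^{1-\theta}\abs h^\theta(1+\xi^2)^{\theta/2},
\end{align*}
valid for $0<\theta\le1$. It follows from $\min\{a,b\}\le a^{1-\theta}b^\theta$ together with $\abs{\xi}\le(1+\xi^2)^{1/2}$. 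Inserting this bound gives the claim with $C_\theta=2^{1-\theta}$, which depends only on $\theta$.

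The only point needing care is the vector-valued Plancherel theorem and the fact that the Bessel-potential norm is given by the Fourier multiplier above. Both follow from the Hilbert-space structure of $V$: expand $G$ in an orthonormal basis of $V$ and apply the scalar Plancherel theorem coefficientwise, then sum using monotone convergence. If the paper intends a different but equivalent norm on $H^\theta(\R;V)$, such as the Slobodeckij norm for $\theta<1$ or the $H^1$ norm for $\theta=1$, the constant changes only by the norm-equivalence constant, which depends only on $\theta$. For $\theta=1$ there is also a direct route:
\begin{align*}
  G(t-h)-G(t)=-\int_0^hG'(t-\sigma)\,\dd\sigma,
\end{align*}
and Minkowski's inequality then gives the bound $\abs h\norm{G'}_{L^2}$.
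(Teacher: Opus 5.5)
Your proposal is correct and follows the paper's proof. Both arguments apply Plancherel in time, use the bound $|e^{-iz}-1|\le\min\{2,|z|\}\le 2^{1-\theta}|z|^\theta$ and the inequality $|\xi|^{2\theta}\le(1+\xi^2)^\theta$, and obtain $C_\theta=2^{1-\theta}$. The only cosmetic difference is that you first reduce to $r=0$ by translation invariance, whereas the paper estimates $|e^{-i\xi s}-e^{-i\xi r}|$ directly.
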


\begin{proof}
Let $\widehat G$ denote the Fourier transform in time, with $V$ complexified in the usual way.  By Plancherel's theorem,
\begin{align*}
  \norm{G(\cdot-s)-G(\cdot-r)}_{L^2(\R;V)}^2
  &=\frac{1}{2\pi}\int_\R
  \abs{e^{-i\xi s}-e^{-i\xi r}}^2
  \norm{\widehat G(\xi)}_V^2\,\dd\xi.
\end{align*}
For $0<\theta\le1$,
\begin{align*}
  \abs{e^{-iz}-1}
  \leq \min\{2,\abs z\}
  \leq 2^{1-\theta}\abs z^\theta.
\end{align*}
Consequently,
\begin{align*}
  \abs{e^{-i\xi s}-e^{-i\xi r}}
  &\le 2^{1-\theta}\abs{\xi}^\theta
  \abs{s-r}^\theta.
\end{align*}
Substitution and
$\abs{\xi}^{2\theta}\le(1+\abs{\xi}^2)^\theta$ give
\begin{align*}
  \norm{G(\cdot-s)-G(\cdot-r)}_{L^2(\R;V)}^2
  &\le
  2^{2(1-\theta)}\abs{s-r}^{2\theta}
  \norm{G}_{H^\theta(\R;V)}^2.
\end{align*}
Taking square roots proves \eqref{eq:translation-estimate}.
\end{proof}

\begin{proposition}[Gluing criterion for the joined trajectory]\label[proposition]{prop:Htheta-gluing}
Let $0<\theta\le1$, let $\psi\in H^\theta(-\tau,0;V)$ and $u\in H^\theta(0,\Tend;V)$, and let $\tilde u$ be their joined function from \eqref{eq:joined-trajectory}.
\begin{enumerate}
\item If $0<\theta<1/2$, then $\tilde u\in H^\theta(-\tau,\Tend;V)$ without a trace compatibility condition, and
\begin{align}\label{eq:Htheta-gluing-low}
  \norm{\tilde u}_{H^\theta(-\tau,\Tend;V)}
  \leq C \left(
  \norm{\psi}_{H^\theta(-\tau,0;V)}
  +\norm{u}_{H^\theta(0,\Tend;V)}\right).
\end{align}
\item If $1/2<\theta\le1$, then $\tilde u\in H^\theta(-\tau,\Tend;V)$ if and only if the one-sided $V$-traces agree:
\begin{align}\label{eq:Htheta-trace-match}
  \psi(0-)=u(0+)
  \quad \text{in }V.
\end{align}
When \eqref{eq:Htheta-trace-match} holds, the analogue of \eqref{eq:Htheta-gluing-low} is valid.
\item At $\theta=1/2$, the precise additional requirement is
\begin{align}\label{eq:Hhalf-cross-condition}
  \int_{-\tau}^{0}\int_0^\Tend
  \frac{\norm{u(t)-\psi(s)}_V^2}{(t-s)^2}
  \,\dd t\,\dd s<\infty.
\end{align}
Together with the separate $H^{1/2}$-regularity of $u$ and $\psi$, condition \eqref{eq:Hhalf-cross-condition} is equivalent to $\tilde u\in H^{1/2}(-\tau,\Tend;V)$.
\end{enumerate}
In any case, membership on the bounded interval implies the existence of an extension $G\in H^\theta(\R;V)$, and the extension may be chosen with norm bounded by a constant times the interval norm.
\end{proposition}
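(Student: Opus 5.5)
I will work with the intrinsic Gagliardo--Slobodeckij description of $H^\theta(I;V)$ for $0<\theta<1$ on bounded intervals $I$. For vector-valued functions in a Hilbert space $V$, this description is equivalent to the restriction-space norm, since both are interpolation norms between $L^2(I;V)$ and $H^1(I;V)$. With $I=(-\tau,\Tend)$, the seminorm $\iint_{I\times I}\norm{\tilde u(t)-\tilde u(s)}_V^2\abs{t-s}^{-1-2\theta}\dd s\dd t$ splits into three parts: the block $(-\tau,0)^2$, the block $(0,\Tend)^2$, and twice the cross term over $(-\tau,0)\times(0,\Tend)$. The first two blocks are controlled by the separate norms of $\psi$ and $u$. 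So the whole question is whether the cross term is finite.

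\textbf{Cases (1) and (3).} For case (3) the cross kernel is exactly $(t-s)^{-2}$, so \eqref{eq:Hhalf-cross-condition} is literally the finiteness of the cross term. The claimed equivalence then follows from the splitting.

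For case (1) I will use the triangle inequality $\norm{u(t)-\psi(s)}_V^2\le 2\norm{u(t)}_V^2+2\norm{\psi(s)}_V^2$. Integrating $(t-s)^{-1-2\theta}$ in $s\in(-\tau,0)$ for fixed $t>0$ gives at most $C t^{-2\theta}$. The cross term is therefore bounded by $\int_0^\Tend t^{-2\theta}\norm{u(t)}_V^2\dd t$ plus the symmetric integral for $\psi$. Since $2\theta<1$, the fractional Hardy inequality bounds this weighted integral by $C\norm{u}_{H^\theta(0,\Tend;V)}^2$. This inequality is scalar in nature and transfers verbatim to Hilbert-valued functions, or it can be obtained by extending $u$ by zero, which is bounded on $H^\theta$ for $\theta<1/2$. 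This yields \eqref{eq:Htheta-gluing-low}.

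\textbf{Case (2).} For $\theta>1/2$, functions in $H^\theta$ are continuous into $V$ (Sobolev embedding, valid vector-valued), so one-sided traces exist. Necessity of \eqref{eq:Htheta-trace-match} is then immediate: if $\tilde u\in H^\theta(-\tau,\Tend;V)$, it is continuous at $0$, so the traces agree.

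For sufficiency with $\theta<1$, I will subtract a common constant. Set $c:=u(0+)=\psi(0-)$, $\psi_0:=\psi-c$ and $u_0:=u-c$; these have zero traces. Then bound the cross term by $\int_0^\Tend t^{-2\theta}\norm{u_0(t)}_V^2\dd t$ as before. The Hardy inequality for $\theta\in(1/2,1)$ holds precisely for functions vanishing at the endpoint, so this is at most $C\norm{u_0}_{H^\theta}^2$, and similarly for $\psi_0$. Finally, $\norm{c}_V$ is bounded by the trace inequality, which recovers the analogue of \eqref{eq:Htheta-gluing-low}.

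At $\theta=1$ I argue directly: a $W^{1,2}$ function on each side with matching values is absolutely continuous across $0$. Its distributional derivative is the concatenation of the two derivatives, with no Dirac mass at $0$, so $\tilde u\in H^1$ with additive norms.

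\textbf{Extension and main obstacle.} The last assertion is the definition of the restriction space, taking a near-optimal extension. Alternatively, one can use a bounded reflection extension operator from $H^\theta(I;V)$ to $H^\theta(\R;V)$ obtained by interpolating between $L^2$ and $H^1$.

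The main obstacle is the Hardy-type inequality $\int_0^\Tend t^{-2\theta}\norm{w(t)}_V^2\dd t\le C\norm{w}_{H^\theta}^2$, needed for $\theta<1/2$ and for zero-trace $w$ when $\theta>1/2$. I will try first to reduce it to the scalar case. One route is to expand $w$ in an orthonormal basis of $V$ and sum, which works because both sides are quadratic and diagonal in the basis. The other is to cite the scalar fractional Hardy inequality (Lions--Magenes / Grisvard) and apply it to each coefficient.
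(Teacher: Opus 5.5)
Your proposal is correct and follows essentially the same route as the paper. Both arguments use the Slobodeckij splitting into same-side blocks plus the cross-interface integral, subtraction of the common trace followed by the vanishing-trace fractional Hardy inequality for $1/2<\theta<1$, continuity for necessity, the $H^1$ gluing at $\theta=1$, the literal identification of the cross term at $\theta=1/2$, and an orthonormal-basis reduction to scalar estimates. The only minor deviations are two. In case (1) you bound the cross term with the trace-free Hardy inequality for $2\theta<1$, whereas the paper writes $\tilde u$ as a sum of two zero extensions; these are equivalent standard facts, and you mention the zero-extension route yourself. You also read off the final extension from the restriction-space definition rather than from a bounded extension operator.
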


\begin{proof}
For $0<\theta<1$, the Bessel-potential norm on a bounded interval is equivalent to the Hilbert-valued Slobodeckij norm.  We use the standard one-dimensional zero-extension, trace, and fractional Hardy results; see \cite{DiNezzaPalatucciValdinoci2012} and \cite[Theorems~3.33, 3.37, and 3.40]{McLean2000}.  Their Hilbert-valued versions follow by applying the scalar estimates to the coefficients with respect to an orthonormal basis of $V$ and then summing.  The Slobodeckij seminorm of the joined function splits into the two same-side seminorms and twice the cross-interface integral
\begin{align}\label{eq:cross-general}
  \int_{-\tau}^{0}\int_0^\Tend
  \frac{\norm{u(t)-\psi(s)}_V^2}
       {(t-s)^{1+2\theta}}
  \,\dd t\,\dd s.
\end{align}

Suppose first that $0<\theta<1/2$.  Extension by zero across an endpoint is bounded on $H^\theta$.  Extend $\psi$ by zero to $(0,\Tend)$ and extend $u$ by zero to $(-\tau,0)$.  The sum of these two extensions is $\tilde u$, and hence
\begin{align*}
  \norm{\tilde u}_{H^\theta(-\tau,\Tend;V)}
  \leq C\left(
  \norm{\psi}_{H^\theta(-\tau,0;V)}
  +\norm{u}_{H^\theta(0,\Tend;V)}
  \right).
\end{align*}
This proves the first assertion.

Let $1/2<\theta<1$.  The one-sided $V$-traces of $\psi$ and $u$ exist.  Suppose first that they agree, and denote their common value by $a\in V$.  Then,
\begin{align*}
  \norm{u(t)-\psi(s)}_V^2
  \leq
  2\norm{u(t)-a}_V^2
  +2\norm{\psi(s)-a}_V^2.
\end{align*}
Furthermore,
\begin{align*}
  \int_{-\tau}^{0}
  \frac{\dd s}{(t-s)^{1+2\theta}}
  &\le \frac{1}{2\theta}t^{-2\theta},
  \quad t>0,\\
  \int_0^\Tend
  \frac{\dd t}{(t-s)^{1+2\theta}}
  &\le \frac{1}{2\theta}\abs{s}^{-2\theta},
  \quad s<0.
\end{align*}
Consequently, \eqref{eq:cross-general} is bounded by a constant times
\begin{align*}
  \int_0^\Tend
  t^{-2\theta}\norm{u(t)-a}_V^2\,\dd t
  +
  \int_{-\tau}^{0}
  \abs{s}^{-2\theta}\norm{\psi(s)-a}_V^2\,\dd s.
\end{align*}
The fractional Hardy inequality for functions with vanishing trace gives
\begin{align*}
  \int_0^\Tend
  t^{-2\theta}\norm{u(t)-a}_V^2\,\dd t
  &\le
  C\norm{u}_{H^\theta(0,\Tend;V)}^2,\\
  \int_{-\tau}^{0}
  \abs{s}^{-2\theta}\norm{\psi(s)-a}_V^2\,\dd s
  &\le
  C\norm{\psi}_{H^\theta(-\tau,0;V)}^2.
\end{align*}
Thus, the cross-interface integral is finite, and the analogue of \eqref{eq:Htheta-gluing-low} follows.  Conversely, if $\tilde u\in H^\theta(-\tau,\Tend;V)$, then its $V$-trace at $t=0$ is well defined.  The one-sided traces of its restrictions to $(-\tau,0)$ and $(0,\Tend)$ must both equal this trace.  Therefore,
\begin{align*}
  \psi(0-)=u(0+)
  \quad\text{in }V.
\end{align*}
This proves the second assertion for $1/2<\theta<1$.

For $\theta=1$, the conclusion is the standard $H^1$ gluing result.  Indeed, the distributional derivative of the piecewise $H^1$ function contains an interface Dirac mass precisely when the two traces differ.  Thus, $\tilde u\in H^1(-\tau,\Tend;V)$ if and only if the traces agree; in that case, the corresponding norm estimate follows from the piecewise $H^1$ estimates.

Let $\theta=1/2$.  In the decomposition of the Slobodeckij seminorm, the two same-side terms are finite by the assumed separate $H^{1/2}$-regularity.  The remaining term is twice \eqref{eq:Hhalf-cross-condition}.  Therefore, \eqref{eq:Hhalf-cross-condition} is finite if and only if
\begin{align*}
  \tilde u\in H^{1/2}(-\tau,\Tend;V),
\end{align*}
which proves the third assertion.

Bounded intervals admit bounded extension operators for Hilbert-valued $H^\theta$ spaces.  Therefore, in any case, $\tilde u$ admits an extension $G\in H^\theta(\R;V)$ satisfying
\begin{align*}
  \norm{G}_{H^\theta(\R;V)}
  \le
  C\norm{\tilde u}_{H^\theta(-\tau,\Tend;V)}.
\end{align*}
All constants above depend only on $\theta$, $\tau$, and $\Tend$.
\end{proof}

\begin{lemma}[Differentiation of a measure-valued history]\label[lemma]{lem:history-differentiation}
Let $\sigma\in\M([0,\tau])$ and let $G\in H^1(-\tau,\Tend;V)$.  Then,
\begin{align*}
  \K_\sigma G\in H^1(0,\Tend;V^*)
\end{align*}
and
\begin{align}\label{eq:history-derivative}
  \partial_t(\K_\sigma G)
  =\K_\sigma(\partial_tG)
  \quad \text{in }L^2(0,\Tend;V^*).
\end{align}
Furthermore,
\begin{align}\label{eq:history-H1-bound}
  \norm{\K_\sigma G}_{H^1(0,\Tend;V^*)}
  \le
  \Lambda_1\norm{\sigma}_{\M([0,\tau])}
  \norm{G}_{H^1(-\tau,\Tend;V)}.
\end{align}
\end{lemma}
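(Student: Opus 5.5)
Note first that $G$ is only given on $(-\tau,\Tend)$. As in \Cref{lem:measure-convolution}, I read $\K_\sigma G$ on $(0,\Tend)$ as using only the values $G(t-s)$ with $t-s\in(-\tau,\Tend)$, since $t\in(0,\Tend)$ and $s\in[0,\tau]$.

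The plan is to verify the weak-derivative identity directly against scalar test functions, using Fubini to move the time derivative onto the translates. Fix $\varphi\in C_c^\infty(0,\Tend)$ and $v\in V$. I have to show
\begin{align*}
  \int_0^\Tend \pair{(\K_\sigma G)(t)}{v}\varphi'(t)\,\dd t
  =-\int_0^\Tend \pair{(\K_\sigma\partial_tG)(t)}{v}\varphi(t)\,\dd t .
\end{align*}
Because $V$ is separable, identifying the $V^*$-valued weak derivative through such pairings is enough; testing with $v\in V$ suffices since $V^*$ is the dual.

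Writing out the definition, the left side is
\begin{align*}
  \int_{[0,\tau]}\int_0^\Tend a_1\bigl(G(t-s),v\bigr)\varphi'(t)\,\dd t\,\dd\sigma(s).
\end{align*}
The interchange of integrals is legitimate because the integrand is dominated by $\Lambda_1\norm{v}_V\norm{G(t-s)}_V\abs{\varphi'(t)}$. This bound is integrable for $\abs{\sigma}\otimes\dd t$ by Cauchy--Schwarz, since $\norm{G}_{L^2(-\tau,\Tend;V)}<\infty$ and $\abs{\sigma}$ is finite. Joint measurability of $(s,t)\mapsto G(t-s)$ holds for a strongly measurable representative, and the result does not depend on that choice, exactly as in \Cref{lem:measure-convolution}.

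For each fixed $s$, the function $t\mapsto a_1(G(t-s),v)$ lies in $H^1$ on $(s-\tau,s+\Tend)\supset(0,\Tend)$, because $w\mapsto a_1(w,v)$ is a bounded functional on $V$. Its weak derivative is $a_1(\partial_tG(t-s),v)$. Integration by parts against $\varphi$, whose support lies in $(0,\Tend)$, therefore gives
\begin{align*}
  \int_0^\Tend a_1\bigl(G(t-s),v\bigr)\varphi'(t)\,\dd t
  =-\int_0^\Tend a_1\bigl(\partial_tG(t-s),v\bigr)\varphi(t)\,\dd t .
\end{align*}
There is no boundary term and no extension across $-\tau$ is needed: for $t\in\operatorname{supp}\varphi$ and $s\in[0,\tau]$ the argument $t-s$ stays inside $(-\tau,\Tend)$, up to the endpoint set, which has measure zero in $t$. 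Integrating in $\sigma$ and applying Fubini again, justified by the same domination with $\partial_tG\in L^2$, gives the right side. This proves \eqref{eq:history-derivative}.

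The bound \eqref{eq:history-H1-bound} then follows by applying \eqref{eq:measure-convolution-bound} to $G$ and to $\partial_tG$ and adding the squares. This gives
\begin{align*}
  \norm{\K_\sigma G}_{H^1(0,\Tend;V^*)}^2
  \le\Lambda_1^2\norm{\sigma}_{\M([0,\tau])}^2
  \Bigl(\norm{G}_{L^2(-\tau,\Tend;V)}^2+\norm{\partial_tG}_{L^2(-\tau,\Tend;V)}^2\Bigr).
\end{align*}

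The only delicate point is the measurability and integrability needed for Fubini with a signed measure. I would handle it by splitting $\sigma=\sigma^+-\sigma^-$ (Jordan decomposition) and working with $\abs{\sigma}$.
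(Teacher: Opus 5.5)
Your proposal is correct and follows essentially the same route as the paper. You test against $\varphi v$ with $\varphi\in C_c^\infty(0,\Tend)$ and $v\in V$, justify Fubini by the Cauchy--Schwarz domination, and integrate by parts for each fixed $s$ using $G(\cdot-s)\in H^1(0,\Tend;V)$ with no boundary terms; you then obtain \eqref{eq:history-H1-bound} by applying \eqref{eq:measure-convolution-bound} to $G$ and $\partial_tG$ and adding the squares, as the paper does. Your observation that $t-s$ stays in $(-\tau,\Tend)$, so the zero extension plays no role, makes explicit a point the paper leaves implicit.
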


\begin{proof}
The $L^2(0,\Tend;V^*)$ bound follows from \Cref{lem:measure-convolution}.  Let $v\in V$ and
$\varphi\in C_c^\infty(0,\Tend)$.  The boundedness of $a_1$, the finite variation of $\sigma$, and Cauchy--Schwarz justify Fubini's theorem.  Indeed,
\begin{align*}
  &\int_{[0,\tau]}\int_0^\Tend
  \abs{a_1(G(t-s),v)\varphi'(t)}
  \,\dd t\,\dd\abs{\sigma}(s)\\
  &\quad\le
  \Lambda_1\norm{v}_V\norm{\sigma}_{\M([0,\tau])}
  \norm{G}_{L^2(-\tau,\Tend;V)}
  \norm{\varphi'}_{L^2(0,\Tend)}<\infty.
\end{align*}
Because $t\mapsto G(t-s)$ belongs to $H^1(0,\Tend;V)$ for any fixed $s\in[0,\tau]$, integration by parts gives
\begin{align*}
 -\int_0^\Tend
  \pair{(\K_\sigma G)(t)}{v}\varphi'(t)\,\dd t
 &= -\int_{[0,\tau]}\int_0^\Tend
    a_1(G(t-s),v)\varphi'(t)\,\dd t\,\dd\sigma(s)\\
 &= \int_{[0,\tau]}\int_0^\Tend
    a_1(\partial_tG(t-s),v)\varphi(t)\,\dd t\,\dd\sigma(s)\\
 &= \int_0^\Tend
    \pair{(\K_\sigma\partial_tG)(t)}{v}\varphi(t)\,\dd t.
\end{align*}
There are no boundary terms because $\varphi$ is compactly supported in $(0,\Tend)$.  Since $v\in V$ is arbitrary and $\K_\sigma(\partial_tG)\in L^2(0,\Tend;V^*)$ by \Cref{lem:measure-convolution}, this proves \eqref{eq:history-derivative} as an identity of $V^*$-valued distributions.  Consequently, $\K_\sigma G\in H^1(0,\Tend;V^*)$.  Finally,
\begin{align*}
  \norm{\K_\sigma G}_{H^1(0,\Tend;V^*)}^2
  &=
  \norm{\K_\sigma G}_{L^2(0,\Tend;V^*)}^2
  +
  \norm{\K_\sigma(\partial_tG)}_{L^2(0,\Tend;V^*)}^2\\
  &\le
  \Lambda_1^2\norm{\sigma}_{\M([0,\tau])}^2
  \norm{G}_{H^1(-\tau,\Tend;V)}^2.
\end{align*}
Taking square roots proves \eqref{eq:history-H1-bound}.
\end{proof}

\begin{proposition}[A data criterion for an $H^1$ joined trajectory]\label[proposition]{prop:data-H1-criterion}
Let $\mu\in\M^0([0,\tau])$, and assume
\begin{align}\label{eq:H1-data}
\begin{gathered}
  u_0\in V,
  \quad
  f\in H^1(0,\Tend;V^*),
  \quad
  \psi\in H^1(-\tau,0;V),\\
  \psi(0-)=u_0\quad\text{in }V.
\end{gathered}
\end{align}
Because
\begin{align*}
H^1(0,\Tend;V^*)\hookrightarrow\mathcal C([0,\Tend];V^*), \quad H^1(-\tau,0;V)\hookrightarrow\mathcal C([-\tau,0];V),	
\end{align*}
the values $f(0)$ and $\psi(-s)$ are well defined, and we set
\begin{align}\label{eq:initial-derivative}
  w_0
  :=f(0)-A_0u_0
    -\int_{[0,\tau]}A_1\psi(-s)\,\dd\mu(s)
  \in V^*.
\end{align}
Suppose, in addition, that $w_0\in H$.  Then, the solution $u_\mu$ of \eqref{eq:problem} satisfies
\begin{align}\label{eq:H1-solution-regularity}
  u_\mu\in H^1(0,\Tend;V),
  \quad
  \partial_tu_\mu\in\W(0,\Tend).
\end{align}
Consequently, after assigning the common trace $u_0$ at $t=0$,
\begin{align*}
  \tilde u_\mu\in H^1(-\tau,\Tend;V),
\end{align*}
and the joined trajectory admits an extension $G\in H^1(\R;V)$.  More precisely, for any $\beta\ge0$ with $q_{\mu,\beta}<1$,
\begin{align}\label{eq:time-derivative-bound}
  \norm{\partial_tu_\mu}_{L^2_\beta(0,\Tend;V)}
  \le \frac{1}{1-q_{\mu,\beta}}
  \Biggl(&\alpha_0^{-1/2}\norm{w_0}_H
  +\alpha_0^{-1}\norm{\partial_tf}_{L^2_\beta(0,\Tend;V^*)}\notag\\
  &+\alpha_0^{-1}\Lambda_1\norm{\mu}_{\M([0,\tau])}
  \norm{\partial_t\psi}_{L^2(-\tau,0;V)}\Biggr).
\end{align}
Together with \eqref{eq:weighted-u-bound}, the norm equivalence of \Cref{lem:weighted-equivalence}, and a bounded extension operator, this bounds the norm $\norm{G}_{H^1(\R;V)}$ in terms of the data, $\mu$, $\beta$, $\tau$, $\Tend$, and the
structural constants.
\end{proposition}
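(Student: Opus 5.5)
The plan is to differentiate the equation formally in time and identify $w:=\partial_tu_\mu$ as the solution of a delay problem of the same type \eqref{eq:problem}, with data $(w_0,\partial_tf,\partial_t\psi)$. Concretely, we let $w\in\W(0,\Tend)$ be the unique solution, given by \Cref{thm:wellposedness} with the same $\mu$ and $\beta$, of
\begin{align*}
  \partial_tw+A_0w+\K_\mu\tilde w=\partial_tf,
  \quad w(0)=w_0\in H,
\end{align*}
where $\tilde w$ joins $w$ to the history $\partial_t\psi\in L^2(-\tau,0;V)$. Estimate \eqref{eq:weighted-u-bound} applied to this problem is exactly \eqref{eq:time-derivative-bound}, once $w=\partial_tu_\mu$ is established. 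So the work consists in proving this identification.

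To identify $w$, we set
\begin{align*}
  v(t):=u_0+\int_0^tw(r)\,\dd r,
\end{align*}
so that $v\in H^1(0,\Tend;V)$ with $\partial_tv=w$. Because $\psi(0-)=u_0$, the joined function $\tilde v$ (history $\psi$) lies in $H^1(-\tau,\Tend;V)$ and satisfies $\partial_t\tilde v=\tilde w$. By \Cref{lem:history-differentiation}, $\partial_t(\K_\mu\tilde v)=\K_\mu\tilde w$. The set $\{\mu\text{-a.e. }s\}$ poses no issue since $\mu$ charges no atom at $0$, although this is not even needed here. Hence the residual
\begin{align*}
  z:=\partial_tv+A_0v+\K_\mu\tilde v-f=w+A_0v+\K_\mu\tilde v-f
\end{align*}
lies in $H^1(0,\Tend;V^*)$. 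Its derivative is
\begin{align*}
  \partial_tz=\partial_tw+A_0w+\K_\mu\tilde w-\partial_tf=0,
\end{align*}
and its initial value is
\begin{align*}
  z(0)=w_0+A_0u_0+\int_{[0,\tau]}A_1\psi(-s)\,\dd\mu(s)-f(0)=0
\end{align*}
by \eqref{eq:initial-derivative}. Therefore $z\equiv0$, so $v$ solves \eqref{eq:problem} with $v(0)=u_0$. By uniqueness in \Cref{thm:wellposedness}, $v=u_\mu$, and thus $\partial_tu_\mu=w\in\W(0,\Tend)$, which gives \eqref{eq:H1-solution-regularity}.

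The main obstacle is evaluating $(\K_\mu\tilde v)(0)$, that is, justifying that the continuous representative of $\K_\mu\tilde v\in H^1(0,\Tend;V^*)$ at $t=0$ equals $\int A_1\psi(-s)\,\dd\mu(s)$. I would handle it as follows. Since $\tilde v\in H^1(-\tau,\Tend;V)\hookrightarrow\mathcal C([-\tau,\Tend];V)$, the map $t\mapsto\int A_1\tilde v(t-s)\,\dd\mu(s)$ is continuous into $V^*$ by dominated convergence, and it agrees a.e. with $\K_\mu\tilde v$. Its value at $0$ is $\int A_1\tilde v(-s)\,\dd\mu(s)$, and $\tilde v(-s)=\psi(-s)$ for $s>0$. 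At $s=0$ we use the continuity of $\tilde v$, namely $\tilde v(0)=u_0=\psi(0-)$, so no atom issue arises. The remaining parts are routine: $u_\mu\in H^1(0,\Tend;V)$ together with the trace match $\psi(0-)=u_0$ gives $\tilde u_\mu\in H^1(-\tau,\Tend;V)$ by \Cref{prop:Htheta-gluing}(2) with $\theta=1$. A bounded extension operator then yields $G$. Finally, the norm of $G$ is bounded by combining \eqref{eq:weighted-u-bound}, \eqref{eq:time-derivative-bound}, \Cref{lem:weighted-equivalence} (at the cost of a factor $e^{\beta\Tend}$), and the $H^1$ norm of $\psi$.
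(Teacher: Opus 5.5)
Your proposal is correct and follows the paper's proof step for step. You solve the differentiated problem for $w$, integrate to $v=u_0+\int_0^t w$, and show that the residual $w+A_0v+\K_\mu\tilde v-f$ has zero derivative in $H^1(0,\Tend;V^*)$ and vanishes at $t=0$ by continuity of $\K_\mu\tilde v$; uniqueness then gives $v=u_\mu$. The only cosmetic difference is that you invoke the gluing proposition for $\tilde u_\mu\in H^1(-\tau,\Tend;V)$, whereas this already follows from $\tilde u_\mu=\tilde v$.
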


\begin{proof}
Choose $\beta\ge0$ such that $q_{\mu,\beta}<1$; such a choice exists by \Cref{thm:wellposedness}.  We apply \Cref{thm:wellposedness} to the differentiated data $(w_0,\partial_tf,\partial_t\psi)$.  This gives a unique $w\in\W(0,\Tend)$ satisfying
\begin{align}\label{eq:differentiated-problem}
  \partial_tw+A_0w+\K_\mu\widetilde w=\partial_tf,
  \quad
  w(0)=w_0,
\end{align}
where $\widetilde w=\partial_t\psi$ on $(-\tau,0)$ and $\widetilde w=w$ on $(0,\Tend)$.  Estimate
\eqref{eq:time-derivative-bound} is \eqref{eq:weighted-u-bound} applied to \eqref{eq:differentiated-problem}.

We define
\begin{align*}
U(t):=u_0+\int_0^t w(r)\,\dd r.
\end{align*}
Because $w\in L^2(0,\Tend;V)$, $U\in H^1(0,\Tend;V)$ and $\partial_tU=w$.  Let $\tilde u$ be the function obtained by joining $U$ to $\psi$ on $(-\tau,0)$.  The trace compatibility in \eqref{eq:H1-data} gives
\begin{align}\label{eq:joined-U-H1}
  \tilde u\in H^1(-\tau,\Tend;V),
  \quad
  \partial_t\tilde u = \tilde w.
\end{align}
By \Cref{lem:history-differentiation}, $\K_\mu\tilde u\in H^1(0,\Tend;V^*)$ and
\begin{align*}
  \partial_t(\K_\mu\tilde u)
  =\K_\mu \tilde w.
\end{align*}
Because $w\in\W(0,\Tend)$ and $U\in H^1(0,\Tend;V)$, both $w$ and $A_0U$ belong to $H^1(0,\Tend;V^*)$.  The same holds for $\K_\mu\tilde u$ and $f$.  Thus, the function
\begin{align*}
  \mathcal F
  :=w+A_0U+\K_\mu\tilde u-f
\end{align*}
belongs to $H^1(0,\Tend;V^*)$.  Using \eqref{eq:differentiated-problem} and \eqref{eq:joined-U-H1}, we obtain
\begin{align*}
  \partial_t\mathcal F
  =\partial_tw+A_0w+\K_\mu \tilde w-\partial_tf
  =0
\end{align*}
in $L^2(0,\Tend;V^*)$.  The function $\tilde u$ is continuous on the compact interval $[-\tau,\Tend]$ with values in $V$.  Therefore, finite variation of $\mu$ and dominated convergence imply that $\K_\mu\tilde u$ is continuous with values in $V^*$ and that
\begin{align*}
  (\K_\mu\tilde u)(0)
  =\int_{[0,\tau]}A_1\psi(-s)\,\dd\mu(s).
\end{align*}
Because $U(0)=u_0$ and $w(0)=w_0$ in $H\hookrightarrow V^*$,
\begin{align*}
  \mathcal F(0)
  &=
  w_0+A_0u_0
  +\int_{[0,\tau]}A_1\psi(-s)\,\dd\mu(s)-f(0)
  =0
\end{align*}
by \eqref{eq:initial-derivative}.  Thus, $\mathcal F=0$ on $[0,\Tend]$, so $U$ solves \eqref{eq:problem}.  Uniqueness in \Cref{thm:wellposedness} yields $U=u_\mu$.  Consequently, $\partial_tu_\mu=w\in\W(0,\Tend)$ and \eqref{eq:H1-solution-regularity} follows.

Finally, the standard bounded extension operator on an interval gives an extension of the joined trajectory in $H^1(\R;V)$.  The asserted norm dependence follows from \eqref{eq:weighted-u-bound}, \eqref{eq:time-derivative-bound}, and \Cref{lem:weighted-equivalence}.
\end{proof}

\begin{remark}[How to verify the rate assumption in practice]\label[remark]{rem:Htheta-extension}
The rate theorem requires regularity of the \emph{joined} trajectory, not separate estimates for the past and future pieces.  \Cref{prop:Htheta-gluing} gives the exact gluing test, while \Cref{prop:data-H1-criterion} provides a sufficient condition stated directly in terms of the equation data.  In particular, a jump at $t=0$ is permitted when $0<\theta<1/2$, whereas for $\theta>1/2$ the history and solution must have the same $V$-trace.  The borderline case $\theta=1/2$ is governed by \eqref{eq:Hhalf-cross-condition} and cannot be reduced to a pointwise matching condition.  By contrast, the qualitative transport-modulus theorem requires only an $L^2(\R;V)$ extension.  The zero extension is always available at that level, so no interface compatibility is imposed.
\end{remark}

Recall the equal-mass transport cost $W_1$ from \eqref{eq:W1}.

\begin{theorem}[Wasserstein kernel-stability rate]\label[theorem]{thm:wasserstein-rate}
Let $\mu,\nu\in\M_+^0([0,\tau])$ have the same mass $M>0$, and choose $\beta\ge0$ with $d_{\mu,\beta}>0$.  Assume that the joined reference trajectory $\tilde u_\nu$ admits an extension $G\in H^\theta(\R;V)$ for some $0<\theta\le1$.  Then,
\begin{align}\label{eq:wasserstein-rate}
  \norm{u_\mu-u_\nu}_{L^2_\beta(0,\Tend;V)}
  \le
  \frac{C_\theta\Lambda_1M^{1-\theta}}
       {d_{\mu,\beta}}
  W_1(\mu,\nu)^\theta
  \norm{G}_{H^\theta(\R;V)}.
\end{align}
Furthermore,
\begin{align}\label{eq:wasserstein-rate-full}
  &\norm{u_\mu-u_\nu}_{\W(0,\Tend)}
  +\norm{u_\mu-u_\nu}_{\mathcal{C}([0,\Tend];H)}
  \\
  &\quad\le
  e^{\beta\Tend}
  \frac{C_\theta\Lambda_1(1+\sqrt{\alpha_0}+\alpha_0+\Lambda_0)
        M^{1-\theta}}
       {d_{\mu,\beta}}
  W_1(\mu,\nu)^\theta
  \norm{G}_{H^\theta(\R;V)}.
  \notag
\end{align}
\end{theorem}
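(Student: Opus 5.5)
The plan is to combine the transport-modulus estimate of \Cref{thm:transport-modulus} with the fractional translation bound of \Cref{lem:translation}, and then to use Jensen's inequality on the transport plan to convert the $\theta$-th moment of the displacement into $W_1^\theta$ with the mass factor $M^{1-\theta}$.

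First, by \Cref{lem:translation} with $r=0$ and shift $h$,
\begin{align*}
  \omega_G(\rho)\le C_\theta\rho^\theta\norm{G}_{H^\theta(\R;V)},\qquad \rho\ge0,
\end{align*}
because the supremum over $\abs h\le\rho$ of $C_\theta\abs h^\theta$ equals $C_\theta\rho^\theta$. Fix any $\pi\in\Pi(\mu,\nu)$. Then
\begin{align*}
  \mathfrak C_G(\mu,\nu)\le C_\theta\norm{G}_{H^\theta(\R;V)}\int_{[0,\tau]^2}\abs{s-r}^\theta\,\dd\pi(s,r).
\end{align*}

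The key step is to bound this moment. Since $\pi$ has total mass $M$, the measure $M^{-1}\pi$ is a probability measure. Because $x\mapsto x^\theta$ is concave for $0<\theta\le1$, Jensen's inequality (equivalently, H\"older with exponents $1/\theta$ and $1/(1-\theta)$) gives
\begin{align*}
  \int\abs{s-r}^\theta\,\dd\pi\le M^{1-\theta}\Bigl(\int\abs{s-r}\,\dd\pi\Bigr)^\theta .
\end{align*}
Taking the infimum over $\pi$ and using monotonicity of $x\mapsto x^\theta$ yields
\begin{align*}
  \mathfrak C_G(\mu,\nu)\le C_\theta M^{1-\theta}W_1(\mu,\nu)^\theta\norm{G}_{H^\theta(\R;V)}.
\end{align*}
No minimiser needs to exist: one bounds the integral for each $\pi$, then passes to infima on both sides, because the right-hand side is continuous and increasing in $\int\abs{s-r}\,\dd\pi$.

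Finally, substituting this bound into \eqref{eq:transport-modulus-stability} gives \eqref{eq:wasserstein-rate}, and substituting it into \eqref{eq:transport-modulus-full} gives \eqref{eq:wasserstein-rate-full}. The hypotheses of \Cref{thm:transport-modulus} are met, since $G\in H^\theta(\R;V)\subset L^2(\R;V)$. The only points to check are routine: measurability of $\omega_G$, which was already noted, and the case $\theta=1$, where Jensen is trivial. I expect no real obstacle beyond getting the mass normalisation in the Jensen step right.
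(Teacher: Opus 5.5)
Your proposal is correct and follows essentially the paper's argument: the translation bound of \Cref{lem:translation}, Jensen's inequality for $\pi/M$ to obtain $M^{1-\theta}W_1(\mu,\nu)^\theta$, the infimum over plans, and then the residual stability estimates. The only cosmetic difference is that the paper applies \Cref{lem:translation} directly inside the residual identity \eqref{eq:transport-residual-identity}, rather than first bounding $\omega_G$ and then invoking \Cref{thm:transport-modulus}; both routes give the same constants.
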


\begin{proof}
We use the continuous $Y$-valued map $F$ from the proof of \Cref{thm:transport-modulus}. For any $\pi\in\Pi(\mu,\nu)$, identity \eqref{eq:transport-residual-identity}, Minkowski's inequality, boundedness of $A_1$, and \Cref{lem:translation} give
\begin{align*}
  \norm{\K_{\mu-\nu}\tilde u_\nu}_{L^2(0,\Tend;V^*)}
  &\leq
  C_\theta\Lambda_1
  \norm{G}_{H^\theta(\R;V)}
  \int_{[0,\tau]^2}
  \abs{s-r}^\theta\,\dd\pi(s,r).
\end{align*}
Because $\pi([0,\tau]^2)=M$, Jensen's inequality for the probability measure $\pi/M$ yields
\begin{align*}
  \int_{[0,\tau]^2}
  \abs{s-r}^\theta\,\dd\pi(s,r)
  &\leq
  M^{1-\theta}
  \left(
    \int_{[0,\tau]^2}
    \abs{s-r}\,\dd\pi(s,r)
  \right)^\theta.
\end{align*}
Consequently, for any $\pi\in\Pi(\mu,\nu)$,
\begin{align*}
  \norm{\K_{\mu-\nu}\tilde u_\nu}_{L^2(0,\Tend;V^*)}
  &\le
  C_\theta\Lambda_1M^{1-\theta}
  \norm{G}_{H^\theta(\R;V)}
  \left(
    \int_{[0,\tau]^2}
    \abs{s-r}\,\dd\pi(s,r)
  \right)^\theta.
\end{align*}
Because $x\mapsto x^\theta$ is non-decreasing on $[0,\infty)$, taking the infimum over $\pi\in\Pi(\mu,\nu)$ and using \eqref{eq:W1} give
\begin{align*}
  \norm{\K_{\mu-\nu}\tilde u_\nu}_{L^2(0,\Tend;V^*)}
  &\leq
  C_\theta\Lambda_1M^{1-\theta}
  W_1(\mu,\nu)^\theta
  \norm{G}_{H^\theta(\R;V)}.
\end{align*}
Because $\beta\ge0$, the weighted residual norm is bounded by the unweighted one.  Applying \eqref{eq:residual-stability} and \eqref{eq:residual-full-unweighted} proves \eqref{eq:wasserstein-rate} and \eqref{eq:wasserstein-rate-full}, respectively.
\end{proof}

\begin{remark}[A data-driven Lipschitz rate for delayed diffusion]
\label[remark]{rem:pde-data-W1}
In the delayed-diffusion setting of \Cref{sec:pde-realisation}, $\pair{A_iz}{v}=\pair{-\nabla\!\cdot(B_i\nabla z)}{v}$ for $z,v\in H_0^1(\Omega)$.  Let $\nu\in\M_+^0([0,\tau])$, let $u_0\in H_0^1(\Omega)$, $f\in H^1(0,\Tend;H^{-1}(\Omega))$, and $\psi\in H^1(-\tau,0;H_0^1(\Omega))$ with $\psi(0-)=u_0$, and suppose that
\begin{align}\label{eq:pde-initial-derivative}
w_0:=f(0) +\nabla\!\cdot(B_0\nabla u_0) +\int_{[0,\tau]} \nabla\!\cdot\left(B_1\nabla\psi(-s)\right)\,\dd\nu(s),
\end{align}
understood a priori in $H^{-1}(\Omega)$, belongs to $L^2(\Omega)$.  Then, \Cref{prop:pde-assumptions} and \Cref{prop:data-H1-criterion} provide an extension $G\in H^1(\R;H_0^1(\Omega))$ of the joined reference trajectory, and \eqref{eq:wasserstein-rate-full} with $\theta=1$ gives
\begin{align*}
  \norm{u_\mu-u_\nu}_{\W(0,\Tend)}
  +\norm{u_\mu-u_\nu}_{\mathcal{C}([0,\Tend];L^2(\Omega))}
  \le C\,W_1(\mu,\nu)
\end{align*}
for any $\mu\in\M_+^0([0,\tau])$ of the same mass as $\nu$, provided that $\beta\ge0$ is chosen with $\lambda_0-\norm{B_1}_{L^\infty(\Omega;\mathcal L(\R^d))}\kappa_\mu(\beta)>0$. Here, $C$ depends on $\lambda_0$, the coefficient bounds, $\Tend$, this chosen $\beta$ and its positive stability margin, the common mass, and $\norm{G}_{H^1(\R;H_0^1(\Omega))}$.
\end{remark}


\subsection{Distributed-to-discrete concentration}
We apply the transport estimates to three standard kernel approximations: concentration of a distributed kernel to a single delay, motion of finitely many delay atoms, and finite atomic quadrature of a distributed law.  The key point is separation from the origin.  At a fixed positive lag, one exponential weight provides a common stability margin for all sufficiently concentrated kernels, while explicit transport plans determine the error scale.  We begin with an absolutely continuous approximation of a single discrete delay.

\begin{corollary}[Concentration to a discrete delay]\label[corollary]{cor:delay-concentration}
Let $\tau_d\in(0,\tau)$, $m>0$, and let $\eta\in L^1(-1,1)$ satisfy $\eta\ge0$ and $\int_{-1}^1\eta(z)\,\dd z=1$.  Extend $\eta$ by zero outside $(-1,1)$.  For $0<\varepsilon<\min\{\tau_d,\tau-\tau_d\}$, we define
\begin{align}\label{eq:mollified-delay}
  \dd\mu_\varepsilon(s)
  :=\frac{m}{\varepsilon}
  \eta\left(\frac{s-\tau_d}{\varepsilon}\right)\dd s,
  \quad
  \nu:=m\delta_{\tau_d}.
\end{align}
Let all solutions below correspond to the same data $(u_0,f,\psi)$.  Then, $\mu_\varepsilon\rightharpoonup\nu$ narrowly.  Let $G_0\in L^2(\R;V)$ be the zero extension of the joined delay trajectory $\tilde u_\nu$.  For any fixed $0<\varepsilon_0<\min\{\tau_d,\tau-\tau_d\}$ there is a constant $C_{\varepsilon_0}$, independent of $0<\varepsilon\le\varepsilon_0$, such that
\begin{align}\label{eq:delay-modulus-rate}
  \norm{u_{\mu_\varepsilon}-u_\nu}_{\W(0,\Tend)}
  +\norm{u_{\mu_\varepsilon}-u_\nu}_{\mathcal{C}([0,\Tend];H)}
  \le C_{\varepsilon_0}\,\omega_{G_0}(\varepsilon).
\end{align}
Consequently,
\begin{align}\label{eq:qualitative-delay-convergence}
  u_{\mu_\varepsilon}\longrightarrow u_\nu
  \quad\text{in }\W(0,\Tend) \cap \mathcal{C}([0,\Tend];H).
\end{align}
If $\tilde u_\nu$ admits an extension $G_\theta\in H^\theta(\R;V)$ for some
$0<\theta\le1$, then
\begin{align}\label{eq:delay-rate}
  \norm{u_{\mu_\varepsilon}-u_\nu}_{\W(0,\Tend)}
  +\norm{u_{\mu_\varepsilon}-u_\nu}_{\mathcal{C}([0,\Tend];H)}
  \le C_{\varepsilon_0,\theta}\varepsilon^\theta.
\end{align}
If the reference kernel $\nu$ and the common data satisfy the hypotheses of \Cref{prop:data-H1-criterion}, then \eqref{eq:delay-rate} holds with $\theta=1$.
\end{corollary}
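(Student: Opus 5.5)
The plan is to apply \Cref{thm:transport-modulus} and \Cref{thm:wasserstein-rate} with $\mu=\mu_\varepsilon$ as the kernel that sets the stability margin and $\nu=m\delta_{\tau_d}$ as the reference kernel. A single exponential weight $\beta$ must work for all $0<\varepsilon\le\varepsilon_0$. Both measures are non-negative with mass $m$, and both vanish near the origin. Indeed, $\mu_\varepsilon$ is supported in $[\tau_d-\varepsilon,\tau_d+\varepsilon]\subset[\tau_d-\varepsilon_0,\tau_d+\varepsilon_0]\subset(0,\tau)$, so $\mu_\varepsilon,\nu\in\M_+^0([0,\tau])$. Setting $\delta_0:=\tau_d-\varepsilon_0>0$, we get
\begin{align*}
  \kappa_{\mu_\varepsilon}(\beta)\le m e^{-\beta\delta_0}.
\end{align*}
We therefore fix $\beta$ so large that $\Lambda_1me^{-\beta\delta_0}\le\alpha_0/2$, which gives $d_{\mu_\varepsilon,\beta}\ge\alpha_0/2$ uniformly in $\varepsilon\le\varepsilon_0$. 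This uniform margin is the one step where the hypothesis $\varepsilon_0<\tau_d$ is genuinely used. It also explains why the constant depends on $\varepsilon_0$: through $\beta$, and hence through the factor $e^{\beta\Tend}$ in \eqref{eq:transport-modulus-full}.

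Next we take the coupling. Since $\nu$ is a single atom, the only plan is $\pi_\varepsilon=\mu_\varepsilon\otimes\delta_{\tau_d}$, which lies in $\Pi(\mu_\varepsilon,\nu)$ and is supported in $\{|s-r|\le\varepsilon\}$. Applying \eqref{eq:transport-modulus-epsilon}, together with the unweighted version from \eqref{eq:transport-modulus-full}, to the zero extension $G_0\in L^2(\R;V)$ of $\tilde u_\nu$ yields \eqref{eq:delay-modulus-rate} with
\begin{align*}
  C_{\varepsilon_0}=2e^{\beta\Tend}\Lambda_1 m(1+\sqrt{\alpha_0}+\alpha_0+\Lambda_0)/\alpha_0.
\end{align*}
The qualitative convergence \eqref{eq:qualitative-delay-convergence} then follows from \eqref{eq:translation-modulus-zero}.

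Narrow convergence is checked directly: for $\varphi\in C([0,\tau])$, the change of variables $s=\tau_d+\varepsilon z$ gives
\begin{align*}
  \int\varphi\,\dd\mu_\varepsilon=m\int_{-1}^1\varphi(\tau_d+\varepsilon z)\eta(z)\,\dd z\to m\varphi(\tau_d)
\end{align*}
by uniform continuity of $\varphi$. Alternatively, it follows from $W_1(\mu_\varepsilon,\nu)\le m\varepsilon\to0$.

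For the rate, the same plan gives
\begin{align*}
  W_1(\mu_\varepsilon,\nu)\le\int|s-\tau_d|\,\dd\mu_\varepsilon\le m\varepsilon.
\end{align*}
Then \eqref{eq:wasserstein-rate-full} with the uniform margin $d\ge\alpha_0/2$ gives the bound $C m^{1-\theta}(m\varepsilon)^\theta\norm{G_\theta}_{H^\theta(\R;V)}$, which is \eqref{eq:delay-rate}. Finally, under the hypotheses of \Cref{prop:data-H1-criterion} applied with the kernel $\nu$ (which lies in $\M^0$), the joined trajectory $\tilde u_\nu$ has an $H^1(\R;V)$ extension, so $\theta=1$ is admissible. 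Everything is a direct substitution once the uniform weight is in place, so no step should be a real obstacle.
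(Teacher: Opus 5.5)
Your proposal is correct and follows the paper's proof essentially step by step. It uses the same uniform weight from $\kappa_{\mu_\varepsilon}(\beta)\le me^{-\beta(\tau_d-\varepsilon_0)}$, the same coupling $\mu_\varepsilon\otimes\delta_{\tau_d}$, and the same constant $C_{\varepsilon_0}$. The only cosmetic difference concerns \eqref{eq:delay-rate}: you invoke \eqref{eq:wasserstein-rate-full} with $W_1(\mu_\varepsilon,\nu)\le m\varepsilon$, whereas the paper bounds $\omega_{G_\theta}(\varepsilon)\le C_\theta\varepsilon^\theta\norm{G_\theta}_{H^\theta(\R;V)}$ via \Cref{lem:translation} and reruns the modulus estimate with $G_\theta$; both give the same $m\varepsilon^\theta$ bound.
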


\begin{proof}
For $0<\varepsilon<\min\{\tau_d,\tau-\tau_d\}$, the measures $\mu_\varepsilon$ and $\nu$ belong to $\M_+^0([0,\tau])$ and have mass $m$. The change of variables $s=\tau_d+\varepsilon z$ and dominated convergence give $\mu_\varepsilon\rightharpoonup\nu$ narrowly.  The plan $\pi_\varepsilon:=\mu_\varepsilon\otimes\delta_{\tau_d}$ belongs to $\Pi(\mu_\varepsilon,\nu)$ and is supported in $\{(s,r):\abs{s-r}\le\varepsilon\}$.  For $0<\varepsilon\le\varepsilon_0$ and $\beta\ge0$,
\begin{align*}
  \kappa_{\mu_\varepsilon}(\beta)
  \le m e^{-\beta(\tau_d-\varepsilon_0)}.
\end{align*}
Choosing one $\beta\ge0$ with $\Lambda_1me^{-\beta(\tau_d-\varepsilon_0)}<\alpha_0/2$ therefore gives $d_{\mu_\varepsilon,\beta}\ge\alpha_0/2$ for all such $\varepsilon$.  Then, \eqref{eq:transport-modulus-full} with the extension $G_0$ and the coupling $\pi_\varepsilon$ yields \eqref{eq:delay-modulus-rate} with
\begin{align*}
  C_{\varepsilon_0}
  =2e^{\beta\Tend}\Lambda_1m(1+\sqrt{\alpha_0}+\alpha_0+\Lambda_0)/\alpha_0,
\end{align*}
and \eqref{eq:qualitative-delay-convergence} follows from \eqref{eq:translation-modulus-zero}.  If $G_\theta\in H^\theta(\R;V)$ extends $\tilde u_\nu$, \Cref{lem:translation} gives $\omega_{G_\theta}(\varepsilon)\le C_\theta\varepsilon^\theta\norm{G_\theta}_{H^\theta(\R;V)}$, and the same argument with $G_\theta$ proves \eqref{eq:delay-rate}.  Under the hypotheses of \Cref{prop:data-H1-criterion}, that proposition provides such an extension with $\theta=1$.
\end{proof}

\begin{remark}[Endpoint delays]
The restriction $\tau_d\in(0,\tau)$ avoids truncating the mollifier at an endpoint.  The case $\tau_d=\tau$ can be treated using a one-sided approximate identity.  Concentration at $\tau_d=0$ is an instantaneous limit, for which the limiting atom must be split off and absorbed into the instantaneous form as described in \Cref{rem:atom-zero}.  This limit is non-uniform in general.  Indeed, \Cref{thm:small-delay-instability} constructs fixed-mass discrete delays approaching zero, together with a sequence of uniformly bounded data, such that the corresponding solutions lose uniform control at the critical threshold and grow exponentially in the supercritical regime.
\end{remark}


\begin{corollary}[Moving multiple delays]\label[corollary]{cor:moving-delays}
Let
\begin{align*}
  \mu_n=\sum_{j=1}^Jm_j\delta_{\tau_{j,n}},
  \quad
  \mu=\sum_{j=1}^Jm_j\delta_{\tau_j},
\end{align*}
where $m_j>0$,
$\tau_{j,n},\tau_j\in(0,\tau]$, and
$\tau_{j,n}\to\tau_j$.  Let the corresponding solutions have common data.  Then,
\begin{align*}
  u_{\mu_n}\longrightarrow u_\mu
  \quad\text{in }\W(0,\Tend)\cap \mathcal{C}([0,\Tend];H).
\end{align*}
If the joined limit trajectory admits an extension in
$H^\theta(\R;V)$ for some $0<\theta\le1$, then, for all sufficiently large $n$,
\begin{align}\label{eq:moving-delay-rate}
  \norm{u_{\mu_n}-u_\mu}_{\W(0,\Tend)}
  +\norm{u_{\mu_n}-u_\mu}_{\mathcal{C}([0,\Tend];H)}
  \le C
  \left(\sum_{j=1}^Jm_j\abs{\tau_{j,n}-\tau_j}\right)^\theta,
\end{align}
where $C$ is independent of $n$.
\end{corollary}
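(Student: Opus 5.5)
The plan is to follow the proof of \Cref{cor:delay-concentration}, with the explicit atom-to-atom coupling
\begin{align*}
  \pi_n:=\sum_{j=1}^Jm_j\,\delta_{(\tau_{j,n},\tau_j)}\in\Pi(\mu_n,\mu).
\end{align*}
The first step is to check the hypotheses of \Cref{thm:transport-modulus} and \Cref{thm:wasserstein-rate}. Both $\mu_n$ and $\mu$ have the common mass $M:=\sum_jm_j>0$, and they are non-negative. Since every location lies in $(0,\tau]$, neither measure charges the origin, so $\mu_n,\mu\in\M_+^0([0,\tau])$. The coupling gives the transport cost bound
\begin{align*}
  W_1(\mu_n,\mu)\le\sum_jm_j\abs{\tau_{j,n}-\tau_j}.
\end{align*}

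The second step is a common stability margin, which is where positivity of the limit lags is used. Set $\tau_*:=\min_j\tau_j>0$. For $n\ge N$ we have $\tau_{j,n}\ge\tau_*/2$ for every $j$, and hence
\begin{align*}
  \kappa_{\mu_n}(\beta)\le Me^{-\beta\tau_*/2}.
\end{align*}
Fix one $\beta\ge0$ with $\Lambda_1Me^{-\beta\tau_*/2}\le\alpha_0/2$. Then $d_{\mu_n,\beta}\ge\alpha_0/2$ for all $n\ge N$. Alternatively, since $\mu_n\rightharpoonup\mu$ narrowly and $\mu(\{0\})=0$, \Cref{lem:common-weight} gives the same conclusion directly.

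The third step applies the estimates with $\mu_n$ in the role of $\mu$ (which supplies the margin) and $\mu$ in the role of $\nu$ (the reference). For qualitative convergence, take $G_0$ to be the zero extension of $\tilde u_\mu$, which lies in $L^2(\R;V)$. The coupling $\pi_n$ is supported in $\{\abs{s-r}\le\varepsilon_n\}$ with $\varepsilon_n:=\max_j\abs{\tau_{j,n}-\tau_j}\to0$. By \eqref{eq:transport-modulus-full}, for $n\ge N$ the error is bounded by
\begin{align*}
  2e^{\beta\Tend}\Lambda_1M(1+\sqrt{\alpha_0}+\alpha_0+\Lambda_0)\alpha_0^{-1}\,\omega_{G_0}(\varepsilon_n),
\end{align*}
which tends to zero by \eqref{eq:translation-modulus-zero}. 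This route avoids any regularity assumption. One could also simply invoke \Cref{thm:narrow-stability}.

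For the rate, use the assumed extension $G\in H^\theta(\R;V)$ of $\tilde u_\mu$ in \eqref{eq:wasserstein-rate-full}. Combined with the bound on $W_1$ from the first step, this gives \eqref{eq:moving-delay-rate} with
\begin{align*}
  C=2e^{\beta\Tend}C_\theta\Lambda_1(1+\sqrt{\alpha_0}+\alpha_0+\Lambda_0)M^{1-\theta}\alpha_0^{-1}\norm{G}_{H^\theta(\R;V)},
\end{align*}
which is independent of $n$.

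The only delicate point is the second step. The margin is determined by the moving kernel $\mu_n$, not by the limit, so the weight has to be chosen uniformly in $n$. This is precisely why the rate is asserted only for sufficiently large $n$, namely once all lags stay bounded away from zero. No other obstacle is expected.
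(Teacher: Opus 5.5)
Your proposal is correct and follows the paper's proof essentially step for step. You use the same atom-to-atom coupling $\pi_n$ and the same common margin $d_{\mu_n,\beta}\ge\alpha_0/2$, obtained once every lag exceeds half the smallest limit lag. You then apply \eqref{eq:transport-modulus-full} with the zero extension for convergence and \eqref{eq:wasserstein-rate-full} for the rate. The only cosmetic difference is that you bound the transport-modulus cost by $M\,\omega_{G_0}(\max_j\abs{\tau_{j,n}-\tau_j})$ via the supported-coupling clause, whereas the paper uses $\sum_jm_j\,\omega_{G_0}(\abs{\tau_{j,n}-\tau_j})$.
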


\begin{proof}
We set $M:=\sum_{j=1}^Jm_j$ and $\delta:=\frac12\min_{1\le j\le J}\tau_j>0$. There is $n_0$ such that $\tau_{j,n}\ge\delta$ for all $j$ and $n\ge n_0$, so that $\kappa_{\mu_n}(\beta)\le Me^{-\beta\delta}$.  Choosing $\beta\ge0$ with $\Lambda_1Me^{-\beta\delta}<\alpha_0/2$ gives
\begin{align}\label{eq:moving-delay-common-margin}
  d_{\mu_n,\beta}\ge\frac{\alpha_0}{2},
  \quad n\ge n_0.
\end{align}
The plan $\pi_n:=\sum_{j=1}^Jm_j\delta_{(\tau_{j,n},\tau_j)}$ belongs to $\Pi(\mu_n,\mu)$.  For the zero extension $G_0$ of $\tilde u_\mu$,
\begin{align*}
  \mathfrak C_{G_0}(\mu_n,\mu)
  \le\sum_{j=1}^Jm_j\,\omega_{G_0}(\abs{\tau_{j,n}-\tau_j})
  \longrightarrow0,
\end{align*}
and \eqref{eq:transport-modulus-full} with \eqref{eq:moving-delay-common-margin} gives the convergence.  If $\tilde u_\mu$ admits an extension $G_\theta\in H^\theta(\R;V)$, the same plan gives $W_1(\mu_n,\mu)\le\sum_{j=1}^Jm_j\abs{\tau_{j,n}-\tau_j}$, and \eqref{eq:wasserstein-rate-full} with \eqref{eq:moving-delay-common-margin} proves \eqref{eq:moving-delay-rate} for $n\ge n_0$.
\end{proof}


\begin{corollary}[Finite-delay quadrature approximation of a distributed kernel]\label[corollary]{cor:atomic-approximation}
Let $\mu\in\M_+^0([0,\tau])$ have mass $M>0$, and let $\mu_N\in\M_+^0([0,\tau])$ be finite atomic measures with the same mass $M$. Assume $W_1(\mu_N,\mu)\to0$, and let all solutions have common data. Then,
\begin{align*}
  u_{\mu_N}\longrightarrow u_\mu
  \quad\text{in }\W(0,\Tend)\cap \mathcal{C}([0,\Tend];H).
\end{align*}
If the joined limit trajectory admits an extension in $H^\theta(\R;V)$ for some $0<\theta\le1$, then
\begin{align*}
  \norm{u_{\mu_N}-u_\mu}_{\W(0,\Tend)}
  +\norm{u_{\mu_N}-u_\mu}_{\mathcal{C}([0,\Tend];H)}
  =O\bigl(W_1(\mu_N,\mu)^\theta\bigr).
\end{align*}
Finite atomic approximations with atoms in $(0,\tau]$ and the stated $W_1$ convergence can always be constructed.
\end{corollary}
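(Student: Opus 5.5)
The proof will follow the same pattern as \Cref{cor:delay-concentration,cor:moving-delays}: find one common exponential weight, apply the transport theorems, and add a separate construction for the existence claim.

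\emph{Step 1: a uniform stability margin.} Since $\mu\in\M_+^0([0,\tau])$, we have $\mu(\{0\})=0$. On the compact interval $[0,\tau]$, equal-mass $W_1$ convergence is equivalent to narrow convergence (as noted in the proof of \Cref{prop:transport-W1-bridge}), so $\mu_N\rightharpoonup\mu$ narrowly. \Cref{lem:common-weight} with $d_*=\alpha_0/2$ then gives $\beta\ge0$ and $N_0$ such that $d_{\mu_N,\beta}\ge\alpha_0/2$ for all $N\ge N_0$. The roles must be assigned so that the stability margin belongs to the measure whose solution is perturbed, while the reference trajectory is the fixed limit. Concretely, we apply \Cref{thm:transport-modulus} with $(\mu,\nu)$ replaced by $(\mu_N,\mu)$. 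The margin is then $d_{\mu_N,\beta}$, which is uniformly bounded below, and the reference trajectory is $\tilde u_\mu$, which does not depend on $N$.

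\emph{Step 2: convergence and rate.} Take $G_0$ to be the zero extension of $\tilde u_\mu$. By \Cref{prop:transport-W1-bridge}, $\mathfrak C_{G_0}(\mu_N,\mu)\to0$. Inserting this into \eqref{eq:transport-modulus-full} with the uniform margin gives convergence in $\W(0,\Tend)\cap\mathcal C([0,\Tend];H)$. If $\tilde u_\mu$ has an extension $G_\theta\in H^\theta(\R;V)$, then \eqref{eq:wasserstein-rate-full} applied to the pair $(\mu_N,\mu)$ gives the bound
\begin{align*}
2e^{\beta\Tend}C_\theta\Lambda_1(1+\sqrt{\alpha_0}+\alpha_0+\Lambda_0)M^{1-\theta}\alpha_0^{-1}\norm{G_\theta}_{H^\theta(\R;V)}\,W_1(\mu_N,\mu)^\theta
\end{align*}
for all $N\ge N_0$. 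The finitely many indices $N<N_0$ can be absorbed into the $O$-constant, because every $u_{\mu_N}$ exists by \Cref{thm:wellposedness}.

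\emph{Step 3: construction of the quadrature (the main point needing care).} The atoms must lie in $(0,\tau]$ so that each $\mu_N$ belongs to $\M_+^0$. Partition $(0,\tau]$ into the intervals $I_k=((k-1)\tau/N,k\tau/N]$, $k=1,\dots,N$. Set $\mu_N:=\sum_k\mu(I_k)\delta_{k\tau/N}$, placing each atom at the right endpoint of its interval. Because $\mu(\{0\})=0$, the total mass is preserved, and every atom lies in $(0,\tau]$, so $\mu_N\in\M_+^0([0,\tau])$. Now define the plan
\begin{align*}
\pi_N:=\sum_k(\mu|_{I_k})\otimes\delta_{k\tau/N}.
\end{align*}
Its first marginal is $\mu$ and its second marginal is $\mu_N$. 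Moreover $\pi_N$ is supported in $\{|s-r|\le\tau/N\}$. Since $W_1$ is symmetric under transposing plans, this gives $W_1(\mu_N,\mu)\le M\tau/N\to0$. This step is elementary. The only real subtlety in the whole proof is the one handled in Step 1: keeping the margin on the side of $\mu_N$ while keeping the fixed limit as the reference trajectory, so that all constants are independent of $N$.
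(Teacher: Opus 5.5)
Your proposal is correct and follows the paper's proof: the common margin $d_{\mu_N,\beta}\ge d_*$ from \Cref{lem:common-weight}, the rate from \eqref{eq:wasserstein-rate-full} applied to the pair $(\mu_N,\mu)$ with the fixed reference $\tilde u_\mu$, and the right-endpoint quadrature with $W_1(\mu_N,\mu)\le M\tau/N$. The paper closes the first cell at $0$ where you use $\mu(\{0\})=0$, which is equivalent; for the qualitative convergence the paper simply cites \Cref{thm:narrow-stability}, whereas you rederive it from \Cref{thm:transport-modulus} with the zero extension and \Cref{prop:transport-W1-bridge}, which is equally valid and mirrors the proof of \Cref{cor:moving-delays}.
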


\begin{proof}
For equal masses on the compact interval $[0,\tau]$, convergence in $W_1$ implies narrow convergence, so \Cref{thm:narrow-stability} gives the convergence.  Fix $d_*\in(0,\alpha_0)$.  Because $\mu(\{0\})=0$, \Cref{lem:common-weight} provides $\beta\ge0$ and $N_0$ with $d_{\mu_N,\beta}\ge d_*$ for $N\ge N_0$.  Then, \eqref{eq:wasserstein-rate-full} with this common weight gives the rate, with constant $e^{\beta\Tend}C_\theta\Lambda_1(1+\sqrt{\alpha_0}+\alpha_0+\Lambda_0)M^{1-\theta}\norm{G_\theta}_{H^\theta(\R;V)}/d_*$.

For the construction, let $h_N:=\tau/N$, $I_{N,1}:=[0,h_N]$, $I_{N,k}:=((k-1)h_N,kh_N]$ for $2\le k\le N$, and
\begin{align*}
  \mu_N:=\sum_{k=1}^N\mu(I_{N,k})\,\delta_{kh_N}.
\end{align*}
Then, $\mu_N\in\M_+^0([0,\tau])$ has mass $M$.  The plan
\begin{align}\label{eq:atomic-approximation-plan}
  \pi_N(E):=\sum_{k=1}^N\int_{I_{N,k}}\boldsymbol{1}_E(kh_N,r)\,\dd\mu(r),
\end{align}
which moves $\mu|_{I_{N,k}}$ to the atom at $kh_N$, has marginals $\mu_N$ and $\mu$, and $\abs{s-r}\le h_N$ on its support. Therefore, $W_1(\mu_N,\mu)\le Mh_N\to0$.
\end{proof}

\begin{remark}[Kernel quadrature as a numerical error component]
\label[remark]{rem:numerical-splitting}
Let $\mu_N$ be a finite atomic approximation of $\mu$ as in \Cref{cor:atomic-approximation}, and let $U_{h,k,N}$ be a spatially and temporally discrete approximation of $u_{\mu_N}$.  Suppose that the joined limit trajectory $\tilde u_\mu$ admits an extension $G_\theta\in H^\theta(\R;V)$ and that there are a common $\beta\ge0$, a constant $d_*>0$, and an index $N_0$ such that
\begin{align*}
  d_{\mu_N,\beta}\ge d_*,
  \quad N\ge N_0.
\end{align*}
Whenever $U_{h,k,N}\in L^2_\beta(0,\Tend;V)$, the triangle inequality and \eqref{eq:wasserstein-rate} give
\begin{align}\label{eq:numerical-error-splitting}
  \norm{u_\mu-U_{h,k,N}}_{L^2_\beta(0,\Tend;V)}
  &\le
  \norm{u_\mu-u_{\mu_N}}_{L^2_\beta(0,\Tend;V)}
  +\norm{u_{\mu_N}-U_{h,k,N}}_{L^2_\beta(0,\Tend;V)}
  \notag\\
  &\le
  C W_1(\mu_N,\mu)^\theta
  +\norm{u_{\mu_N}-U_{h,k,N}}_{L^2_\beta(0,\Tend;V)},
\end{align}
where $C$ is independent of $h$, $k$, and $N$. If $U_{h,k,N}\in\W(0,\Tend)$, then the triangle inequality and \eqref{eq:wasserstein-rate-full} similarly give
\begin{align*}
  &\norm{u_\mu-U_{h,k,N}}_{\W(0,\Tend)}
  +\norm{u_\mu-U_{h,k,N}}_{\mathcal C([0,\Tend];H)}
  \\
  &\quad\le
  C W_1(\mu_N,\mu)^\theta
  +\norm{u_{\mu_N}-U_{h,k,N}}_{\W(0,\Tend)}
  +\norm{u_{\mu_N}-U_{h,k,N}}_{\mathcal C([0,\Tend];H)}.
\end{align*}
Thus, the first term in \eqref{eq:numerical-error-splitting} is a kernel-quadrature or model-reduction error, whereas the second is the usual finite-element and time-stepping error for the finite-delay problem. The estimate is independent of the particular state discretisation and can therefore be combined with standard parabolic and delay-equation schemes~\cite{BellenZennaro2003,Pao1999,Thomee2006}. A mesh-uniform semidiscrete application requires additional temporal regularity estimates, as explained in \Cref{sec:spatial-transfer}, and a complete fully discrete analysis also requires spatial-consistency and time-stepping bounds that remain uniform over the chosen family of atomic kernels.  For delays acting on the state through a bounded operator, matching lower bounds for midpoint quadrature of the delay measure are proved in~\cite[Theorem~8.5]{Shikhman2026b}.  For the principal-part delays considered here, \Cref{cor:atomic-approximation} provides only the upper rate $W_1(\mu_N,\mu)^\theta$; no matching lower bound for quadrature is claimed.
\end{remark}

\begin{remark}[Delay atoms and Prony modes are different objects]
\label[remark]{rem:not-prony}
If
\begin{align*}
  \mu=\sum_{j=1}^Jm_j\delta_{\tau_j},
\end{align*}
then
\begin{align*}
  (\K_\mu\tilde u)(t)
  =
  \sum_{j=1}^Jm_jA_1\tilde u(t-\tau_j).
\end{align*}
Thus, an atomic measure in the delay variable produces a finite sum of shifted states.  It should not be called a Prony kernel.  By contrast, a finite Prony-series memory kernel has a lag-time density of the form
\begin{align*}
  k(t)=\sum_{\ell=1}^Lc_\ell e^{-\gamma_\ell t},
  \quad t>0,
\end{align*}
and, in the positive-type setting, corresponds to the atomic measure
\begin{align*}
  \rho=\sum_{\ell=1}^Lc_\ell\delta_{\gamma_\ell}
\end{align*}
in the separate relaxation-rate variable. Keeping the delay measure $\mu$ and the relaxation-rate measure $\rho$ distinct is essential when this paper is read together with the companion work on completely monotone memory~\cite{IshizakaCoercivity2026}.
\end{remark}

\begin{remark}[Vanishing memory]
If $\mu_n\in\M_+^0([0,\tau])$ and $\mu_n([0,\tau])\to0$, the solutions converge to the memory-free solution with the same data in $\W(0,\Tend)\cap\mathcal C([0,\Tend];H)$, at a rate proportional to $\mu_n([0,\tau])$.  This follows from the local total-variation Lipschitz estimate of~\cite[Corollary~5.3]{IshizakaMixed2026} at the zero kernel and requires no transport argument.
\end{remark}

\subsection{Numerical interpretation}\label{sec:spatial-transfer}
The preceding estimates provide a separate bound for the error caused by replacing one delay law by another.  For any fixed spatial Galerkin space on which the abstract hypotheses hold, the same residual argument applies to the associated finite-dimensional evolution problem.  This elementary observation does not by itself yield a \emph{mesh-uniform} transport estimate.  Uniformity additionally requires common constants in the discrete form bounds, a common admissible exponential weight, and a bound of the form
\begin{align}\label{eq:uniform-galerkin-conditions}
  \sup_h \norm{G_h}_{H^\theta(\R;V_h)}<\infty
\end{align}
for the joined reference trajectories.  The last condition is a substantive regularity requirement and is not established here from general Galerkin data.

\begin{remark}[Scope of a numerical sequel]\label[remark]{rem:finite-element-bridge}
Conforming or nonconforming finite-element spaces may provide uniform discrete Poincar\'e inequalities, including the semiregular Crouzeix--Raviart setting of \cite{IshizakaDiscreteSobolev2026}.  This controls the spatial embedding constant but does not establish \eqref{eq:uniform-galerkin-conditions}. Verifying temporal regularity uniformly for suitable projected forcing and history data, together with spatial consistency and time stepping, is reserved for a numerical sequel.
\end{remark}

\section{The singular small-delay boundary}\label{sec:small-delay}
The concentration results above concern a fixed positive limiting lag.  At the origin, an atom acts without delay and changes the instantaneous form itself.  The weighted estimate already detects this distinction.  For $\mu=M\delta_{\tau_d}$,
\begin{align}\label{eq:atomic-weighted-mass}
  \kappa_\mu(\beta)=M e^{-\beta\tau_d},
  \quad
  q_{\mu,\beta}
  =\frac{\Lambda_1M}{\alpha_0}e^{-\beta\tau_d}.
\end{align}
Fix $q_0\in(0,1)$.  If $\Lambda_1M>\alpha_0q_0$, then the requirement $q_{\mu,\beta}\le q_0$ is equivalent to
\begin{align}\label{eq:small-delay-required-weight}
  \beta\ge
  \frac{1}{\tau_d}
  \log\left(\frac{\Lambda_1M}{\alpha_0q_0}\right).
\end{align}
Thus, once the unweighted contraction threshold is exceeded, a contraction margin uniform as $\tau_d\downarrow0$ requires a weight of order $\tau_d^{-1}$.

Principal-part delays in parabolic equations have been studied in \cite{YongPan1993,DiBlasioKunischSinestrari1984,KhusainovPokojovyRacke2015}, while delayed heat models can exhibit instability and ill-posedness \cite{JordanDaiMickens2008,DreherQuintanillaRacke2009}. For nonlinear measure-valued delayed diffusion, concentration towards the origin is treated in \cite{Tsukamoto2026} under pathwise coercivity. Small-delay robustness is also classical in infinite-dimensional feedback systems \cite{Datko1988,LogemannRebarber1996,LogemannRebarberWeiss1996,NicaisePignotti2006}.

Here, the instantaneous principal part remains coercive. We identify the sharp class-level threshold for finite-time energy estimates depending only on the structural constants and delayed mass, uniformly as the lag tends to zero. In the diffusion model below, every fixed scalar mode is spectrally stable at the critical ratio $\Lambda_1M/\alpha_0=1$, yet uniform $L^2(0,\Tend;V)$ control fails in a joint small-delay, high-frequency limit. Above this threshold, we obtain the exact asymptotic logarithmic growth rate for an explicit solution family. The characteristic equation is classical \cite{Hayes1950}; the required root expansion follows directly from the complex implicit-function theorem.

\begin{proposition}[Uniform unweighted bounds below the mass threshold]
\label[proposition]{prop:subcritical-mass-bound}
Let $\mu\in\M_+^0([0,\tau])$ have mass $M>0$, and assume
\begin{align}\label{eq:strict-subcritical-threshold}
  \Lambda_1M<\alpha_0.
\end{align}
Then, the solution of \eqref{eq:problem} satisfies
\begin{align}\label{eq:subcritical-mass-bound}
  &\norm{u_\mu}_{\mathcal C([0,\Tend];H)}^2
  +\norm{u_\mu}_{L^2(0,\Tend;V)}^2 \notag\\
  &\quad\le C\left(
    \norm{u_0}_H^2
    +\norm{f}_{L^2(0,\Tend;V^*)}^2
    +\norm{\psi}_{L^2(-\tau,0;V)}^2
  \right),
\end{align}
where $C$ depends only on $\Tend,\alpha_0,\Lambda_0,\Lambda_1$, and $M$, and is independent of the location and distribution of the delay mass.
\end{proposition}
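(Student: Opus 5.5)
Under \eqref{eq:strict-subcritical-threshold} we can take $\beta=0$ in \Cref{thm:wellposedness}: since $\kappa_\mu(0)=\mu([0,\tau])=M$ for a non-negative kernel, we get $q_{\mu,0}=\Lambda_1M/\alpha_0<1$. Estimate \eqref{eq:weighted-u-bound} with $\beta=0$ then gives
\begin{align*}
  \norm{u_\mu}_{L^2(0,\Tend;V)}
  \le\frac{1}{1-\Lambda_1M/\alpha_0}\Bigl[\alpha_0^{-1/2}\norm{u_0}_H
  +\alpha_0^{-1}\norm{f}_{L^2(0,\Tend;V^*)}
  +\alpha_0^{-1}\Lambda_1M\norm{\psi}_{L^2(-\tau,0;V)}\Bigr],
\end{align*}
using $\norm{\mu}_{\M([0,\tau])}=M$. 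The constant involves only $\alpha_0$, $\Lambda_1$ and $M$, and not the location or shape of $\mu$. This is the key point. The bad weight \eqref{eq:small-delay-required-weight} arises only when $\Lambda_1M\ge\alpha_0$, so no $\tau_d^{-1}$-dependent weight is ever needed here.

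Next we obtain the $\mathcal C([0,\Tend];H)$ part from the equation. We view $u_\mu$ as the solution of the instantaneous problem $\partial_tu+A_0u=f-\K_\mu\tilde u_\mu$. By \Cref{lem:measure-convolution} the right-hand side is bounded in $L^2(0,\Tend;V^*)$ by $\norm{f}+\Lambda_1M(\norm{\psi}_{L^2(-\tau,0;V)}+\norm{u_\mu}_{L^2(0,\Tend;V)})$. The standard energy identity, valid since $u_\mu\in\W(0,\Tend)$, gives
\begin{align*}
  \tfrac12\norm{u(t)}_H^2+\alpha_0\int_0^t\norm{u}_V^2
  \le\tfrac12\norm{u_0}_H^2+\int_0^t\norm{g}_{V^*}\norm{u}_V .
\end{align*}
Here $g:=f-\K_\mu\tilde u_\mu$ denotes the modified forcing. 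Applying Young's inequality, we get $\sup_t\norm{u(t)}_H^2\le\norm{u_0}_H^2+\alpha_0^{-1}\norm{g}^2_{L^2(0,\Tend;V^*)}$. Substituting the previous $L^2(0,\Tend;V)$ bound and squaring then yields \eqref{eq:subcritical-mass-bound}. The constant depends only on $\alpha_0,\Lambda_1,M$; the stated dependence on $\Tend$ and $\Lambda_0$ is harmless and can be kept for uniformity with \eqref{eq:residual-full-unweighted}.

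The only point needing care, and the step I would check most, is that the unweighted case $\beta=0$ is genuinely admissible in \Cref{thm:wellposedness}, i.e.\ that the contraction argument there does not secretly require $\beta>0$. If it did, I would redo the a priori estimate directly. Testing with $u$ and bounding $\int_0^\Tend\pair{\K_\mu\tilde u}{u}$ by $\Lambda_1\int\norm{\tilde u(t-s)}_V\norm{u(t)}_V\,\dd t\,\dd\mu(s)\le\Lambda_1M\norm{\tilde u}_{L^2(-\tau,\Tend;V)}\norm{u}_{L^2(0,\Tend;V)}$ gives the same conclusion, because shifts are isometric on $L^2$. The $\psi$ part is then absorbed by Young's inequality and the $u$ part by the margin $\alpha_0-\Lambda_1M>0$.
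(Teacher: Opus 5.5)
Your proposal is correct and follows the paper's proof. You take $\beta=0$ in \Cref{thm:wellposedness}, which the theorem explicitly permits, so $q_{\mu,0}=\Lambda_1M/\alpha_0<1$ gives the mass-only $L^2(0,\Tend;V)$ bound. You then bound $\K_\mu\tilde u_\mu$ via \Cref{lem:measure-convolution} and use the energy identity for the $\mathcal C([0,\Tend];H)$ part.

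The only difference is minor. The paper first bounds $\partial_tu_\mu$ in $L^2(0,\Tend;V^*)$ from the equation, which brings in $\Lambda_0$, and then uses $\sup_t\norm{u_\mu(t)}_H^2\le\norm{u_0}_H^2+2\norm{\partial_tu_\mu}_{L^2(0,\Tend;V^*)}\norm{u_\mu}_{L^2(0,\Tend;V)}$. You instead keep the coercive term $a_0(u,u)$ in the energy identity and treat the delay as forcing, which gives a constant free of $\Lambda_0$ and $\Tend$.
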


\begin{proof}
We first obtain the $L^2(0,\Tend;V)$ bound.  For $\beta=0$,
\begin{align*}
  q_{\mu,0}=\frac{\Lambda_1M}{\alpha_0}<1.
\end{align*}
Therefore, \eqref{eq:weighted-u-bound} gives a bound for $\norm{u_\mu}_{L^2(0,\Tend;V)}$ with a constant independent of $\mu$. By \eqref{eq:measure-convolution-bound},
\begin{align*}
  \norm{\K_\mu\tilde u_\mu}_{L^2(0,\Tend;V^*)}
  \le
  \Lambda_1M\left(
    \norm{\psi}_{L^2(-\tau,0;V)}
    +\norm{u_\mu}_{L^2(0,\Tend;V)}
  \right).
\end{align*}
The equation then bounds $\norm{\partial_tu_\mu}_{L^2(0,\Tend;V^*)}$ by the same data. Finally, the standard energy identity gives
\begin{align*}
  \sup_{0\le t\le\Tend}\norm{u_\mu(t)}_H^2
  \le
  \norm{u_0}_H^2
  +2\norm{\partial_tu_\mu}_{L^2(0,\Tend;V^*)}
   \norm{u_\mu}_{L^2(0,\Tend;V)}.
\end{align*}
Combining the three estimates proves \eqref{eq:subcritical-mass-bound}.
\end{proof}

\begin{lemma}[Characteristic roots at the mass threshold]
\label[lemma]{lem:threshold-characteristic-roots}
For $a>0$ and $\mathfrak m>0$, we set
\begin{align*}
  \Phi_{a,\mathfrak m}(z)
  :=z+a+a\mathfrak m e^{-z},
  \quad z\in\mathbb C.
\end{align*}
If $0<\mathfrak m\le1$, then $\Phi_{a,\mathfrak m}$ has no zero in $\{z\in\mathbb C:\operatorname{Re}z\ge0\}$. Furthermore, for each $\mathfrak m\ge1$ and all sufficiently large $a$, $\Phi_{a,\mathfrak m}$ has a zero $z_{\mathfrak m}(a)$ satisfying
\begin{align}\label{eq:threshold-root-expansion}
  z_{\mathfrak m}(a)
  &=
  z_{\mathfrak m}^0
  +\frac{z_{\mathfrak m}^1}{a}
  +\frac{z_{\mathfrak m}^2}{a^2}
  +O(a^{-3}),\\
  z_{\mathfrak m}^0
  &:=
  \log\mathfrak m+\mathrm{i}\pi,\quad
  z_{\mathfrak m}^1:=-z_{\mathfrak m}^0,\quad
  z_{\mathfrak m}^2:=
  z_{\mathfrak m}^0+\frac{(z_{\mathfrak m}^0)^2}{2}.
  \notag
\end{align}
In particular,
\begin{align}\label{eq:threshold-root-real-parts}
  \operatorname{Re}z_{\mathfrak m}(a)
  &\longrightarrow\log\mathfrak m
  &&(\mathfrak m>1),\\
  \operatorname{Re}z_1(a)
  &=-\frac{\pi^2}{2a^2}+O(a^{-3}).
  \notag
\end{align}
\end{lemma}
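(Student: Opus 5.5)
For the first assertion I would argue directly by modulus comparison. Suppose $\Phi_{a,\mathfrak m}(z)=0$ with $\operatorname{Re}z\ge0$. Then $z+a=-a\mathfrak m e^{-z}$, and since $\abs{e^{-z}}=e^{-\operatorname{Re}z}\le1$ we get $\abs{z+a}\le a\mathfrak m\le a$. On the other hand, $\abs{z+a}\ge\operatorname{Re}z+a\ge a$. So all inequalities are equalities: $\operatorname{Re}z=0$, $\mathfrak m=1$, and $\abs{z+a}=a$. Writing $z=\mathrm i y$ gives $a^2+y^2=a^2$, so $y=0$ and $z=0$. But $\Phi_{a,1}(0)=2a\neq0$, a contradiction. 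Hence there is no zero in the closed right half-plane.

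For the expansion, I would rescale so that the problem becomes a regular perturbation at $\varepsilon:=1/a\to0$. Dividing by $a$ gives
\begin{align*}
  \Psi(z,\varepsilon):=\varepsilon z+1+\mathfrak m e^{-z}=0.
\end{align*}
At $\varepsilon=0$ the equation is $e^{-z}=-1/\mathfrak m$, i.e.\ $e^{z}=-\mathfrak m$, which has the root $z^0_{\mathfrak m}=\log\mathfrak m+\mathrm i\pi$. Moreover $\partial_z\Psi(z^0,0)=-\mathfrak m e^{-z^0}=1\neq0$. Since $\Psi$ is holomorphic in $(z,\varepsilon)$, the complex implicit-function theorem gives a holomorphic branch $z(\varepsilon)$ near $\varepsilon=0$ with $z(0)=z^0$. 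Setting $z_{\mathfrak m}(a):=z(1/a)$ for large $a$, this holomorphy also justifies the $O(a^{-3})$ remainder, since the Taylor series converges.

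Next I would compute the coefficients by inserting $z=z^0+z^1\varepsilon+z^2\varepsilon^2+O(\varepsilon^3)$ and using $\mathfrak m e^{-z^0}=-1$, so that $\mathfrak m e^{-z}=-e^{-(z-z^0)}$. Expanding $-e^{-w}=-1+w-w^2/2+O(w^3)$ with $w=z^1\varepsilon+z^2\varepsilon^2$:

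\begin{itemize}
\item the order $\varepsilon$ terms give $z^0+z^1=0$, so $z^1=-z^0$;
\item the order $\varepsilon^2$ terms give $z^1+z^2-(z^1)^2/2=0$, so $z^2=z^0+(z^0)^2/2$.
\end{itemize}

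This matches \eqref{eq:threshold-root-expansion}.

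For the real parts, \eqref{eq:threshold-root-real-parts} with $\mathfrak m>1$ follows immediately from $z(\varepsilon)\to z^0$. For $\mathfrak m=1$ we have $z^0=\mathrm i\pi$, so $z^1=-\mathrm i\pi$ is purely imaginary, and $z^2=\mathrm i\pi-\pi^2/2$ has real part $-\pi^2/2$. This gives $\operatorname{Re}z_1(a)=-\pi^2/(2a^2)+O(a^{-3})$.

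The only mildly delicate step is the uniformity of the remainder. I would handle it by invoking holomorphy of the implicit branch on a fixed disc in $\varepsilon$, with $\mathfrak m$ fixed, rather than by bounding the Taylor remainder by hand.
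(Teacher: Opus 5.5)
Your proposal is correct and follows essentially the same route as the paper: the modulus comparison $a\le\abs{z+a}\le a\mathfrak m$ for the half-plane claim, and the complex implicit-function theorem for $1+\mathfrak m e^{-z}+\varepsilon z$ at $(\log\mathfrak m+\mathrm{i}\pi,0)$, where $\partial_z=1$, followed by matching coefficients. Your explicit treatment of the equality case ($z=\mathrm{i}y$ forces $y=0$) and your order-by-order computation via $\mathfrak m e^{-z}=-e^{-(z-z^0_{\mathfrak m})}$ just spell out details that the paper leaves implicit.
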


\begin{proof}
We first exclude roots in the closed right half-plane.  Suppose that
$\Phi_{a,\mathfrak m}(z)=0$ and
$\operatorname{Re}z\ge0$.  Then,
\begin{align*}
  \abs{z+a}
  =a\mathfrak m e^{-\operatorname{Re}z}
  \le a\mathfrak m,
\end{align*}
whereas
\begin{align*}
  \abs{z+a}\ge\operatorname{Re}(z+a)\ge a.
\end{align*}
This is impossible if $\mathfrak m<1$.  If $\mathfrak m=1$, equality throughout would force $z=0$, but $\Phi_{a,1}(0)=2a\neq0$.

We next derive the expansion. We set $\varepsilon:=a^{-1}$ and define
\begin{align*}
  F_{\mathfrak m}(z,\varepsilon)
  :=1+\mathfrak m e^{-z}+\varepsilon z.
\end{align*}
At $z=z_{\mathfrak m}^0=\log\mathfrak m+\mathrm{i}\pi$ and $\varepsilon=0$, one has
\begin{align*}
  F_{\mathfrak m}(z_{\mathfrak m}^0,0)=0,
  \quad
  \partial_zF_{\mathfrak m}(z_{\mathfrak m}^0,0)
  =-\mathfrak m e^{-z_{\mathfrak m}^0}=1.
\end{align*}
The complex implicit-function theorem gives an analytic branch of zeros $z_{\mathfrak m}(\varepsilon)$ near $\varepsilon=0$.  Substitution of
\begin{align*}
  z_{\mathfrak m}(\varepsilon)
  =
  z_{\mathfrak m}^0
  +z_{\mathfrak m}^1\varepsilon
  +z_{\mathfrak m}^2\varepsilon^2
  +O(\varepsilon^3)
\end{align*}
into $F_{\mathfrak m}(z_{\mathfrak m}(\varepsilon),\varepsilon)=0$ and comparison of equal powers of $\varepsilon$ give the coefficients in \eqref{eq:threshold-root-expansion}.  Taking real parts proves \eqref{eq:threshold-root-real-parts}.
\end{proof}

\begin{theorem}[Critical degeneration and supercritical instability]
\label[theorem]{thm:small-delay-instability}
Let $\Tend>0$, $\tau>0$, and $\alpha_0,\Lambda_1,M>0$.  In the delayed-diffusion realisation of \Cref{sec:pde-realisation}, we set
\begin{align*}
  \Omega=(0,\pi),\quad
  H=L^2(0,\pi),\quad
  V=H_0^1(0,\pi),\\
  B_0=\alpha_0,\quad
  B_1=\Lambda_1,
\end{align*}
use $\norm{v}_V=\norm{v'}_{L^2(0,\pi)}$, and set $f=0$. We define
\begin{align*}
  \mathfrak m:=\frac{\Lambda_1M}{\alpha_0}.
\end{align*}
If $\mathfrak m\ge1$, then there are $j_0\in\mathbb N$ and discrete retarded kernels
\begin{align}\label{eq:small-delay-kernels}
  \mu_j:=M\delta_{\tau_j},
  \quad
  \tau_j:=\frac{1}{\alpha_0j}\downarrow0,
  \quad j\ge j_0,
\end{align}
and data $u_{0,j}\in H$ and $\psi_j\in L^2(-\tau,0;V)$ such that
\begin{align}\label{eq:small-delay-data-bound}
  \sup_{j\ge j_0}\left(
    \norm{u_{0,j}}_H^2
    +\norm{\psi_j}_{L^2(-\tau,0;V)}^2
  \right)<\infty.
\end{align}
The corresponding weak solutions have the following properties.
\begin{enumerate}
\item If $\mathfrak m=1$, then, for all sufficiently large $j$,
\begin{align}\label{eq:critical-small-delay-growth}
  \norm{u_j}_{L^2(0,\Tend;V)}^2
  \ge\frac{\Tend}{8}j.
\end{align}
\item If $\mathfrak m>1$, then
\begin{align}\label{eq:small-delay-growth}
  \lim_{j\to\infty}
  \frac{\tau_j}{\Tend}
  \log\norm{u_j}_{\mathcal C([0,\Tend];H)}
  =\log\mathfrak m.
\end{align}
In particular, $\norm{u_j}_{\mathcal C([0,\Tend];H)}\to\infty$.
\end{enumerate}
Consequently, when $\Lambda_1M\ge\alpha_0$, there is no constant depending only on $\Tend,\tau,\alpha_0,\Lambda_0,\Lambda_1$, and $M$ for which
\begin{align}\label{eq:impossible-mass-only-bound}
  &\norm{u}_{\mathcal C([0,\Tend];H)}^2
   +\norm{u}_{L^2(0,\Tend;V)}^2 \notag\\
  &\quad\le C\left(
      \norm{u_0}_H^2
      +\norm{f}_{L^2(0,\Tend;V^*)}^2
      +\norm{\psi}_{L^2(-\tau,0;V)}^2
    \right)
\end{align}
holds for all admissible data and any non-negative retarded measure $\mu\in\M_+^0([0,\tau])$ of mass $\mu([0,\tau])=M$.
\end{theorem}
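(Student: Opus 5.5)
The plan is to build explicit separated-variable solutions, one Fourier mode at a time, and to let \Cref{lem:threshold-characteristic-roots} supply the temporal exponent. On $\Omega=(0,\pi)$ the operators act on $e_k(x)=\sin(kx)$ by $A_0e_k=\alpha_0k^2e_k$ and $A_1e_k=\Lambda_1k^2e_k$. We also have $\norm{e_k}_H^2=\pi/2$ and $\norm{e_k}_V^2=k^2\pi/2$.

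\emph{Step 1: the complex solution.} For $\mu_j=M\delta_{\tau_j}$ the complex ansatz $e^{\lambda t}e_k$ solves the homogeneous delay equation exactly when
\begin{align*}
  \lambda+\alpha_0k^2+\Lambda_1Mk^2e^{-\lambda\tau_j}=0.
\end{align*}
Put $z=\lambda\tau_j$ and $a=\alpha_0k^2\tau_j$. Then the condition becomes $\Phi_{a,\mathfrak m}(z)=0$. With $\tau_j=1/(\alpha_0j)$ I choose $k=k_j:=j$, so that $a=j\to\infty$. For $j\ge j_0$, \Cref{lem:threshold-characteristic-roots} gives a root $z_{\mathfrak m}(j)$, and I set $\lambda_j:=\alpha_0j\,z_{\mathfrak m}(j)$. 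By \eqref{eq:threshold-root-expansion},
\begin{align*}
  \operatorname{Im}\lambda_j\sim\alpha_0\pi j\to\infty,
  \quad
  \operatorname{Re}\lambda_j\,\tau_j\to\log\mathfrak m.
\end{align*}
At $\mathfrak m=1$ the expansion gives $\operatorname{Re}\lambda_j=-\alpha_0\pi^2/(2j)+O(j^{-2})$. Because all coefficients are real, the real part
\begin{align*}
  U_j(t):=c_j\,e^{\operatorname{Re}\lambda_jt}\cos(\operatorname{Im}\lambda_j\,t)\,e_{k_j}
\end{align*}
is again a solution on all of $\R$.

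\emph{Step 2: data and normalisation.} I take $u_{0,j}:=U_j(0)$ and let $\psi_j:=U_j$ on $(-\tau_j,0)$, extended by zero on $(-\tau,-\tau_j)$. The equation only samples the history on $[-\tau_j,0]$, so the restriction of $U_j$ to $(0,\Tend)$ is the weak solution with these data. It lies in $\W(0,\Tend)$, and uniqueness from \Cref{thm:wellposedness} identifies it as $u_j$. The key point is that cutting off the history gives the bound
\begin{align*}
  \norm{\psi_j}_{L^2(-\tau,0;V)}^2\le c_j^2\,\tau_j\,k_j^2\,\tfrac{\pi}{2}\,\sup_{-\tau_j\le t\le0}e^{2\operatorname{Re}\lambda_jt}.
\end{align*}
The supremum stays bounded: in the critical case $\abs{\operatorname{Re}\lambda_j}\tau_j=O(j^{-2})$, and in the supercritical case it is at most $1$. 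With $\tau_jk_j^2=j/\alpha_0$, the choice $c_j:=j^{-1/2}$ makes the history norm bounded and gives $\norm{u_{0,j}}_H^2=O(j^{-1})$, which proves \eqref{eq:small-delay-data-bound}.

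\emph{Step 3: growth.}

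In the critical case, $e^{2\operatorname{Re}\lambda_jt}\ge1-o(1)$ uniformly on $[0,\Tend]$. The oscillation $\cos^2(\operatorname{Im}\lambda_jt)$ has frequency of order $j$, so its average over $[0,\Tend]$ tends to $1/2$. This gives
\begin{align*}
  \norm{u_j}_{L^2(0,\Tend;V)}^2=c_j^2k_j^2\tfrac{\pi}{2}\int_0^\Tend e^{2\operatorname{Re}\lambda_jt}\cos^2(\operatorname{Im}\lambda_jt)\,\dd t\ge(1-o(1))\tfrac{\pi}{4}\Tend\,j,
\end{align*}
which exceeds $\Tend j/8$ for large $j$.

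In the supercritical case I need matching upper and lower bounds on the sup norm.
\begin{itemize}
\item Upper bound: $\norm{u_j}_{\mathcal C([0,\Tend];H)}\le c_j\sqrt{\pi/2}\,e^{\operatorname{Re}\lambda_j\Tend}$.
\item Lower bound: the cosine has period $O(1/j)$, so there is a time $t\in[\Tend-O(1/j),\Tend]$ where $\abs{\cos}=1$. This gives a lower bound $c_j\sqrt{\pi/2}\,e^{\operatorname{Re}\lambda_j(\Tend-O(1/j))}$.
\end{itemize}
Multiplying the logarithms by $\tau_j/\Tend$, the factor $\log c_j$ and the correction $\operatorname{Re}\lambda_j\,O(1/j)=O(1)$ both become $O(\tau_j)=o(1)$. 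Since $\operatorname{Re}\lambda_j\tau_j\to\log\mathfrak m$, this proves \eqref{eq:small-delay-growth}.

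\emph{Step 4: the final assertion.} The kernels $\mu_j$ all belong to $\M_+^0([0,\tau])$ with mass $M$, the forcing is $f=0$, and the data are bounded by \eqref{eq:small-delay-data-bound}. The left side of \eqref{eq:impossible-mass-only-bound} is unbounded along the sequence in both cases, so no constant of the stated form exists.

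The main obstacle is the normalisation in Step 2. With the naive history on the whole interval $(-\tau,0)$, the data bound and the growth bound scale identically, and at criticality the argument yields nothing. Truncating the history to $(-\tau_j,0)$ is exactly what gains the factor $j$. The secondary care points are uniformity in the oscillation averages and the exact identification of the truncated-history solution with $U_j$.
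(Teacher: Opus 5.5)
Your proposal is correct and follows the paper's proof essentially step by step. It uses the same mode $k=j$ (so $a=j$ in \Cref{lem:threshold-characteristic-roots}), the same amplitude $c_j=j^{-1/2}$, the history truncated to $(-\tau_j,0)$, and the same peak-near-$\Tend$ argument for the supercritical lower bound; your unnormalised $\sin(jx)$ only adds harmless factors $\pi/2$. One small slip: $\tau_j\log c_j=-\tfrac12\tau_j\log j$ is $O(\tau_j\log(1/\tau_j))$ rather than $O(\tau_j)$, but it still tends to zero, so the conclusion is unaffected.
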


\begin{proof}
\emph{Step 1: scalar reduction and construction.}
Let
\begin{align*}
  \rho_j:=j^2,\quad
  \varphi_j(x):=\sqrt{\frac{2}{\pi}}\sin(jx),\quad
  c_j:=j^{-1/2}.
\end{align*}
Then, $\norm{\varphi_j}_H=1$, $\norm{\varphi_j}_V^2=\rho_j$, and
\begin{align}\label{eq:small-delay-eigenmode}
  a_0(\varphi_j,v)
  &=\alpha_0\rho_j(\varphi_j,v)_H,\\
  a_1(\varphi_j,v)
  &=\Lambda_1\rho_j(\varphi_j,v)_H
  \quad(v\in V).
  \notag
\end{align}
A trajectory $u(t)=y(t)\varphi_j$ therefore solves the equation with kernel $\mu_j$ precisely when
\begin{align}\label{eq:small-delay-scalar}
  \dot y(t)
  +\alpha_0\rho_jy(t)
  +\Lambda_1M\rho_jy(t-\tau_j)=0.
\end{align}

Because
\begin{align*}
  \alpha_0\rho_j\tau_j=j,
\end{align*}
\Cref{lem:threshold-characteristic-roots} gives, for all sufficiently large $j$, a zero
\begin{align*}
  z_j:=z_{\mathfrak m}(j)
\end{align*}
of
\begin{align}\label{eq:small-delay-characteristic}
  z+j+j\mathfrak m e^{-z}=0.
\end{align}
We write
\begin{align*}
  x_j:=\operatorname{Re}z_j,\quad
  b_j:=\operatorname{Im}z_j.
\end{align*}
The real-valued function
\begin{align}\label{eq:small-delay-yj}
  y_j(t)
  &:=
  c_j\operatorname{Re}
  \exp\left(z_j\frac{t}{\tau_j}\right)\\
  &=
  c_je^{x_jt/\tau_j}
  \cos\left(b_j\frac{t}{\tau_j}\right)
  \notag
\end{align}
solves \eqref{eq:small-delay-scalar} on $\mathbb R$.

We set
\begin{align*}
  u_j(t):=y_j(t)\varphi_j,\quad
  u_{0,j}:=c_j\varphi_j,
  \quad t\in[0,\Tend],
\end{align*}
and, on the common past interval $(-\tau,0)$, define
\begin{align}\label{eq:small-delay-history}
  \psi_j(t):=
  \begin{cases}
    0,&-\tau<t\le-\tau_j,\\
    y_j(t)\varphi_j,&-\tau_j<t<0.
  \end{cases}
\end{align}
For all sufficiently large $j$, $\tau_j<\tau$.  If $0<t<\tau_j$, then $t-\tau_j\in(-\tau_j,0)$ and the delayed value is given by \eqref{eq:small-delay-history}; if $\tau_j\le t<\Tend$, then $t-\tau_j\in[0,\Tend)$ and the definition of $u_j$ gives the same expression. Therefore, in either case,
\begin{align*}
  \tilde u_j(t-\tau_j)
  =y_j(t-\tau_j)\varphi_j
  \quad\text{for a.e. }t\in(0,\Tend).
\end{align*}
Together with \eqref{eq:small-delay-eigenmode}--\eqref{eq:small-delay-history}, this shows that $u_j$ is the weak solution associated with $(u_{0,j},0,\psi_j,\mu_j)$.

\emph{Step 2: uniform bound for the data.}
The sequence $(x_j)$ is bounded.  Therefore,
\begin{align*}
  \norm{\psi_j}_{L^2(-\tau,0;V)}^2
  &=
  \rho_jc_j^2
  \int_{-\tau_j}^{0}
  e^{2x_jt/\tau_j}
  \cos^2\left(b_j\frac{t}{\tau_j}\right)\,\dd t\\
  &\le
  \rho_jc_j^2\tau_j
  \max\{1,e^{-2x_j}\}\\
  &=
  \frac{1}{\alpha_0}\max\{1,e^{-2x_j}\}.
\end{align*}
Together with $\norm{u_{0,j}}_H^2=c_j^2=j^{-1}$, this proves \eqref{eq:small-delay-data-bound}.

\emph{Step 3: the critical case.}
Suppose that $\mathfrak m=1$.  By \eqref{eq:threshold-root-real-parts},
\begin{align*}
  \frac{x_j}{\tau_j}\longrightarrow0,
  \quad
  \frac{b_j}{\tau_j}\longrightarrow\infty.
\end{align*}
Therefore, for all sufficiently large $j$,
\begin{align*}
  e^{2x_jt/\tau_j}\ge\frac12
  \quad(0\le t\le\Tend),
\end{align*}
and
\begin{align*}
  \int_0^\Tend
  \cos^2\left(b_j\frac{t}{\tau_j}\right)\,\dd t
  \ge\frac{\Tend}{4}.
\end{align*}
Consequently,
\begin{align*}
  \norm{u_j}_{L^2(0,\Tend;V)}^2
  &=
  \rho_jc_j^2
  \int_0^\Tend
  e^{2x_jt/\tau_j}
  \cos^2\left(b_j\frac{t}{\tau_j}\right)\,\dd t\\
  &\ge\frac{\Tend}{8}j,
\end{align*}
which proves \eqref{eq:critical-small-delay-growth}.

\emph{Step 4: the supercritical case.}
Suppose that $\mathfrak m>1$.  Then, $x_j\to\log\mathfrak m>0$ and $b_j\to\pi$.  The upper bound
\begin{align*}
  \norm{u_j}_{\mathcal C([0,\Tend];H)}
  \le c_j\exp\left(x_j\frac{\Tend}{\tau_j}\right)
\end{align*}
is immediate.  Let
\begin{align*}
  k_j
  :=
  \left\lfloor\frac{b_j\Tend}{\pi\tau_j}\right\rfloor,
  \quad
  t_j:=\frac{k_j\pi\tau_j}{b_j}.
\end{align*}
For all sufficiently large $j$,
\begin{align*}
  \Tend-\frac{\pi\tau_j}{b_j}<t_j\le\Tend
\end{align*}
and the cosine in \eqref{eq:small-delay-yj} has absolute value one at
$t_j$.  Thus,
\begin{align*}
  c_j\exp\left(
    x_j\frac{\Tend}{\tau_j}-\frac{\pi x_j}{b_j}
  \right)
  \le
  \norm{u_j}_{\mathcal C([0,\Tend];H)}
  \le
  c_j\exp\left(x_j\frac{\Tend}{\tau_j}\right).
\end{align*}
Taking logarithms, multiplying by $\tau_j/\Tend$, and using
\begin{align*}
  \tau_j\abs{\log c_j}\longrightarrow0
\end{align*}
proves \eqref{eq:small-delay-growth}.  The failure of \eqref{eq:impossible-mass-only-bound} follows in the critical case from \eqref{eq:critical-small-delay-growth} and in the supercritical case from \eqref{eq:small-delay-growth}.
\end{proof}

\begin{remark}[The critical value is spectrally stable but not uniform]
\label[remark]{rem:critical-spectral-stability}
For $\mathfrak m=1$ and any fixed $a>0$, \Cref{lem:threshold-characteristic-roots} places all characteristic roots strictly in the open left half-plane.  The critical failure in \Cref{thm:small-delay-instability} is therefore not caused by an unstable root at any fixed mode.  Instead, a pair of roots approaches $\{\pm\mathrm{i}\pi\}$ in the dimensionless variable while the spatial frequency diverges.  This destroys uniform control of the $L^2(0,\Tend;V)$ norm even though each fixed scalar delay equation is spectrally stable.
\end{remark}

\begin{remark}[Why the weighted estimate has the correct supercritical scale]
\label[remark]{rem:weighted-small-delay-scale}
For the family \eqref{eq:small-delay-kernels},
\begin{align*}
  q_{\mu_j,\beta}
  =
  \mathfrak m e^{-\beta\tau_j}.
\end{align*}
Fixing $q_0\in(0,1)$ and requiring
$q_{\mu_j,\beta}\le q_0$ forces
\begin{align*}
  \beta\ge
  \frac{1}{\tau_j}
  \log\left(\frac{\mathfrak m}{q_0}\right).
\end{align*}
When $\mathfrak m>1$, the logarithmic growth coefficient in \eqref{eq:small-delay-growth} is $\log\mathfrak m$, which is the limit of the weighted coefficient as $q_0\uparrow1$.  At the critical value, the loss occurs instead in the coercive space--time norm as described in \Cref{rem:critical-spectral-stability}.
\end{remark}

\begin{remark}[Why positive-lag stability is not contradicted]
\label[remark]{rem:small-delay-no-contradiction}
The measures in \eqref{eq:small-delay-kernels} converge narrowly to $M\delta_0$.  This limit does not belong to $\M_+^0([0,\tau])$: by \Cref{rem:atom-zero}, it changes the instantaneous form from $a_0$ to $a_0+Ma_1$.  Furthermore, the construction follows spatial frequencies $\rho_j\to\infty$ and uses $j$-dependent data.  It therefore does not contradict \Cref{thm:narrow-stability} and \Cref{cor:delay-concentration}: the former is a fixed-data result for a retarded limiting kernel and uses a common admissible weighted margin, while the latter estimates translations of a fixed reference trajectory. In the present construction, the limiting measure is not retarded, the data vary with $j$, and no common admissible weighted margin exists.
\end{remark}

\section{Solution-level sharpness of the fractional exponent}
\label{sec:sharpness}
We show that the exponent in \Cref{thm:wasserstein-rate} is optimal not only for the translation estimate of \Cref{lem:translation}, as the family constructed below also shows, but for the \emph{solution difference} itself, on the class of admissible reference trajectories with uniformly bounded $H^\theta$ norm.  Using a spatial frequency larger than the temporal frequency removes the putative gain from parabolic smoothing. \Cref{cor:two-sided-modulus} combines the resulting lower bound with \Cref{thm:wasserstein-rate} into a two-sided estimate.  For $\theta=0$, we use the conventions $H^0(\R;V)=L^2(\R;V)$ and $W_1(\mu,\nu)^0:=1$.

\begin{theorem}[Solution-level optimality for moving delay atoms]
\label[theorem]{thm:solution-sharpness}
Fix $\Tend>0$, $0<\tau_d<\min\{\Tend,\tau\}$ and $\alpha_0,\Lambda_1>0$.  Choose a constant $0<m_0<\alpha_0/\Lambda_1$ and let $0\le\theta\le1$. In the delayed-diffusion realisation of \Cref{sec:pde-realisation}, we set
\begin{align*}
  \Omega=(0,\pi),\quad
  V=H_0^1(0,\pi),\quad H=L^2(0,\pi),\quad
  B_0=\alpha_0,\quad B_1=\Lambda_1.
\end{align*}
There are positive constants $c$, $C$, and $h_0$ with the following property.  For any $m\in(0,m_0]$ and any $h\in(0,h_0]$, the retarded kernels
\begin{align*}
  \nu=m\delta_{\tau_d},\quad
  \mu_h=m\delta_{\tau_d+h},
  \quad W_1(\mu_h,\nu)=mh,
\end{align*}
belong to $\M_+^0([0,\tau])$, and there are common data $u_0=0$, $\psi=0$, and $f_h\in L^2(0,\Tend;V^*)$ such that
\begin{align}\label{eq:solution-sharpness-uniform}
  \norm{f_h}_{L^2(0,\Tend;V^*)}\le C,
  \quad
  \norm{G_{\nu,h}}_{H^\theta(\R;V)}\le C,
\end{align}
where $G_{\nu,h}$ is the zero extension of the reference solution $u_{\nu,h}$, and
\begin{align}\label{eq:solution-level-lower}
  \norm{u_{\mu_h,h}-u_{\nu,h}}_{L^2(0,\Tend;V)}
  \ge c\,m^{1-\theta}W_1(\mu_h,\nu)^\theta.
\end{align}
For $\theta=0$, the right-hand side is $cm$. Consequently, even when the reference trajectories and forcings obey \eqref{eq:solution-sharpness-uniform}, the exponent $\theta$ in the $W_1^\theta$ solution estimate cannot be replaced uniformly by any larger exponent.  The construction uses spatial modes and data that depend on $h$; no obstruction to a better rate for a fixed smoother solution is asserted.
\end{theorem}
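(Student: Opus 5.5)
We build an explicit single-mode family on $\Omega=(0,\pi)$ in which the reference solution is prescribed and the forcing is read off from the equation. The spatial frequency is chosen so large that parabolic smoothing gives no gain. Set $\omega:=\pi/h$ and $\varphi_k(x)=\sqrt{2/\pi}\sin(kx)$. Then $\norm{\varphi_k}_V=k$, $\norm{\varphi_k}_{V^*}=k^{-1}$, and $a_i(\varphi_k,\cdot)$ acts as multiplication by $\alpha_0k^2$ or $\Lambda_1k^2$, as in \eqref{eq:small-delay-eigenmode}. Fix a cutoff $\chi\in C_c^\infty(\R)$ with support in a fixed interval $[t_1,t_2]$, where $0<t_1<t_2<\Tend-\tau_d-h_0$; this is possible after shrinking $h_0$, since $\tau_d<\Tend$. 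Also take $h_0<\tau-\tau_d$, so that $\mu_h\in\M_+^0([0,\tau])$. Define
\begin{align*}
  y_h(t):=a\,\chi(t)\sin(\omega t),\qquad u_{\nu,h}:=y_h\varphi_k,\qquad a:=k^{-1}\omega^{-\theta},
\end{align*}
and define $f_h$ as the left-hand side of the $\nu$-equation applied to $u_{\nu,h}$:
\begin{align*}
  f_h:=\bigl(\dot y_h+\alpha_0k^2y_h+\Lambda_1mk^2y_h(\cdot-\tau_d)\bigr)\varphi_k .
\end{align*}
Since $y_h$ vanishes near $t\le0$, the data $u_0=0$, $\psi=0$ are consistent, and uniqueness in \Cref{thm:wellposedness} identifies $u_{\nu,h}$ as the reference solution. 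The integer $k$ is chosen with $k^2\ge K\omega$ for a large constant $K$ fixed later; for example $k=\lceil (K\pi/h)^{1/2}\rceil+\lceil\pi/h\rceil$.

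\emph{Uniform bounds.} The zero extension $G_{\nu,h}$ is smooth, and $\norm{G_{\nu,h}}_{H^\theta(\R;V)}\lesssim a k\,\omega^\theta=O(1)$ by Plancherel, because the spectrum of $\chi\sin(\omega\cdot)$ is concentrated near $\pm\omega$. For the forcing,
\begin{align*}
  \norm{f_h}_{L^2(0,\Tend;V^*)}\lesssim k^{-1}\bigl(a\omega+(\alpha_0+\Lambda_1m_0)ak^2\bigr)
  = \omega^{1-\theta}k^{-2}+O(\omega^{-\theta}),
\end{align*}
and both terms are bounded because $k^2\ge\omega$. This gives \eqref{eq:solution-sharpness-uniform}.

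\emph{The residual.} Let $e:=u_{\mu_h,h}-u_{\nu,h}=z\varphi_k$. It has zero initial data and zero history, and satisfies
\begin{align*}
  \dot z+\alpha_0k^2z+\Lambda_1mk^2z(\cdot-\tau_d-h)=r,\qquad
  r:=-\Lambda_1mk^2\bigl[y_h(\cdot-\tau_d-h)-y_h(\cdot-\tau_d)\bigr].
\end{align*}
Since $\omega h=\pi$, we have $\sin(\omega(t-h))=-\sin(\omega t)$. Hence the bracket equals $-a\bigl(\chi(\cdot-\tau_d-h)+\chi(\cdot-\tau_d)\bigr)\sin\bigl(\omega(\cdot-\tau_d)\bigr)$, which lies in $(0,\Tend)$ and has $L^2$ norm at least $c_\chi a$. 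Therefore the residual $R=r\varphi_k$ satisfies
\begin{align*}
  \norm{R}_{L^2(0,\Tend;V^*)}\ge c\,mk\,a=c\,m\,\omega^{-\theta}=c'\,m\,h^\theta=c'\,m^{1-\theta}(mh)^\theta .
\end{align*}
It remains to show $\norm{e}_{L^2(0,\Tend;V)}\ge c\norm{R}_{L^2(0,\Tend;V^*)}$, i.e.\ to reverse the residual estimate.

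\emph{Reverse residual estimate (main obstacle).} From the equation in $V^*$,
\begin{align*}
  \norm{R}_{L^2V^*}\le\norm{\dot e}_{L^2V^*}+(\alpha_0+\Lambda_1m_0)\norm{e}_{L^2V}.
\end{align*}
The delayed term is controlled because the history is zero and $\mu_h$ has mass $m\le m_0$. The difficulty is to show that $\norm{\dot e}_{L^2V^*}$ is a small multiple of $\norm{R}_{L^2V^*}$. The function $r$ is smooth and vanishes near $0$, so I would differentiate the scalar equation. The derivative $\dot z$ solves the same delay equation with right-hand side $\dot r$ and zero data. We may take $\beta=0$ uniformly, because $q=\Lambda_1m/\alpha_0\le\Lambda_1m_0/\alpha_0<1$. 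Applying \eqref{eq:weighted-u-bound} to this differentiated problem gives $k\norm{\dot z}_{L^2}\le C\,k^{-1}\norm{\dot r}_{L^2}$. Since $\norm{\dot r}_{L^2}\lesssim\omega\norm{r}_{L^2}$ (the $\chi'$ contribution is lower order), it follows that
\begin{align*}
  \norm{\dot e}_{L^2V^*}=k^{-1}\norm{\dot z}_{L^2}\le C\,\frac{\omega}{k^2}\norm{R}_{L^2V^*}\le\frac{C}{K}\norm{R}_{L^2V^*}.
\end{align*}
Choosing $K\ge 2C$ lets us absorb this term, giving $\norm{e}_{L^2V}\ge(2(\alpha_0+\Lambda_1m_0))^{-1}\norm{R}_{L^2V^*}$, which proves \eqref{eq:solution-level-lower}. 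The constants depend only on $\chi,\tau_d,\Tend,\alpha_0,\Lambda_1,m_0,\theta$. The case $\theta=0$ is covered by the same computation with $a=k^{-1}$.
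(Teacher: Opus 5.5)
Your proposal is correct, but its key step differs from the paper's. The paper never controls the error on all of $(0,\Tend)$. It places the cutoff in $(0,\min\{\tau_d,\Tend-\tau_d\})$ and works on the first-delay window $(0,\tau_d+b)$. There the delayed error is still zero, so the error solves the pure ODE $y'+\alpha_0\rho y=-\Lambda_1m\rho D$. It compares $y$ with the quasi-static profile $-\frac{\Lambda_1m}{\alpha_0}D$, with remainder $O(mN^{1-\theta}/\rho)$ from Young's inequality with the kernel $e^{-\alpha_0\rho t}$. The lower bound then comes from an explicit $\sin^2$ integral on $\tau_d+J$, where $\chi\equiv1$.

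You instead prove a global reverse residual inequality, $\norm{R}_{L^2(0,\Tend;V^*)}\le\norm{\dot e}_{L^2(0,\Tend;V^*)}+(\alpha_0+\Lambda_1m_0)\norm{e}_{L^2(0,\Tend;V)}$. You then show that the first term is at most $C(\omega/k^2)\norm{R}_{L^2(0,\Tend;V^*)}$ via the differentiated problem. That differentiated bound is exactly \eqref{eq:time-derivative-bound} of \Cref{prop:data-H1-criterion} applied to the error equation with $u_0=0$, $\psi=0$, $f=R$, for which $w_0=R(0)\in H$. Citing it removes any need to discuss smoothness of $z$ or its vanishing near $t=0$. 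The constant there depends only on $\chi$, $\alpha_0$ and $\Lambda_1m_0$, so fixing $K$ before $k$ is legitimate.

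What your route buys is a structural statement. Once $k^2\gg\omega$, the solution map $R\mapsto e$ is bounded below from $L^2(0,\Tend;V^*)$ to $L^2(0,\Tend;V)$, with the delay feedback included. This involves no window geometry: the cutoff need only be supported in $(0,\Tend-\tau_d-h_0)$, and the argument transfers to other kernel pairs with high-frequency residuals.

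The price is that you genuinely use $\Lambda_1m/\alpha_0\le\Lambda_1m_0/\alpha_0<1$ to run the unweighted estimate for the error's own feedback. The paper's window argument does not need $m_0<\alpha_0/\Lambda_1$ at all, as it notes in the proof of \Cref{cor:two-sided-modulus}. If you wanted to drop that hypothesis, a fixed weight with $\Lambda_1m_0e^{-\beta\tau_d}<\alpha_0/2$ would work, with $e^{\beta\Tend}$ absorbed into $K$. The paper's approach also yields the pointwise description $e\approx-\frac{\Lambda_1m}{\alpha_0}D\phi$ on the window.

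One small tidy-up: take $\chi\ge0$ (or $\chi\equiv1$ on a subinterval). Then the lower bound $c_\chi a$ for your bracket is immediate and uniform in $h\in(0,h_0]$ once $\omega$ is large.
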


\begin{proof}
We choose a closed interval $J=[a,b]$ with
\begin{align*}
  0<a<b<\min\{\tau_d,\Tend-\tau_d\},
\end{align*}
a number $\delta\in(0,a)$, and a smooth cutoff $\chi\in C_c^\infty((0,\min\{\tau_d,\Tend-\tau_d\}))$ that equals one on $[a-\delta,b]$.  For $h>0$, put $N:=\pi/h$, choose an integer $j\ge N$, and set $\rho:=j^2$ and
\begin{align}\label{eq:sharp-reference-data}
  \phi(x)&:=\frac1{j}\sqrt{\frac2\pi}\sin(jx),\\
  g(t)&:=N^{-\theta}\chi(t)\sin(Nt),\quad
  G(t,x):=g(t)\phi(x).\notag
\end{align}
These objects depend on $h$; we suppress this dependence in the notation. The Dirichlet eigenmode $\phi$ satisfies
\begin{align}\label{eq:sharp-eigen-scales}
  \norm{\phi}_V=1,\quad
  \norm{\phi}_{V^*}=\rho^{-1},\quad
  A\phi=\rho\phi\quad\text{in }V^*,
\end{align}
where $A=-\partial_{xx}$ is the Dirichlet Laplacian.  For $N\ge1$, the usual Fourier-shift estimate for the smooth compactly supported function $\chi$ and a direct computation give, with a constant $C_\chi$ depending only on $\chi$ and $\theta$,
\begin{align}\label{eq:sharp-Htheta}
  \norm{G}_{H^\theta(\R;V)}
  =\norm{g}_{H^\theta(\R)}\le C_\chi,
  \quad
  \norm{g'}_{L^2(\R)}\le C_\chi N^{1-\theta}.
\end{align}
We set
\begin{align*}
  c_0:=\frac{\Lambda_1}{\alpha_0}(b-a)^{1/2},
  \quad
  C_1:=\frac{2\Lambda_1C_\chi}{\alpha_0^2},
\end{align*}
and define
\begin{align*}
  N_0:=\max\Bigl\{1,\frac{2}{b-a},\frac{2C_1}{c_0}\Bigr\},
  \quad
  h_0:=\min\Bigl\{\delta,\tau-\tau_d,\frac{\pi}{N_0}\Bigr\}.
\end{align*}
These numbers depend neither on $m$ nor on $h$.  Let $m\in(0,m_0]$ and $h\in(0,h_0]$.  Then, $N\ge N_0$ and $\tau_d+h\le\tau$, and hence $\mu_h\in\M_+^0([0,\tau])$.

We define the forcing on $(0,\Tend)$ by
\begin{align}\label{eq:sharp-forcing}
  f_h(t):=\partial_tG(t)+\alpha_0AG(t)
  +\Lambda_1mA G(t-\tau_d).
\end{align}
Take the initial condition and past history to be zero.  Then, the reference solution is exactly $u_{\nu,h}=G|_{(0,\Tend)}$, and $G_{\nu,h}=G$.  By \eqref{eq:sharp-eigen-scales} and \eqref{eq:sharp-Htheta},
\begin{align*}
 \norm{f_h}_{L^2(0,\Tend;V^*)}
 &\le \rho^{-1}\norm{g'}_{L^2(\R)}
   +(\alpha_0+\Lambda_1m_0)\norm{g}_{L^2(\R)}\\
 &\le C_\chi N^{1-\theta}\rho^{-1}
   +(\alpha_0+\Lambda_1m_0)\norm{\chi}_{L^2(\R)}N^{-\theta}\\
 &\le C_\chi+(\alpha_0+\Lambda_1m_0)\norm{\chi}_{L^2(\R)},
\end{align*}
because $\rho\ge N^2$ and $N\ge1$.  Thus, both bounds in \eqref{eq:solution-sharpness-uniform} hold with a constant $C$ independent of $m\in(0,m_0]$ and $h\in(0,h_0]$.

Let $e=u_{\mu_h,h}-u_{\nu,h}$.  Uniqueness and the common eigenmode show that $e(t)=y(t)\phi$ for some scalar $y$ satisfying
\begin{align}\label{eq:sharp-error-scalar}
  y'(t)+\alpha_0\rho y(t)
  +\Lambda_1m\rho\,\widetilde y(t-\tau_d-h)
  =-\Lambda_1m\rho D(t),
\end{align}
where
\begin{align*}
 D(t):=g(t-\tau_d-h)-g(t-\tau_d),
 \quad y(0)=0,
\end{align*}
and the past of $y$ vanishes.  The support of $g$ begins strictly after zero, so $D$ and hence $e$ vanish on $(0,\tau_d]$.  On the interval $0<t<\tau_d+b$, the delayed error in \eqref{eq:sharp-error-scalar} is therefore zero: indeed, $t-\tau_d-h<b<\tau_d$.  Consequently, on this interval $y$ obeys the ordinary scalar parabolic equation
\begin{align}\label{eq:sharp-first-window-ode}
  y'+\alpha_0\rho y
  =-\Lambda_1m\rho D,
  \quad y(0)=0.
\end{align}
We set
\begin{align*}
  z(t):=-\frac{\Lambda_1m}{\alpha_0}D(t),
  \quad w:=y-z.
\end{align*}
Because $z(0)=0$, subtraction from \eqref{eq:sharp-first-window-ode} yields
\begin{align*}
  w'+\alpha_0\rho w=-z',\quad w(0)=0.
\end{align*}
Young's convolution inequality with the scalar heat kernel and \eqref{eq:sharp-Htheta} give
\begin{align}\label{eq:sharp-ode-remainder}
 \norm{w}_{L^2(0,\tau_d+b)}
 &\le \frac{1}{\alpha_0\rho}
    \norm{z'}_{L^2(0,\tau_d+b)}
  \le \frac{2\Lambda_1m}{\alpha_0^2\rho}\norm{g'}_{L^2(\R)}\\
 &\le C_1m\frac{N^{1-\theta}}{\rho}
  \le C_1mN^{-1-\theta}.\notag
\end{align}
For $t\in\tau_d+J$, both $t-\tau_d$ and $t-\tau_d-h$ lie in $[a-\delta,b]$, because $h\le\delta$. Therefore, both cutoff values equal one and, since $Nh=\pi$,
\begin{align*}
  D(t)=-2N^{-\theta}\sin(N(t-\tau_d)),
   \quad t\in\tau_d+J.
\end{align*}
Because $N\ge2/(b-a)$,
\begin{align*}
 \int_a^b\sin^2(Ns)\,\dd s
 =\frac{b-a}{2}-\frac{\sin(2Nb)-\sin(2Na)}{4N}
 \ge\frac{b-a}{4},
\end{align*}
and therefore
\begin{align}\label{eq:sharp-D-lower}
  \norm{z}_{L^2(\tau_d+J)}
  =\frac{2\Lambda_1m}{\alpha_0}N^{-\theta}
   \left(\int_a^b\sin^2(Ns)\,\dd s\right)^{1/2}
  \ge c_0mN^{-\theta}.
\end{align}
Because $\tau_d+J\subset(0,\tau_d+b]$ and $N\ge2C_1/c_0$, combining \eqref{eq:sharp-ode-remainder} and \eqref{eq:sharp-D-lower} gives
\begin{align*}
  \norm{e}_{L^2(0,\Tend;V)}
  &=\norm{y}_{L^2(0,\Tend)}
  \ge\norm{y}_{L^2(\tau_d+J)}\\
  &\ge c_0mN^{-\theta}-C_1mN^{-1-\theta}
  \ge \frac{c_0}{2}mN^{-\theta}.
\end{align*}
Finally, $W_1(\mu_h,\nu)=mh=m\pi/N$, and hence
\begin{align*}
  mN^{-\theta}
  =\pi^{-\theta}m^{1-\theta}W_1(\mu_h,\nu)^\theta.
\end{align*}
This proves \eqref{eq:solution-level-lower} with $c:=\pi^{-\theta}c_0/2$, which is independent of $m$ and $h$.  If a larger exponent held uniformly under the stated bounds, the last inequality and $W_1(\mu_h,\nu)\to0$ as $h\downarrow0$ would contradict it.
\end{proof}

We combine \Cref{thm:solution-sharpness} with the upper bounds of \Cref{sec:rates}.  For $0\le\theta\le1$, $R>0$, and non-negative retarded kernels $\mu,\nu\in\M_+^0([0,\tau])$ of the same mass, set
\begin{align}\label{eq:modulus-def}
  \Delta_\theta(\mu,\nu;R)
  :=\sup\norm{u_\mu-u_\nu}_{L^2(0,\Tend;V)},
\end{align}
where the supremum is taken over all common data $(u_0,f,\psi)$ for which the joined reference trajectory $\tilde u_\nu$ admits an extension $G\in H^\theta(\R;V)$ with $\norm{G}_{H^\theta(\R;V)}\le R$.  The normalisation is through the reference trajectory, not through the data: a bound on $f$ in $L^2(0,\Tend;V^*)$ alone does not control $\norm{G}_{H^\theta(\R;V)}$.

\begin{corollary}[Two-sided modulus for moving delay atoms]
\label[corollary]{cor:two-sided-modulus}
Let $0\le\theta\le1$ and $R>0$, and let $C_\theta$ be the constant of
\Cref{lem:translation}, with $C_0:=2$.
\begin{enumerate}
\item Let $\mu,\nu\in\M_+^0([0,\tau])$ have the same mass $M>0$, and suppose that $\Lambda_1M\le\alpha_0-d_*$ for some $d_*\in(0,\alpha_0)$. Then,
\begin{align}\label{eq:modulus-upper}
  \Delta_\theta(\mu,\nu;R)
  \le\frac{C_\theta\Lambda_1}{d_*}\,
  M^{1-\theta}W_1(\mu,\nu)^\theta R.
\end{align}
\item In the setting of \Cref{thm:solution-sharpness}, there is a constant $c'>0$, independent of $m$, $h$, and $R$, such that, for all $m\in(0,m_0]$, $h\in(0,h_0]$, and $R>0$,
\begin{align}\label{eq:modulus-two-sided}
  c'R\,m^{1-\theta}W_1(\mu_h,\nu)^\theta
  \le\Delta_\theta(\mu_h,\nu;R)
  \le\frac{C_\theta\Lambda_1}{\alpha_0-\Lambda_1m_0}\,
  R\,m^{1-\theta}W_1(\mu_h,\nu)^\theta.
\end{align}
\end{enumerate}
In particular, $\Delta_\theta(\mu_h,\nu;R)$ is comparable with $R\,m\,h^\theta$, uniformly in $m\in(0,m_0]$, $h\in(0,h_0]$, and $R>0$. Part~(1) holds for all equal-mass pairs in the abstract setting, whereas the lower bound in part~(2) concerns the family of moving single atoms in the model problem.
\end{corollary}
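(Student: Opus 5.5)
For part~(1) the plan is to apply \Cref{thm:wasserstein-rate} with the unweighted choice $\beta=0$. Since $\mu\ge0$ has mass $M$, we have $\kappa_\mu(0)=M$. Hence
\begin{align*}
  d_{\mu,0}=\alpha_0-\Lambda_1M\ge d_*>0 .
\end{align*}
For any admissible data whose joined reference trajectory $\tilde u_\nu$ has an extension $G\in H^\theta(\R;V)$ with $\norm{G}_{H^\theta(\R;V)}\le R$, estimate \eqref{eq:wasserstein-rate} with $\beta=0$ gives exactly the right-hand side of \eqref{eq:modulus-upper}. Taking the supremum over such data proves part~(1).

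The case $\theta=0$ needs a separate argument, because \Cref{thm:wasserstein-rate} only covers $0<\theta\le1$. There I would instead use \Cref{thm:transport-modulus} together with the bound \eqref{eq:translation-modulus-bound}, namely $\omega_G\le2\norm{G}_{L^2(\R;V)}$. This yields $\mathfrak C_G(\mu,\nu)\le 2MR$, which matches the convention $C_0=2$ and $W_1^0=1$.

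For the upper bound in part~(2), specialise part~(1) to $M=m\le m_0$ with $d_*:=\alpha_0-\Lambda_1m_0$. This $d_*$ is positive because $m_0<\alpha_0/\Lambda_1$.

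For the lower bound in part~(2), I use \Cref{thm:solution-sharpness} and linear scaling. That theorem provides data $(0,f_h,0)$ whose reference extension $G_{\nu,h}$ (the zero extension) satisfies $\norm{G_{\nu,h}}_{H^\theta}\le C$, and whose solution difference has $L^2(0,\Tend;V)$ norm at least $c\,m^{1-\theta}W_1(\mu_h,\nu)^\theta$. Multiply all data by $\lambda:=R/C$. The problem is linear, so both solutions and the joined trajectories scale by $\lambda$. The scaled data are therefore admissible in the supremum defining $\Delta_\theta(\mu_h,\nu;R)$, since $\lambda G_{\nu,h}$ is an extension of norm at most $R$. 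This gives
\begin{align*}
  \Delta_\theta(\mu_h,\nu;R)\ge \frac{c}{C}\,R\,m^{1-\theta}W_1(\mu_h,\nu)^\theta,
\end{align*}
so we may take $c':=c/C$, which is independent of $m$, $h$ and $R$. The final comparability with $R\,m\,h^\theta$ then follows from $W_1(\mu_h,\nu)=mh$, which gives $m^{1-\theta}W_1^\theta=m h^\theta$.

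The only point needing care is the admissibility bookkeeping. The supremum is over data for which \emph{some} extension has norm at most $R$, and the zero extension $G_{\nu,h}$ qualifies because it is controlled in \eqref{eq:solution-sharpness-uniform}. I also need that the kernels $\mu_h$ and $\nu$ lie in $\M_+^0([0,\tau])$, which the theorem states for $h\le h_0$. Beyond these checks I do not expect a real obstacle.
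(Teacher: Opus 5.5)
Your proposal is correct and follows the paper's proof essentially step for step. The same steps appear there: $\beta=0$ in \Cref{thm:wasserstein-rate} with $d_{\mu,0}=\alpha_0-\Lambda_1M\ge d_*$; the case $\theta=0$ via \Cref{thm:transport-modulus} and $\omega_G\le 2\norm{G}_{L^2(\R;V)}$; the specialisation $d_*:=\alpha_0-\Lambda_1m_0$ for the upper bound; and rescaling the forcing $f_h$ by $R/C$ to obtain the lower bound with $c'=c/C$.
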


\begin{proof}
For part~(1), let $(u_0,f,\psi)$ be admissible common data, and let $G\in H^\theta(\R;V)$ be an extension of $\tilde u_\nu$ with $\norm{G}_{H^\theta(\R;V)}\le R$.  Because $\mu$ is non-negative with mass $M$, $\kappa_\mu(0)=M$, and therefore
\begin{align*}
  d_{\mu,0}=\alpha_0-\Lambda_1M\ge d_*>0.
\end{align*}
If $0<\theta\le1$, \eqref{eq:wasserstein-rate} with $\beta=0$ gives
\begin{align*}
  \norm{u_\mu-u_\nu}_{L^2(0,\Tend;V)}
  \le\frac{C_\theta\Lambda_1}{d_*}M^{1-\theta}W_1(\mu,\nu)^\theta R.
\end{align*}
If $\theta=0$, then \eqref{eq:translation-modulus-bound} and \Cref{lem:transport-cost} give $\mathfrak C_G(\mu,\nu)\le2M\norm{G}_{L^2(\R;V)}\le2MR$, and \eqref{eq:transport-modulus-stability} with $\beta=0$ gives the same bound with $C_0=2$.  Taking the supremum over admissible data proves \eqref{eq:modulus-upper}.

For part~(2), the upper bound follows from part~(1) with $M=m$ and $d_*:=\alpha_0-\Lambda_1m_0$, because $\Lambda_1m\le\Lambda_1m_0$.  For the lower bound, let $C$ and $c$ be the constants of \Cref{thm:solution-sharpness}, and replace the forcing $f_h$ by $(R/C)f_h$.  By linearity and uniqueness, the reference solution, its zero extension, and the solution difference are multiplied by $R/C$.  The rescaled zero extension therefore has $H^\theta(\R;V)$ norm at most $R$, and \eqref{eq:solution-level-lower} gives
\begin{align*}
  \Delta_\theta(\mu_h,\nu;R)
  \ge\frac{R}{C}\,c\,m^{1-\theta}W_1(\mu_h,\nu)^\theta.
\end{align*}
This proves \eqref{eq:modulus-two-sided} with $c':=c/C$.  The restriction $m_0<\alpha_0/\Lambda_1$ is used only for the upper bound; the proof of \Cref{thm:solution-sharpness} does not use it.
\end{proof}

\begin{remark}[Contrast with delays acting on the state]
\label[remark]{rem:contrast-state-delay}
For $\theta=0$, \eqref{eq:modulus-two-sided} gives $\Delta_0(\mu_h,\nu;R)\ge c'Rm$ for any $h\in(0,h_0]$.  Thus, over reference trajectories bounded in $L^2(\R;V)$, the worst-case solution difference does not tend to zero as the displacement $h$ vanishes.  For delays acting on the state through a bounded operator, the situation is different.  In the setting of~\cite{Shikhman2026b}, where the error is measured in $H$ at a fixed time over bounded sets of continuous $H$-valued histories, the worst case for the Dirichlet Laplacian is comparable with $d\sqrt{\log(e/d)}$, $d$ being the bounded-Lipschitz distance, and logarithmic spatial regularity of the histories restores a Lipschitz modulus.  On $[0,\tau]$, this distance and $W_1$ are equivalent for measures of equal mass, with constants depending only on $\tau$.  The two settings differ in the data class and in the norm of the error, so the comparison is structural rather than quantitative.  When the delayed operator has the same spatial order as the instantaneous one, high spatial frequencies of the residual $\K_{\mu-\nu}\tilde u_\nu$ are not damped relative to the solution; the construction in \Cref{thm:solution-sharpness} exploits exactly this, and the rate is then determined by the temporal regularity of the reference trajectory.
\end{remark}

\begin{remark}[Why the first-delay window matters]
The proof of \Cref{thm:solution-sharpness} uses the interval before the delayed \emph{error} can return to the equation.  In that interval the error satisfies \eqref{eq:sharp-first-window-ode}, and replacing its solution by the quasi-static value $z$ incurs only $O(N^{1-\theta}/\rho)$.  The forcing is uniformly bounded in $L^2(0,\Tend;V^*)$, not necessarily in $L^2(0,\Tend;H)$.
\end{remark}

\section{Concluding remarks}
We have developed a quantitative stability theory for measure-valued delay laws.  The residual principle converts a kernel perturbation into time translations of one fixed joined trajectory.  Transport plans then give a trajectory-dependent modulus estimate, and an $H^\theta(\R;V)$ extension yields a $W_1^\theta$ bound.  The gluing criterion identifies the exact interface condition for fractional regularity.  An explicit differentiated-data argument supplies an $H^1$ extension and hence the Lipschitz rate.

At a fixed positive lag, the estimates quantify concentration of a distributed kernel to a discrete delay, motion of finitely many atoms, and finite atomic quadrature of a distributed law.  The origin is qualitatively different.  The strict inequality $\Lambda_1M<\alpha_0$ gives mass-only unweighted bounds uniformly over all delay locations.  At equality, any fixed scalar mode is spectrally stable, but a simultaneous small-delay and high-frequency limit destroys uniform control of the coercive space--time norm.  Above the threshold, uniformly bounded data generate exponentially growing solutions, with dimensionless logarithmic growth tending to $\log(\Lambda_1M/\alpha_0)$.  Thus, the strict subcritical condition is sharp at the class level.  Independently, for moving delay atoms the supremum of the solution difference over reference trajectories of bounded $H^\theta$ norm is comparable with $m^{1-\theta}W_1^\theta$, so that the $W_1^\theta$ rate is sharp at the solution level in both its exponent and its mass scaling. In contrast with delays acting on the state~\cite{Shikhman2026b}, the worst-case difference over reference trajectories bounded in $L^2(\R;V)$ does not tend to zero with the displacement, and the rate is governed by temporal regularity.

The results are finite-time and rely on coercivity of the instantaneous form.  Extensions to semilinear problems and origin limits after renormalising the instantaneous operator remain natural analytical directions.

The concentration estimate also has a numerical interpretation, as noted in \Cref{rem:numerical-splitting}: a kernel-quadrature contribution can be separated from the spatial and temporal discretisation errors. This separation does not automatically give mesh-uniform estimates; see \Cref{sec:spatial-transfer}.  Deriving verified uniform temporal regularity from semidiscrete data, and combining kernel concentration with spatial consistency and time stepping, are tasks for a numerical sequel.  The singular origin limit shows that constants based only on the kernel mass cannot remain uniform in general at the critical and supercritical thresholds.

\section*{Statements and declarations}

\subsection*{Funding}
The author declares that no funds, grants, or other support were received during the preparation of this manuscript.

\subsection*{Data availability}
No datasets were generated or analysed during the current study.

\subsection*{Competing interests}
The author declares no competing interests.

\subsection*{Use of generative AI}
The research questions, mathematical results, and conclusions are the author's own. The author used ChatGPT (OpenAI) for language editing and assistance with checking selected mathematical derivations and manuscript presentation. The author takes full responsibility for the manuscript.

\end{document}